\documentclass[11pt]{article}

\usepackage{mathrsfs,amssymb,amsmath,amsfonts}
\usepackage{amsthm}
\usepackage{bbm}
\usepackage{tipa}
\usepackage{enumerate}
\usepackage{color}
\usepackage[titletoc,title]{appendix}
\usepackage{cite}
\usepackage[hyphens]{url}
\usepackage[bookmarks=false,hidelinks]{hyperref}

\usepackage{stmaryrd}

\usepackage{arydshln}
\usepackage{indentfirst}
\def\sqr#1#2{{\vcenter{\vbox{\hrule height.#2pt
              \hbox{\vrule width.#2pt height#1pt \kern#1pt \vrule width.#2pt}
              \hrule height.#2pt}}}}

\def\3n{\negthinspace \negthinspace \negthinspace }
\def\2n{\negthinspace \negthinspace }
\def\1n{\negthinspace }

\def\dbE{\mathbb{E}}
\def\dbF{\mathbb{F}}

\def\dbP{\mathbb{P}}
\def\dbQ{\mathbb{Q}}
\def\dbR{\mathbb{R}}
\def\dbS{\mathbb{S}}

\def\sG{\mathscr{G}}

\def\={\buildrel \triangle \over =}

\def\ds{\displaystyle}

\def\ns{\noalign{\ss}}
\def\a{\alpha}
\def\b{\beta}

\def\d{\delta}
\def\e{\varepsilon}

\def\l{\lambda}
\def\m{\mu}

\def\si{\sigma}
\def\t{\tau}
\def\f{\varphi}
\def\th{\theta}

\def\i{\infty}
\def\D{\Delta}
\def\Th{\Theta}

\def\cB{{\cal B}}

\def\cG{{\cal G}}
\def\cH{{\cal H}}

\def\cL{{\cal L}}

\def\cS{{\cal S}}
\def\cT{{\cal T}}
\def\cU{{\cal U}}
\def\cV{{\cal V}}
\def\cW{{\cal W}}

\def\cY{{\cal Y}}
\def\cZ{{\cal Z}}

\def\BY{{\bf Y}}
\def\BZ{{\bf Z}}
\def\no{\noindent}

\def\ss{\smallskip}
\def\ms{\medskip}

\def\q{\quad}
\def\qq{\qquad}

\def\limsup{\mathop{\overline{\rm lim}}}
\def\liminf{\mathop{\underline{\rm lim}}}

\def\esssup{\mathop{\rm esssup}}
\def\essinf{\mathop{\rm essinf}}

\def\wt{\widetilde}

\def\cd{\cdot}

\def\tr{\hbox{\rm tr$\,$}}

\def\les{\leqslant}
\def\ges{\geqslant}

\def\({\Big (}
\def\){\Big )}
\def\[{\Big[}
\def\]{\Big]}
\def\bde{\begin{definition}}
\def\ede{\end{definition}}
\def\be{\begin{equation}}
\def\bel{\begin{equation}\label}
\def\ee{\end{equation}}
\def\bt{\begin{theorem}}
\def\et{\end{theorem}}
\def\bc{\begin{corollary}}
\def\ec{\end{corollary}}
\def\bl{\begin{lemma}}
\def\el{\end{lemma}}
\def\bp{\begin{proposition}}
\def\ep{\end{proposition}}
\def\bas{\begin{assumption}}
\def\eas{\end{assumption}}
\def\br{\begin{remark}}
\def\er{\end{remark}}
\def\ba{\begin{array}}
\def\ea{\end{array}}
\def\ed{\end{document}}

\def\square#1{\vbox{\hrule\hbox{\vrule height#1%
     \kern#1\vrule}\hrule}}
\def\rectangle#1#2{\vbox{\hrule\hbox{\vrule height#1%
     \kern#2\vrule}\hrule}}

\font\tenbb=msbm10 \font\sevenbb=msbm7 \font\fivebb=msbm5

\newfam\bbfam
\scriptscriptfont\bbfam=\fivebb \textfont\bbfam=\tenbb
\scriptfont\bbfam=\sevenbb

\def\md{{\mathrm d}}

\newtheorem{assumption}{Assumption}

\newtheorem{theorem}{Theorem}[section]
\newtheorem{lemma}[theorem]{Lemma}
\newtheorem{definition}[theorem]{Definition}
\newtheorem{proposition}[theorem]{Proposition}
\newtheorem{corollary}[theorem]{Corollary}
\newtheorem{remark}[theorem]{Remark}

\theoremstyle{definition}

\newenvironment{keywords}{{\bf Key words: }}{}

\numberwithin{equation}{section}

\newcommand{\udots}{\mathinner{\mskip1mu\raise1pt\vbox{\kern7pt\hbox{.}}
\mskip2mu\raise4pt\hbox{.}\mskip2mu\raise7pt\hbox{.}\mskip1mu}}

\begin{document}
\title{Stochastic Representations of Stationary HJBI-Type Variational Inequalities with Bilateral Constraints\footnote{This work is supported by the National Key R\&D Program of China (No. 2023YFA1009002),  the
National Natural Science Foundation of China (No. 12371443) and the Changbai
Talent Program of Jilin Province.}}

\author{Sheng Huang\footnote{School of Mathematics and Statistics, Northeast Normal University, Changchun 130024, P. R. China; {E-mail: huangsheng9910@163.com}}\qq
Qingmeng Wei\footnote{Corresponding author. School of Mathematics and Statistics, Northeast Normal University, Changchun 130024, P. R. China; E-mail:  weiqm100@nenu.edu.cn}}

\maketitle

\begin{abstract}
In this paper, we study probabilistic representations for stationary HJBI-type
variational inequalities with bilateral constraints. We provide two
complementary stochastic representations.
The first representation is obtained through an augmented infinite-horizon
two-player zero-sum stochastic differential game (SDG). By enlarging the
control spaces with two additional stopping symbols, the obstacle terms are
incorporated into the running payoff. Using the framework of infinite-horizon
stochastic recursive differential games, we show that the resulting lower and
upper value functions are the unique bounded viscosity solutions of the
corresponding HJBI variational inequalities.
The second representation is given by a two-player zero-sum mixed
control--stopping SDG. In this formulation, each player chooses both a
continuous control and a stopping decision, and the payoff is defined by a
BSDE with a random terminal time. To make the stopping component compatible
with the Elliott--Kalton strategy framework, we introduce nonanticipative
stopping strategies depending on the opponent's control process. The proof is
based on penalized infinite-horizon SDGs coupled with their own value
functions, together with dynamic programming arguments and stability estimates
for backward semigroups. We prove that the value functions of the mixed
control--stopping game coincide with the unique bounded viscosity solutions of
the bilateral HJBI variational inequalities.
\end{abstract}

\begin{keywords}
HJBI variational inequality; bilateral constraints; infinite-horizon stochastic
differential game; mixed control--stopping game;
nonanticipative stopping strategy; viscosity solution.
\end{keywords}

 {\bf AMS subject classification:} 91A15,  60H10, 49L25.

\section{Introduction}
The Isaacs equations originate from the theory of differential games initiated
by Isaacs \cite{Isaacs-1965}.
Since value functions arising from optimal
control and differential games are in general not smooth, the corresponding
Bellman and Isaacs equations cannot usually be understood in the classical
sense. Viscosity solution theory, introduced by Crandall and Lions
\cite{Crandall-Lions-1983} and further developed for second-order fully
nonlinear equations by Crandall et al. \cite{Crandall-Ishii-Lions-1992},
provides a central weak-solution framework for studying these equations. For
deterministic differential games, viscosity solution approaches were developed
in \cite{Evans-Souganidis-1984, Lions-Souganidis-1985}. For two-player zero-sum
stochastic differential games (SDGs), Fleming and Souganidis
\cite{Fleming-Souganidis-1989} characterized the value functions as viscosity
solutions of second-order Isaacs equations.

Since the seminal work of Pardoux and Peng \cite{Pardoux-Peng-1990}, the theory of
backward stochastic differential equations (BSDEs) has been extensively
developed and has become a powerful probabilistic tool for studying nonlinear
PDEs arising from stochastic control
\cite{Peng-1991,Pardoux-Peng-1992, Peng-1997}. This BSDE-based approach was subsequently extended to SDGs by
Buckdahn and Li \cite{Buckdahn-Li-2008}, who studied two-player zero-sum SDGs
with recursive payoff functionals generated by controlled BSDEs and showed that
the   value functions are viscosity solutions of the corresponding
HJBI equations. BSDEs with random terminal times and their connections with elliptic and
parabolic PDEs have also been studied; see, for instance,
\cite{Darling-Pardoux-1997,Pardoux-1998,Royer-2004}.
Related infinite horizon recursive control and game problems have been
investigated as well; see, e.g.,
\cite{Buckdahn-Li-Zhao-2021,Li-Zhao-2019,Luo-Li-Wei-2025,Huang-Wei-2026}.

    Another closely related direction concerns HJBI variational inequalities, which
arise naturally when stopping decisions or obstacle constraints are incorporated
into   SDGs; see the early works
\cite{Bensoussan-Friedman-1974,Friedman-1975}. Stationary Isaacs-type
variational inequalities arising from mixed-type games with controls and
stopping times were subsequently studied in
\cite{Ghosh-Rao-2003,Ghosh-Rao-Sheetal-2009,Ghosh-Rao-2012}, where the
corresponding value functions were characterized as viscosity solutions of
HJI/HJBI variational inequalities.
The development of BSDE theory has also contributed to this line of research.
In particular, reflected BSDEs provide a probabilistic framework for obstacle
problems and stopping games; see, e.g.,
\cite{Hamadene-2006,ElKaroui-Kapoudjian-Pardoux-Peng-Quenez-1997,
Cvitanic-Karatzas-1996,Hamadene-Lepeltier-2000,
Hamadene-Lepeltier-Wu-1999}. Closely related to HJBI variational
inequalities, finite horizon zero-sum SDGs with payoff functionals generated by
reflected BSDEs have been studied in
\cite{Buckdahn-Li-2009,Buckdahn-Li-2011}, where the lower and upper value
functions were characterized as viscosity solutions of obstacle-type HJBI
variational inequalities, including both single-obstacle and double-obstacle
cases.
 By comparison, the BSDE-based treatment of stationary fully nonlinear
obstacle-type HJBI variational inequalities remains less developed.
This
motivates us to focus on the following lower and upper HJBI variational
inequalities:
\bel{VI-}
\min\big\{
\max\big\{
\lambda W(x)-H^-\big(x, \lambda W(x), DW(x),D^2W(x)\big),
W(x)-\psi_1(x)
\big\},
W(x)-\psi_2(x)
\big\}=0,\  x\in\mathbb R^n,
\ee
and
\bel{VI+}
\min\big\{
\max\big\{
\lambda W(x)-H^+\big(x,\lambda W(x),DW(x),D^2W(x)\big),
W(x)-\psi_1(x)
\big\},
W(x)-\psi_2(x)
\big\}=0,\  x\in\mathbb R^n,
\ee
where $\lambda$ is a positive constant, $\psi_2\les\psi_1$, and $H^-$ and $H^+$ denote the lower and
upper Hamiltonians, respectively, given by
\begin{equation}\begin{aligned}
H^{-}(x , y, p,X)
&:=
\sup_{u\in U}\inf_{v\in V}
\[
\frac12\operatorname{tr}\big(\sigma\sigma^\top(x,u,v)X\big)
+b(x,u,v)\cdot p
+f\big(x,y, p\sigma(x,u,v),u,v\big)
\],
\\
H^{+}(x  ,y, p,X)
&:=
\inf_{v\in V}\sup_{u\in U}
\[
\frac12\operatorname{tr}\big(\sigma\sigma^\top(x,u,v)X\big)
+b(x,u,v)\cdot p
+f\big(x,y, p\sigma(x,u,v),u,v\big)
\],
\end{aligned} \end{equation}
for $(x,y, p,X)\in\mathbb R^n\times\mathbb R\times\mathbb R^n\times\mathcal S^n$. Here
$\mathcal S^n$ denotes the set of all $n\times n$ symmetric matrices, and
$U\subset\mathbb R^{m_1}$ and $V\subset\mathbb R^{m_2}$ are compact.
The bilateral obstacle structure imposes the constraint
$\psi_2(x)\les W(x)\les \psi_1(x)$, $x\in\mathbb R^n$. In the continuation
region
$\{x\in\mathbb R^n:\psi_2(x)<W(x)<\psi_1(x)\}$, a solution $W$ satisfies
the corresponding HJBI equation, while on the contact regions
$\{x\in\mathbb R^n:W=\psi_1\}$ and $\{x\in\mathbb R^n:W=\psi_2\}$, it coincides with the upper and lower
obstacles, respectively.

  The purpose of this paper is to provide stochastic representations for the
viscosity solutions of the above stationary HJBI variational inequalities \eqref{VI-} and \eqref{VI+}. We
establish two complementary representations, both formulated on an infinite
horizon and based on BSDE-generated recursive payoffs.
 The first representation is obtained through an augmented recursive zero-sum
SDG. The basic idea is to enlarge the control spaces by adding two stopping
symbols corresponding to the two obstacles. Motivated by the enlarged-control
formulation for one-sided obstacle problems in the HJB setting
\cite{Bardi-Capuzzo-Dolcetta-1997}, we develop a two-player
bilateral-obstacle version in the recursive SDG framework with BSDE-generated
payoffs. In the augmented formulation, the obstacle values are encoded in the augmented
BSDE driver. The resulting infinite horizon recursive SDG falls within the
recursive game framework studied in
\cite{Buckdahn-Li-Zhao-2021, Huang-Wei-2026} and leads to HJBI equations without
explicit obstacle constraints. We then prove the equivalence between these HJBI
equations and the original obstacle-type HJBI variational inequalities.  Consequently, the lower
and upper value functions of the augmented  infinite horizon recursive SDG provide stochastic
representations for the bounded viscosity solutions of the original
variational inequalities.

The second representation is obtained through a mixed control-stopping SDG. In
this formulation, the players choose both  controls and stopping
decisions, and the payoff is defined by a BSDE with a random terminal time. If
the game is stopped, the terminal payoff is determined by the player who stops
first, leading naturally to the two obstacles. To connect this mixed game with
the HJBI variational inequalities, we introduce a family of penalized
infinite horizon recursive SDGs whose drivers are coupled with their own value
functions. The corresponding penalized value functions are characterized as
unique bounded viscosity solutions of stationary HJBI equations with penalized
drivers, and the penalization terms force the solutions to stay between the two
obstacles in the limit. By proving the local uniform convergence of the
penalized value functions and combining it with stability estimates for BSDEs
with random terminal times, we obtain a dynamic programming representation for
the limiting value function and identify it with the value function of the
mixed control-stopping game.

Compared with the mixed control-stopping problems and games studied in
\cite{Kamizono-Morimoto-2002,Morimoto-2003,Ghosh-Rao-2003,
Ghosh-Rao-Sheetal-2009,Ghosh-Rao-2012,Akdim-Ouknine-Turpin-2006,Hu-2020}, the
present formulation is not merely a recursive extension obtained by replacing
the standard payoff with a BSDE-generated one. A further essential feature is the
introduction of nonanticipative stopping strategies depending on the opponent's
control process. This issue does not seem to be treated as a structural part of
the strategy formulation in the above mixed-type game literature.
Such control-dependent stopping strategies are indispensable in the
Elliott--Kalton framework. Indeed, to identify the limit of the auxiliary value
functions with the value function of the original mixed control-stopping game,
one has to apply the DPP satisfied by the auxiliary value functions. The
stopping times involved in this argument are typically approximate hitting
times of the obstacle/contact regions, possibly combined with localization exit
times. Since these stopping times are defined through the controlled state
process, they necessarily depend on the opponent's control process through the
corresponding response strategy. Thus the admissible stopping strategies must
allow such nonanticipative dependence; otherwise restriction, pasting, and the
DPP would not be compatible with the strategy class.
 The stopping-strategy formulation introduced here is not an ad hoc device. In a
similar spirit, related stopping-strategy mechanisms appear in
stochastic games with stopping or impulse decisions, and in classical
differential games with optional stopping;  see, for instance,
\cite{Kalton1974,BayraktarHuang2013,Azimzadeh2019}.

 A further possible representation may be developed through infinite horizon
  reflected BSDEs with two barriers. In such an approach, the two obstacles would be imposed
directly on the backward component, and the associated value functions would
provide another probabilistic representation of the same HJBI variational
inequalities. This viewpoint is closely related to the penalization scheme used
in our second approach, since both methods enforce the bilateral obstacle
constraints through backward equations. However, a direct reflected BSDE
formulation requires a separate analysis of well-posedness, stability and
dynamic programming for infinite horizon reflected BSDEs with game features.
We therefore leave this approach for future research.

The rest of the paper is organized as follows. Section~2 introduces the
notation, assumptions, and preliminary results on BSDEs with random terminal
times. Section~3 establishes the first stochastic representation through an
augmented infinite horizon recursive SDG. Section~4 develops the mixed control-stopping SDG formulation. More precisely, it introduces a family of
penalized infinite horizon recursive SDGs, establishes the convergence of the
penalized value functions, proves the dynamic programming principle for the
penalized games, and then derives the second stochastic representation.

  \section{Preliminaries}

In this section, we introduce some notation, assumptions and preliminary definitions used
throughout the paper. We first fix the underlying stochastic basis. Let $(\Omega,\mathcal F,\mathbb F,\mathbb P)$ be a filtered probability space
equipped with a $d$-dimensional standard Brownian motion
$B=\{B_t\}_{t\ges0}$, where $\mathbb F=\{\mathcal F_t\}_{t\ges0}$ is the natural
filtration generated by $B$ and augmented by all $\mathbb P$-null sets.

For $t\ges0$, denote by $\mathcal S_t$ the collection of all
$\mathbb F$-stopping times $\tau$ satisfying
$t\les\tau\les\infty$, $\mathbb P$-a.s. In particular, we write
$\mathcal S:=\mathcal S_0$. Given $t\ges0$ and
$\tau\in\mathcal S_t$, we use
$ \ds
\llbracket t,\tau\rrbracket
:=
\big\{(s,\omega)\in[0,\infty)\times\Omega:\,
t\les s\les\tau(\omega)\big\}
$
to denote the corresponding stochastic interval. We shall work with the
following spaces.

\begin{itemize}
\item $L^1(t,\infty;\dbR)$ denotes the space of all Borel measurable functions
$\varphi:[t,\infty)\to\dbR$ such that
$
\lVert\f\rVert_{L^1}
:=
\int_t^\infty |\varphi(s)|\,\md s
<\infty .
$

\item $L^\infty_{\mathcal F_\tau}(\dbR)$ denotes the space of all
$\mathcal F_\tau$-measurable $\dbR$-valued random variables $\xi$  such that
$  \ds
\lVert\xi\rVert_\infty
:=
\esssup_{\omega\in\Omega}|\xi(\omega)|
<\infty .
$

\item $L^\infty_{\mathbb F}(t,\tau;\dbR)$ denotes the space of all
$\mathbb F$-progressively measurable $\dbR$-valued processes
$\varphi=\{\varphi_s\}_{s\ges t}$ such that
$\ds
\esssup_{(s,\omega)\in\llbracket t,\tau\rrbracket}
|\varphi_s(\omega)|<\infty .
$
Here  the essential supremum is taken with respect to $\md s\times \md\mathbb P$
on $\llbracket t,\tau\rrbracket$.

\item $\mathcal H^2_{\rm loc}(t,\tau;\dbR^d)$ denotes the space of all
$\mathbb F$-progressively measurable $\dbR^d$-valued processes
$\varphi=\{\varphi_s\}_{s\ges t}$ such that, for every $T>t$,
$
\mathbb E\int_t^{T\wedge\tau}|\varphi_s|^2\,\md s<\infty .
$

\item If $D$ is a subset of a Euclidean space, then $C_b(D)$,
$\mathrm{LSC}(D)$ and $\mathrm{USC}(D)$ stand for the spaces of bounded
continuous, lower semicontinuous and upper semicontinuous real-valued functions
on $D$, respectively.
\end{itemize}

For later use, for any probability measure $\mathbb Q$ and any $s\ges0$, we
write
$ \ds
\mathbb E_s^{\mathbb Q}[\,\cdot\,]
:=
\mathbb E^{\mathbb Q}[\, \cdot\mid\mathcal F_s].
$

\subsection{FBSDEs with random terminal times }

 We recall in this subsection some   well-posedness and stability
estimates for decoupled FBSDEs with random terminal times.
For any $(t,x)\in[0,\infty)\times\mathbb R^n$, $\tau\in\mathcal S_t$, and $\l>0$,
consider the forward SDE,
\begin{equation}\label{A.1-tau}
\left\{\2n
\begin{aligned}
&\md X_s^{t,x}  = b(X_s^{t,x})\,\md s+\sigma(X_s^{t,x})\,\md B_s,\qquad s\ges t,\\
&X_t = x,
\end{aligned}
\right.
\end{equation}
and  the BSDE
\begin{equation}
\label{A.2-tau-g}
\left\{\2n
\begin{aligned}
&Y_{s\wedge\tau}^{t,x}
 =
Y_{T\wedge\tau}^{t,x}
+
\int_{s\wedge\tau}^{T\wedge\tau}
\big(g(X_r^{t,x},\l Y_r^{t,x},Z_r^{t,x})-\l Y_r^{t,x}\big)\,\md r
-
\int_{s\wedge\tau}^{T\wedge\tau} Z_r^{t,x}\,\md B_r,
\ t\les s\les T<\infty,
\\
&Y_{\tau} ^{t,x}=h(X_\tau^{t,x}) ,\quad \text{on }\{\tau<\infty\}.
\end{aligned}
\right.
\end{equation}
On
$\{\tau=\infty\}$, we make the convention that the discounted terminal
 $
e^{-\lambda(\tau-s)}
h(X_\tau^{t,x})\mathbf 1_{\{\tau<\infty\}}
$
is zero, for every finite time $s\ges t$. Thus no value of
$X_\infty^{t,x}$ is involved.

\ss
\noindent
\textbf{Convention.}
In what follows, whenever a stopping time $\t$ appears as the terminal time of a BSDE and may take the value $\infty$, the corresponding discounted terminal term is understood to be zero on $\{\t=\infty\}$. This convention
will be used throughout the paper.

\ss
We assume that the coefficients $b: \dbR^n\to\dbR^n$, $\si:   \dbR^n\to\dbR^{n\times d},
 $ $g: \dbR^n\times \dbR\times\dbR^d\to\dbR$, and $h:  \dbR^n\to\dbR$ satisfy the following conditions.
\begin{itemize}
\item[\bf(A1)]
$b
$ and $
\sigma
$
are bounded  and Lipschitz continuous in $x\in\dbR^n$.

\item[\bf(A2)]
There exist constants $B_g,L_x,L_y,L_z\ges 0$ such that, for all  $x, x_1,x_2\in \mathbb{R}^n$, $y_1,y_2\in \mathbb{R}$, and $z_1,z_2\in \mathbb{R}^d$,
$$
|g(x_1,y_1,z_1)-g(x_2,y_2,z_2)|
\les
L_x|x_1-x_2|+L_y|y_1-y_2|+L_z|z_1-z_2|,
$$
and $|g(x,0,0)|\les B_g$.  Moreover, for all   $x\in \mathbb{R}^n$, $y_1,y_2\in \mathbb{R}$, and $z\in \mathbb{R}^d$,
$$
\bigl(g(x,y_1,z)-g(x,y_2,z)\bigr)(y_1-y_2)\les 0 .
$$

  \item  [\bf(A3)] There exist constants $B_h\ges 0$ and $L_h\ges0$    such that, for all  $x,x_1,x_2\in\mathbb R^n,$
$
|h(x)|\les B_h$, and $
|h(x_1)-h(x_2)|\les L_h|x_1-x_2|.
$

\item[{\bf(A4)}]  There exists a constant $\mu >-\l$ such that, for all $t\ges 0$ and $x,\bar x\in\mathbb R^n$,
$$
 2\langle x-\bar x,\; b(x)-b(\bar x)\rangle
+  |\sigma(x)-\sigma(\bar x) |^2+ 2L_z|\si(x)-\si(\bar x)|\cd|x-\bar x|
\les -\mu |x-\bar x|^2.
$$

\end{itemize}

 Under {\bf(A1)}, the forward SDE \eqref{A.1-tau} admits a unique strong solution.
For the BSDE \eqref{A.2-tau-g}, we shall use the following well-posedness and
stability estimates; see  \cite[Proposition 2.1, Lemmas 2.4 and 2.5 ]{Huang-Wei-2026}.

 \begin{lemma}\label{LemmaA-1-FBSDE}\sl
Under {\rm {\bf(A1)}--{\bf(A3)}}, for any $(t,x)\in [0,\infty)\times \mathbb{R}^n$  and any $\tau\in\mathcal S_t$, there exists a unique pair
$
(Y^{t,x},Z^{t,x})\in L^\infty_{\mathbb{F}}(t,\t;\mathbb{R})\times \cH^2_{\mathrm{loc}}(t,\t;\mathbb{R}^d)
$
solving \eqref{A.2-tau-g}. Moreover, for all $s\ges t$,
$\ds
 |Y_{s\wedge\tau}^{t,x}|
 \les B_h+\frac {B_g}\l,
$ $\dbP\mbox{-a.s.}
$
Moreover,  if
   {\bf(A4)} also holds, for any   $t\ges 0$,
  for all $x,\bar x\in \mathbb{R}^n$,
$$
|Y_t^{t,x}-Y_t^{t,\bar x}|
\les
\Big(
L_h^2+\frac{L_x^2}{ \l(\l+\mu)}
\Big)^{1/2}
|x-\bar x|, \qquad \mathbb P\text{-a.s.}
$$
In particular, if $h\equiv 0$, then
$\ds
|Y_t^{t,x}-Y_t^{t,\bar x}|
\les
 \frac{L_x}{\sqrt{ \l(\l+\mu)}}|x-\bar x|,$ $ \mathbb P\text{-a.s.}  $
\end{lemma}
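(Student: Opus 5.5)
We would prove Lemma~\ref{LemmaA-1-FBSDE} by a truncation/limit argument for existence, a discounted Itô estimate for the bound and uniqueness, and a coupled forward--backward Itô computation for the Lipschitz estimate.

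\emph{Existence, uniqueness and the bound.} Set $\tilde g(x,y,z):=g(x,\lambda y,z)-\lambda y$. By \textbf{(A2)}, $\tilde g$ is strictly monotone in $y$:
\[
(\tilde g(x,y_1,z)-\tilde g(x,y_2,z))(y_1-y_2)\les-\lambda|y_1-y_2|^2 .
\]
For each $N$, we solve the finite-horizon BSDE on $[t,\tau\wedge N]$ with terminal value $h(X_\tau)\mathbf 1_{\{\tau\les N\}}$, using standard Lipschitz BSDE theory.

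To get the uniform bound, we apply Itô to $e^{2\lambda s}|Y^N_s|^2$, or alternatively linearize $g$ in $(y,z)$ via a Girsanov change of measure whose density is bounded by $L_z$. The linearization gives
\[
Y^N_s=\mathbb E^{\mathbb Q}_s\Big[e^{-\int_s^{\tau\wedge N}(\lambda-\lambda\beta_r)dr}\,\xi+\int_s^{\tau\wedge N}e^{-\int_s^r(\lambda-\lambda\beta_u)du}g(X_r,0,0)\,dr\Big],
\]
where $\beta\les0$ by monotonicity. It follows that $|Y^N|\les B_h+B_g/\lambda$.

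The same representation applied to $Y^N-Y^M$ shows the difference is bounded by $Ce^{-\lambda(N\wedge M-s)}$, so $(Y^N)$ is Cauchy uniformly on compacts. Itô on $|Y^N-Y^M|^2$ then gives convergence of $Z^N$ in $\mathcal H^2_{\rm loc}$. Passing to the limit yields a solution satisfying the bound. For uniqueness, the difference of two bounded solutions satisfies $|\delta Y_s|\les Ce^{-\lambda(T-s)}$ for every $T$; letting $T\to\infty$ gives $\delta Y=0$, and then $\delta Z=0$.

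\emph{Lipschitz estimate.} Write $\hat X=X^{t,x}-X^{t,\bar x}$, $\hat Y=Y^{t,x}-Y^{t,\bar x}$, $\hat Z=Z^{t,x}-Z^{t,\bar x}$. We apply Itô to $e^{2\lambda s}|\hat Y_s|^2$ on $[t,\tau\wedge T]$. Monotonicity absorbs the $\lambda\hat Y$ terms, and Young's inequality handles the cross terms:
\[
2|\hat Y|(L_x|\hat X|+L_z|\hat Z|)\les \lambda|\hat Y|^2+\frac{L_x^2}{\lambda}|\hat X|^2+\cdots .
\]
The delicate point is that the $L_z|\hat Z|$ term cannot simply be absorbed by $|\hat Z|^2$ without losing the sharp constant. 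Instead we plan a Girsanov change of measure: set $\mathbb Q$ by $dW=dB-\gamma_r dr$, with $|\gamma|\les L_z$ the linearization coefficient in $z$. Under $\mathbb Q$ the forward difference $\hat X$ acquires the drift $\sigma\gamma$. This is exactly why \textbf{(A4)} contains the term $2L_z|\sigma(x)-\sigma(\bar x)||x-\bar x|$: Itô under $\mathbb Q$ gives $\mathbb E^{\mathbb Q}|\hat X_r|^2\les e^{-\mu(r-t)}|x-\bar x|^2$.

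Under $\mathbb Q$, the linearized representation gives
\[
|\hat Y_t|\les \mathbb E^{\mathbb Q}\Big[e^{-\lambda(\tau-t)}L_h|\hat X_\tau|\mathbf 1_{\{\tau<\infty\}}+\int_t^{\tau}e^{-\lambda(r-t)}L_x|\hat X_r|\,dr\Big].
\]
Squaring and using Cauchy--Schwarz with the weights $e^{-\lambda(r-t)}$ gives the bound $L_h^2+L_x^2/(\lambda(\lambda+\mu))$. The idea is to split the mass as $\mathbb E^{\mathbb Q}[e^{-\lambda(\tau-t)}]+\int\lambda e^{-\lambda(r-t)}\mathbb P(\tau>r)\,dr=1$, which is a convex combination. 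For the stopped term we also need the supermartingale property of $e^{\mu s}|\hat X_s|^2$ at the stopping time $\tau$.

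This sharp-constant bookkeeping is the main obstacle. The first thing to try is the convexity (Jensen) argument for the combined weight measure, followed by bounding $\int_t^\infty \lambda e^{-\lambda(r-t)}\cdot\frac{L_x^2}{\lambda^2}e^{-\mu(r-t)}\,dr$ and the terminal part by $L_h^2 e^{-(\lambda+\mu)\cdots}$. The case $h\equiv0$ follows by setting $L_h=0$.
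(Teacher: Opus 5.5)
Your proposal is correct and follows essentially the argument the paper relies on. The paper quotes this lemma from Huang--Wei, and Remark~\ref{rem:measure-representation} shows that proof rests on the same linearization in $z$ with a Girsanov kernel bounded by $L_z$; this is exactly why \textbf{(A4)} carries the term $2L_z|\sigma(x)-\sigma(\bar x)|\,|x-\bar x|$, which makes $e^{\mu s}|\hat X_s|^2$ a $\mathbb Q$-supermartingale. The step you flag as the main obstacle is not one: Jensen's inequality for the unit-mass weight $e^{-\lambda(\tau-t)}\mathbf 1_{\{\tau<\infty\}}\,\delta_\tau+\lambda e^{-\lambda(r-t)}\mathbf 1_{\{r<\tau\}}\,dr$, combined with that supermartingale bound and $\lambda+\mu>0$, gives exactly $L_h^2+L_x^2/(\lambda(\lambda+\mu))$. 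Since $\mathbb Q$ is only locally equivalent, carry out this step at $\tau\wedge T$ with conditional expectations $\mathbb E^{\mathbb Q}_t$, then let $T\to\infty$.
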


\begin{lemma} \label{Pro-A.3}\sl  Assume   {\bf(A1)}-{\bf (A3)} hold.  For $i=1,2$, let  $(Y^i,Z^i)\in L^\infty_{\mathbb{F}}(t,\t;\mathbb{R})\times \cH^2_{\mathrm{loc}}(t,\t;\mathbb{R}^d)$   be the solutions of BSDE \eqref{A.2-tau-g} associated with $(g_i,h_i) $,  respectively, where $ g_i,h_i  $ satisfy {\bf (A2)} and {\bf (A3)} with the same constants.
Then  there exists a probability measure $\mathbb Q$, locally equivalent to
$\mathbb P$, such that, for all $s\ges t$,
$$\ba{ll}
\ns\ds
|Y_{s\wedge\tau}^1-Y_{s\wedge\tau}^2|
\les
 \mathbb E_{s\wedge\tau}^{\mathbb Q}
\[
 e^{-\lambda(\tau-s\wedge\tau)}
|h_1(X_\t^{t,x})-h_2(X_\t^{t,x})| \mathbf 1_{\{\tau<\infty\}}\\
\ns\ds \qq\qq\qq \qq\q
+
\int_{s\wedge\tau}^{\tau}
 e^{-\lambda(r-s\wedge\tau)} |g_1(X_r^{t,x},\l Y_r^2,Z_r^2)
-
g_2(X_r^{t,x},\l Y_r^2,Z_r^2)| \,\md r
\].
\ea$$

\end{lemma}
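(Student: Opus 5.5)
Set $\Delta Y:=Y^1-Y^2$ and $\Delta Z:=Z^1-Z^2$ on $\llbracket t,\tau\rrbracket$. The plan is to linearize the difference of the two BSDEs with respect to $(y,z)$ and then remove the $z$-term by a Girsanov change of measure, giving an explicit representation of $\Delta Y$.

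\emph{Linearization.} Write
$$
g_1(X_r,\lambda Y^1_r,Z^1_r)-g_2(X_r,\lambda Y^2_r,Z^2_r)=a_r\,\lambda\Delta Y_r+\beta_r\cdot\Delta Z_r+\delta_r ,
$$
where $\delta_r:=g_1(X_r,\lambda Y^2_r,Z^2_r)-g_2(X_r,\lambda Y^2_r,Z^2_r)$. The coefficient $a_r$ is the difference quotient of $g_1$ in $y$, taken at the point $(X_r,\cdot,Z^1_r)$ and set to $0$ when $\Delta Y_r=0$. The vector $\beta_r$ is the difference quotient of $g_1$ in $z$, taken at $(X_r,\lambda Y^2_r,\cdot)$.

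By \textbf{(A2)}, $|a_r|\les L_y$ and $a_r\les 0$, the latter from the monotonicity condition. Also $|\beta_r|\les L_z$, and both processes are progressively measurable.

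With this notation $\Delta Y$ solves a linear BSDE on $\llbracket t,\tau\rrbracket$:
$$
\md\Delta Y_r=-\big[(\lambda a_r-\lambda)\Delta Y_r+\beta_r\cdot\Delta Z_r+\delta_r\big]\md r+\Delta Z_r\,\md B_r ,
$$
with terminal value $h_1(X_\tau)-h_2(X_\tau)$ on $\{\tau<\infty\}$.

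\emph{Change of measure.} Since $\beta$ is bounded, the Novikov condition holds on every finite interval. Hence $\md\mathbb Q/\md\mathbb P|_{\mathcal F_T}=\mathcal E(\int\beta\,\md B)_T$ defines a consistent family of measures, giving a $\mathbb Q$ that is locally equivalent to $\mathbb P$. Under $\mathbb Q$ the process $B^{\mathbb Q}=B-\int\beta\,\md r$ is a Brownian motion, and this eliminates the $\beta\cdot\Delta Z$ term.

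Put $\Gamma_r:=\exp\big(\int_{s\wedge\tau}^r(\lambda a_u-\lambda)\md u\big)$. Because $a\les0$, we have $0<\Gamma_r\les e^{-\lambda(r-s\wedge\tau)}$.

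Itô's formula applied to $\Gamma_r\Delta Y_r$ between $s\wedge\tau$ and $T\wedge\tau$ gives
$$
\Delta Y_{s\wedge\tau}=\mathbb E^{\mathbb Q}_{s\wedge\tau}\Big[\Gamma_{T\wedge\tau}\Delta Y_{T\wedge\tau}+\int_{s\wedge\tau}^{T\wedge\tau}\Gamma_r\delta_r\,\md r\Big].
$$
To justify taking $\mathbb Q$-conditional expectation, I would localize the stochastic integral by stopping times so that it is a true $\mathbb Q$-martingale, then remove the localization using boundedness of all the other terms.

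\emph{Passage to the limit $T\to\infty$.} This is the delicate step, since $\mathbb Q$ is only locally equivalent to $\mathbb P$. I would argue separately on the two pieces of $\Gamma_{T\wedge\tau}\Delta Y_{T\wedge\tau}$.

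On $\{\tau\les T\}$ this term equals $\Gamma_\tau(h_1-h_2)(X_\tau)$.

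On $\{\tau>T\}$ it is bounded by $e^{-\lambda(T-s)}\cdot 2(B_h+B_g/\lambda)$, using Lemma~\ref{LemmaA-1-FBSDE}, and this tends to $0$ uniformly in $\omega$. Uniform convergence holds regardless of the measure, so the term vanishes in the limit.

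The integral term is dominated by $\int e^{-\lambda(r-s)}|\delta_r|\,\md r$, and $|\delta_r|$ is bounded because $Y^2$ is bounded and $g_i(x,0,0)$ is bounded. However, $Z^2$ is only in $\mathcal H^2_{\rm loc}$, so an a.s. bound on $\delta_r$ is not available pointwise. I would therefore not bound $\delta$ directly. Instead I would keep $|\delta_r|$ inside the integral and apply monotone convergence to the positive integrand under $\mathbb E^{\mathbb Q}_{s\wedge\tau}$.

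Concretely: take absolute values inside first, using $|\Gamma|\les e^{-\lambda(\cdot)}$, and pass $T\to\infty$ by monotone convergence for the integral term and by the uniform convergence above for the terminal term. This yields exactly the stated inequality, with $\mathbf 1_{\{\tau<\infty\}}$ arising from the convention at $\tau=\infty$.
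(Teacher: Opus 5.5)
Your proposal is correct and follows essentially the same route as the paper, which defers the proof to its cited source: your Girsanov kernel $\beta_r$, the $z$-difference quotient of $g_1$ at $(X_r,\lambda Y^2_r,\cdot)$, is exactly the $\zeta_r$ of Remark~\ref{rem:measure-representation}(i), and the monotonicity in $y$ gives the discount bound $\Gamma_r\les e^{-\lambda(r-s\wedge\tau)}$ before you let $T\to\infty$. The only imprecision is in removing the localization, where $\delta_r$ is not bounded (it grows linearly in $|Z^2_r|$); it is nevertheless $\mathbb Q$-integrable on finite intervals, since $Z^2\in\mathcal H^2_{\rm loc}$ and the density has a finite second moment, or alternatively you can also localize in $\int|\delta_r|\,\md r$ and use monotone convergence after taking absolute values.
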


The following is the stability result of \eqref{A.2-tau-g} with respect to terminal times.

\begin{lemma}
\label{Cor-terminal-time-stability}\sl
Assume that {\bf(A1)}--{\bf(A3)} hold. Let
$\tau_1,\tau_2\in\mathcal S_t$ satisfy
$
\tau_1\les \tau_2, $ $  \mathbb P\text{-a.s.}
$
For $i=1,2$, let $(Y^i,Z^i)$ be the solution of BSDE \eqref{A.2-tau-g}  on
$\llbracket t,\tau_i\rrbracket$ with the same generator $g$ and different terminal
value
$
\xi_i\in L^\infty_{\mathcal F_{\tau_i}}(\mathbb R).
$
Then there exists a probability measure $\bar{\mathbb Q} $, locally equivalent to
$\mathbb P$, such that, for all $ s\ges t $,
$$
|Y^1_{s\wedge\tau_1}-Y^2_{s\wedge\tau_1}|
\les
\mathbb E_{s\wedge\tau_1}^{\bar{\mathbb Q}}
\left[
 e^{-\lambda(\tau_1-s\wedge\tau_1)}
|\xi_1-Y^2_{\tau_1}|\mathbf 1_{\{\tau_1<\infty\}}
\right].
$$

\end{lemma}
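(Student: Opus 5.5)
The plan is to reduce Lemma~\ref{Cor-terminal-time-stability} to Lemma~\ref{Pro-A.3} by observing that on $\llbracket t,\tau_1\rrbracket$ the process $Y^2$ itself solves a BSDE of type \eqref{A.2-tau-g} with terminal time $\tau_1$. Concretely, restricting the equation for $(Y^2,Z^2)$ to $t\les s\les T$ and stopping at $\tau_1\les\tau_2$, we get for all $t\les s\les T<\infty$
$$
Y^2_{s\wedge\tau_1}=Y^2_{T\wedge\tau_1}+\int_{s\wedge\tau_1}^{T\wedge\tau_1}\big(g(X_r,\l Y^2_r,Z^2_r)-\l Y^2_r\big)\,\md r-\int_{s\wedge\tau_1}^{T\wedge\tau_1}Z^2_r\,\md B_r ,
$$
with terminal value $Y^2_{\tau_1}$ on $\{\tau_1<\infty\}$, which is bounded and $\mathcal F_{\tau_1}$-measurable by the bound in Lemma~\ref{LemmaA-1-FBSDE}. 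Hence both $Y^1$ and $Y^2$ solve BSDEs on $\llbracket t,\tau_1\rrbracket$ with the same generator $g$ and terminal values $\xi_1$ and $\xi_2':=Y^2_{\tau_1}$ respectively.

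Next I would run the linearization argument underlying Lemma~\ref{Pro-A.3} directly, since $\xi_i$ here are general random variables rather than $h_i(X_{\tau})$. Set $\delta Y=Y^1-Y^2$, $\delta Z=Z^1-Z^2$, and write $g(X_r,\l Y^1_r,Z^1_r)-g(X_r,\l Y^2_r,Z^2_r)=a_r\,\l\delta Y_r+\beta_r\cdot\delta Z_r$ with $a_r\les 0$ (by the monotonicity in {\bf (A2)}), $|a_r|\les L_y$ and $|\beta_r|\les L_z$, both progressively measurable (standard difference quotients, set to $0$ when the denominator vanishes). Define $\bar{\mathbb Q}$ on each $\mathcal F_T$ via the Girsanov density $\mathcal E(\int\beta\,\md B)_T$; because $\beta$ is bounded this is a true martingale on every finite horizon, so $\bar{\mathbb Q}$ is locally equivalent to $\mathbb P$, and $\tilde B=B-\int\beta\,\md r$ is a $\bar{\mathbb Q}$-Brownian motion. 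Applying Itô's formula to $\Gamma_r\delta Y_r$ with $\Gamma_r=\exp\big(\int_{s\wedge\tau_1}^r(\l a_u-\l)\,\md u\big)$ yields, for $s\les T$,
$$
\delta Y_{s\wedge\tau_1}=\mathbb E^{\bar{\mathbb Q}}_{s\wedge\tau_1}\big[\Gamma_{T\wedge\tau_1}\delta Y_{T\wedge\tau_1}\big],
$$
where the stochastic integral is a true $\bar{\mathbb Q}$-martingale on $[s,T]$ by $Z^i\in\mathcal H^2_{\rm loc}$ together with a localization (and boundedness of $\delta Y$).

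Finally let $T\to\infty$. Since $a\les0$, $|\Gamma_{T\wedge\tau_1}|\les e^{-\l(T\wedge\tau_1-s\wedge\tau_1)}$. On $\{\tau_1<\infty\}$ the term converges to $\Gamma_{\tau_1}(\xi_1-Y^2_{\tau_1})$, bounded by $e^{-\l(\tau_1-s\wedge\tau_1)}|\xi_1-Y^2_{\tau_1}|$; on $\{\tau_1=\infty\}$ it is bounded by $2Ce^{-\l(T-s)}\to0$ using the uniform bounds on $Y^1,Y^2$. I expect this limit to be the main obstacle: $\bar{\mathbb Q}$ is only locally equivalent, so dominated convergence must be applied to conditional expectations under a measure defined consistently on $\mathcal F_\infty$. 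I would handle it by noting that the consistent family of densities defines $\bar{\mathbb Q}$ on $\mathcal F_{\infty-}$ (by Kolmogorov extension on the canonical Wiener space), and that the integrands are uniformly bounded; conditional dominated convergence then gives the claimed inequality after taking absolute values.
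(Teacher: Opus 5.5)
Your proposal is correct and is essentially the argument behind the paper's cited proof. Restricting $(Y^2,Z^2)$ to $\llbracket t,\tau_1\rrbracket$ as a BSDE with terminal value $Y^2_{\tau_1}$, absorbing the $y$-difference into the discount via the monotonicity in {\bf(A2)}, and removing the $z$-difference by Girsanov is exactly how the kernel $\bar\zeta$ of Remark~\ref{rem:measure-representation}(ii) (the $z$-difference quotient evaluated at $\lambda Y^2$) arises. For $T\to\infty$ you do not need dominated convergence under an extension of $\bar{\mathbb Q}$ to $\mathcal F_\infty$, since only an upper bound is required: split $\Gamma_{T\wedge\tau_1}\delta Y_{T\wedge\tau_1}$ on $\{\tau_1\les T\}$ and $\{\tau_1>T\}$, bound the second piece by $2Ce^{-\lambda(T-s)}$ and the first by monotonicity ($\mathbf 1_{\{\tau_1\les T\}}\les\mathbf 1_{\{\tau_1<\infty\}}$ with a nonnegative integrand), then let $T\to\infty$.
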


\begin{remark}\label{rem:measure-representation}\sl
The probability measures appearing in Lemmas~\ref{Pro-A.3} and~\ref{Cor-terminal-time-stability} can be chosen
explicitly as follows (see the proof of \cite[Proposition 2.1]{Huang-Wei-2026}).

\begin{itemize}

\item[(i)]
For the probability measure $\mathbb Q$ in Lemma~\ref{Pro-A.3}, for every $T>t$,
$
\mathbb Q|_{\mathcal F_T}=\mathbb Q^T,
 $
where
$$
\frac{\md \mathbb Q^T}{\md\mathbb P}\Big|_{\mathcal F_T}
=
\exp\Big\{
\int_t^T \zeta_r\,\md B_r
-\frac12\int_t^T |\zeta_r|^2\,\md r
\Big\},
$$
with
$$
\zeta_r:
=
1_{\{r\les\tau\}}
1_{\{Z_r^1-Z_r^2\neq0\}}
\frac{
g_1(X_r^{t,x},\lambda Y_r^2,Z_r^1)
-
g_1(X_r^{t,x},\lambda Y_r^2,Z_r^2)
}{
|Z_r^1-Z_r^2|^2
}
(Z_r^1-Z_r^2).
$$
Moreover,
$
|\zeta_\cd|\les L_z,
$ $\dbP$-a.s.

\item[(ii)]
For the probability measure $\bar{\mathbb Q}$ in Lemma \ref{Cor-terminal-time-stability}, for every
$T>t$,
 $
\bar{\mathbb Q}|_{\mathcal F_T}
=
\bar{\mathbb Q}^{\,T},
 $
where
$$
\frac{\md\bar{\mathbb Q}^{\,T}}{\md\mathbb P}\Big|_{\mathcal F_T}
=
\exp\Big\{
\int_t^T \bar\zeta_r\,\md B_r
-\frac12\int_t^T |\bar\zeta_r|^2\,\md r
\Big\},
$$
with
$$
\bar\zeta_r:
=
1_{\{r\les\tau_1\}}
1_{\{Z_r^1-Z_r^2\neq0\}}
\frac{
g(X_r^{t,x},\lambda Y_r^2,Z_r^1)
-
g(X_r^{t,x},\lambda Y_r^2,Z_r^2)
}{
|Z_r^1-Z_r^2|^2
}
(Z_r^1-Z_r^2).
$$
Moreover,
$
|\bar\zeta_r|\les L_z,
$  $\dbP$-a.s.

\end{itemize}

\end{remark}

The preceding estimates will play an important role in the analysis of the SDG considered below. We now introduce the assumptions and preliminary definitions.

\subsection{Assumptions and preliminary definitions}\label{subsec:Ass}

For the coefficients appearing in the HJBI variational inequalities
\eqref{VI-} and \eqref{VI+}, we assume that
$
b:\mathbb R^n\times U\times V\to\mathbb R^n,$
$
\sigma:\mathbb R^n\times U\times V\to\mathbb R^{n\times d},
$
$
f:\mathbb R^n\times\mathbb R\times\mathbb R^d\times U\times V\to\mathbb R,
$
$
\psi_1,\psi_2:\mathbb R^n\to\mathbb R
$
satisfy the following conditions.
\begin{description}
  \item  {\bf(C1)}   $b$ and $\sigma$ are continuous in
$(u,v) $. Moreover, they are   uniformly bounded, and Lipschitz continuous with respect to $x\in\mathbb R^n$, uniformly in
$(u,v)\in U\times V$.

  \item  {\bf(C2)} The function
$
f
$
is continuous in $(u,v)$, and  there exist constants $l_{fx}, l_{fy}, l_{fz}\ges0$ such that, for all $x_1, x_2\in \dbR^n$, $y_1, y_2\in \dbR$, $z_1, z_2\in\dbR^d$, $u\in U$ and $v\in V$,
      $$|f(x_1, y_1, z_1,u, v) - f(x_2, y_2, z_2, u, v)|\les l_{fx}|x_1-x_2| + l_{fy}|y_1-y_2|+l_{fz} |z_1-z_2| .$$
Moreover, for all   $x\in \mathbb{R}^n$, $y_1,y_2\in \mathbb{R}$, and $z\in \mathbb{R}^d$, $u\in U$ and $v\in V$,
$$
\bigl(f(x, y_1, z,u, v) - f(x,y_2, z, u, v)\bigr)(y_1-y_2)\les 0 .
$$
Finally, there  exists a constant $B_f\ges 0$ such that, for all $x\in \dbR^n$, $y\in \dbR$, $u\in U$ and $v\in V$,
 $|f(x, y, 0,u, v)  |\les B_f.$

 \item  {\bf(C3)} There is some constant $ \mu>-\l$ such that for all $x_1, x_2\in \dbR^n$, $u\in U$, $v\in V$,
$$\ba{ll}
\ns\ds  2\langle x_1-x_2,  b(x_1,u,v)-b(x_2,u, {v})\rangle + | \si(x_1,u,v)-\si(x_2,u, {v})|^2\\
\ns\ds  + 2l_{fz}| \si(x_1,u,v)-\si(x_2,u, {v})|\cd |x_1-x_2|  \les  -\m|x_1-x_2|^2 .
\ea$$

 \item  {\bf(C4)} There exist constants $B_i\ges 0$ and $l_i\ges0$, $i=1,2$, such that for all  $x,x_1,x_2\in\mathbb R^n,$
$
|\psi_i(x)|\les B_i$, and $
|\psi_i(x_1)-\psi_i(x_2)|\les l_i|x_1-x_2|.
$
Moreover,
$
\psi_2(x)\les \psi_1(x),$    for all $x\in\mathbb R^n.
$

\end{description}


We next give the definition of viscosity solutions for the lower HJBI variational inequality \eqref{VI-}. The definition for \eqref{VI+}  is completely
analogous and will therefore be omitted.

  \begin{definition}\label{VI-viscosity solution}\sl
 (i) A function $W\in \mathrm{LSC}(\mathbb R^n)$ is called a viscosity supersolution of  \eqref{VI-}, if for every test function  $\varphi\in C_{l,b}^{3}(\mathbb{R}^n)$ and $x\in\mathbb{R}^n$ at which  $W-\varphi$ attains a local minimum,  the following holds,
$$
\min\Big\{ \max\{ \lambda W (x)
-H^{-}(x, \lambda W (x), D\varphi(x),D^2\varphi(x)),W(x)-\psi_1(x)\},
W(x)-\psi_2(x)\Big\}\ges0.
$$

(ii)  A function $W\in \mathrm{USC}(\mathbb R^n)$ is called a viscosity subsolution of \eqref{VI-}, if for every  function  $\varphi\in C_{l,b}^{3}(\mathbb{R}^n)$ and $x\in\mathbb{R}^n$ at which  $W-\varphi$ attains a local maximum, the following holds,
$$
\min\Big\{ \max\{ \lambda W(x)
-H^{-}(x, \lambda W (x), D\varphi(x),D^2\varphi(x)), W(x)-\psi_1(x)\},W(x)-\psi_2(x)\Big\}\les0.
$$

(iii)  A function $W\in C(\mathbb R^n)$ is called a viscosity solution  of \eqref{VI-}, if it is both a viscosity subsolution  and a viscosity  supersolution, and satisfies $\psi_2(x)\les W(x)\les\psi_1(x)$  for all $x\in\mathbb{R}^n$.
\end{definition}

Here $C^3_{l,b}(\mathbb R^n)$ denotes the space of functions
$\varphi\in C^3(\mathbb R^n)$ such that $\varphi$ and its first-order
derivatives have at most linear growth, while all derivatives of order
two and three are bounded.

\begin{remark}\label{VI-viscosity solution-equ}\sl
 Since $\lambda>0$,    multiplying the obstacle terms
$W(x)-\psi_1(x)$ and $W(x)-\psi_2(x)$ by $\lambda$ does not change
their signs.
 Consequently,
 the inequalities in Definition \ref{VI-viscosity solution} involving
$W(x)-\psi_1(x)$ and $W(x)-\psi_2(x)$ are equivalent to those involving
$\lambda\big(W(x)-\psi_1(x)\big)$ and $\lambda\big(W(x)-\psi_2(x)\big)$, respectively.
Therefore, Definition \ref{VI-viscosity solution}  is equivalent to the standard definition of viscosity
solution  for the following   equation:
$$\begin{aligned}
\min\Big\{\!
\max\big\{\lambda W(x)-H^-(x, \lambda W (x), DW(x),D^2W(x)),\,\lambda\big(W(x)-&\psi_1(x)\big)\big\},\\
&
\,\lambda\big(W(x)-\psi_2(x)\big)
\Big\}=0,\  x\in\mathbb{R}^n.\\
\end{aligned}$$

\end{remark}

 Following the standard framework in differential games; see, for example, \cite{Buckdahn-Li-2008},
we finally introduce the admissible controls and Elliott--Kalton type
nonanticipative strategies on random intervals for later use in the study of the game problems.

\begin{definition}\label{admissible controls}\sl
Let $t\ges0$ and  $\tau\in\cS_t$.
Fix $u_0\in U$.
A process $u=\big\{u(s,\omega),(s,\omega)\in\llbracket t,\tau\rrbracket \big\}$ is called an admissible control for Player $1$ on $\llbracket t,\tau\rrbracket $, if its extension
$
u(s,\omega)\mathbf 1_{\{t\les s\les \tau(\omega)\}}
+
u_0\mathbf 1_{\{s<t\}\cup\{s>\tau(\omega)\}}
 $
is $\mathbb{F}$-progressively measurable and takes values in $U$.
The set of all admissible controls for Player 1 on $\llbracket t,\tau\rrbracket $ is denoted by $\mathcal U_{t,\tau}$.
We identify two controls $u_1$ and $u_2$ in $\mathcal U_{t,\tau}$ and write
$u_1\equiv u_2$ on $\llbracket t,\tau\rrbracket $, if
$
u_1=u_2,$ $\md s\times \md\mathbb P$-a.e. on $\llbracket t,\tau\rrbracket .
$

Similarly, the admissible controls for Player $2$ on $\llbracket t,\tau\rrbracket $ are defined by replacing
$U$, $u_0$, and $\mathcal{U}_{t,\tau}$ above with $V$, $v_0$, and   $\mathcal{V}_{t,\tau}$, respectively.

\end{definition}

\begin{definition}\sl Let $t\ges0$ and  $\tau\in\cS_t$.
A nonanticipative strategy for Player $1$ on $\llbracket t,\tau\rrbracket $ is a mapping
$
\alpha:\mathcal{V}_{t,\tau}\to\mathcal{U}_{t,\tau}
$
such that, for any $\mathbb{F}$-stopping time $S$ satisfying $t\les S\les \tau$, $\mathbb{P}$-a.s., and any $v_1,v_2\in\mathcal{V}_{t,\tau}$,
if
$
v_1\equiv v_2 \text{ on } \llbracket t,S\rrbracket ,
$
then
$
\alpha(v_1)\equiv \alpha(v_2)  \text{ on } \llbracket t,S\rrbracket .
$
The set of all nonanticipative strategies for Player $1$ on $\llbracket t,\tau\rrbracket $ is denoted by $\mathcal{A}_{t,\tau}$.

Similarly, a nonanticipative strategy for Player $2$ on $\llbracket t,\tau\rrbracket $ is a mapping
$
\beta:\mathcal{U}_{t,\tau}\to \mathcal{V}_{t,\tau},
$
defined analogously. The set of all such nonanticipative strategies is denoted by $\mathcal{B}_{t,\tau}$.

\end{definition}

 When $\tau=T$ for some deterministic $T\in[t,\infty)$ with $T<\infty$, the
above definitions reduce to the usual admissible controls and nonanticipative
strategies on the deterministic interval $[t,T]$, as in
\cite{Buckdahn-Li-2008}. When $\tau\equiv\infty$, they reduce to the
corresponding notions on the infinite horizon $[t,\infty)$; see, for example, \cite{Buckdahn-Li-Zhao-2021, Huang-Wei-2026}.

 \section{A Representation via an Augmented Infinite Horizon SDG}
\label{sec:SDG-IF}

In this section, we provide  the first probabilistic representation  of the HJBI
variational inequalities  \eqref{VI-} and \eqref{VI+} in terms of an infinite horizon SDG. The basic idea is to incorporate the bilateral obstacles into the running cost by enlarging the control spaces.

Let $\mathbf{u}\in\mathbb{R}^{m_1}$ and $\mathbf{v}\in\mathbb{R}^{m_2}$ be two additional symbols such that $\mathbf{u}\notin U$ and $\mathbf{v}\notin V$. We introduce the augmented control spaces
 $
\widetilde U:=U\cup\{\mathbf{u}\}, $ and $ \widetilde V:=V\cup\{\mathbf{v}\},
$  which are still compact.
Accordingly, one can define the corresponding sets of admissible controls $\widetilde{\mathcal U}_{0,\i}$ and $\widetilde{\mathcal V}_{0,\i}$, as well as the associated classes of nonanticipative strategies $\widetilde{\mathcal A}_{0,\i}$ and $\widetilde{\mathcal B}_{0,\i}$.

Next, we extend the coefficients $b,\sigma,f$ to
$
\widetilde b:\mathbb{R}^n\times \widetilde U\times \widetilde V\to\mathbb{R}^n,\
\widetilde \sigma:\mathbb{R}^n\times \widetilde U\times \widetilde V\to\mathbb{R}^{n\times d},\
\widetilde f:\mathbb{R}^n\times \mathbb{R}\times \mathbb{R}^d\times \widetilde U\times \widetilde V\to\mathbb{R},
$
by setting
\bel{wt-b}
\begin{aligned}&\wt{b}(x,\mathbf{u},\cd)\equiv0, ~\wt{b}(x,\cd,\mathbf{v})\equiv0,~ \wt{\sigma}(x,\mathbf{u},\cd)\equiv0,~ \wt{\sigma}(x,\cd,\mathbf{v})\equiv0,\\
&\wt f(x,y,z,u,v):=
\begin{cases}
f(x,y,z,u,v), & (u,v)\in U\times V,\\
\lambda\psi_1(x), &  u\in \widetilde{ U},\ v=\mathbf v,\\
\lambda\psi_2(x), &  u=\mathbf u,\ v\in V.
\end{cases}
\end{aligned}\ee
For any admissible controls $\widetilde u\in\widetilde{\mathcal U}_{0,\i}$ and $\widetilde v\in\widetilde{\mathcal V}_{0,\i}$,   consider the state equation
\begin{equation}\label{extend-SDE}\left\{\2n
  \begin{split}
&\md\widetilde{X}_{s}^{x;\widetilde{u},\widetilde{v}}
=\widetilde{b}(\widetilde{X}_{s}^{x;\widetilde{u},\widetilde{v}},\widetilde{u}_s,\widetilde{v}_s)\,\md s  +  \widetilde{\sigma}(\widetilde{X}_{s}^{x;\widetilde{u},\widetilde{v}}
,\widetilde{u}_s,\widetilde{v}_s)\,\md B_s, \q s\ges0,\\
&\widetilde{X}_{0}^{x;\widetilde{u},\widetilde{v}}=x, \\
\end{split}\right.
      \end{equation}
and the BSDE
      \begin{equation}\label{extend-BSDE}
      \begin{aligned}
 \widetilde Y _{s}^{x;\widetilde{u},\widetilde{v}}
=&\widetilde Y _{T}^{x;\widetilde{u},\widetilde{v}}  +  \int_{s}^{T}\big(\widetilde{f}(\widetilde{X}_{r}^{x;\widetilde{u},
\widetilde{v}}, \l\widetilde{Y}_{r}^{x;\widetilde{u},\widetilde{v}}, \widetilde{Z}_{r}^{x;\widetilde{u},\widetilde{v}},
\widetilde{u}_r,\widetilde{v}_r)  -  \lambda \widetilde Y _{r}^{x;\widetilde{u},\widetilde{v}}\big)\,\md r\\
&    -  \int_{s}^{T}\widetilde{Z}_{r}^{x;\widetilde{u},\widetilde{v}}\,\md B_r,\  0\les  s\les T<\i.\\
\end{aligned}
      \end{equation}
It is easy to verify that the extended coefficients $\widetilde b$, $\widetilde \sigma$, and $\widetilde f$
satisfy the same type of assumptions as $b$, $\sigma$, and $f$, possibly with modified constants.
More precisely, assumptions {\bf(C1)}--{\bf(C2)} remain valid, while {\bf(C3)} holds with
$\widetilde\mu:=\min\{\mu,0\}$, so that still $\widetilde\mu+\lambda>0$.
Hence, by Lemma \ref{LemmaA-1-FBSDE} with $\t\equiv\i $,
\eqref{extend-BSDE} is well posed.

We may therefore define the cost functional by
$
\widetilde J(x;\widetilde u,\widetilde v):=
\widetilde Y_0^{x;\widetilde u,\widetilde v},
$ $x\in\mathbb R^n,
$
and then introduce the lower and upper value functions by
\bel{wt-W-}
\widetilde W^-(x)
:=
\essinf_{\widetilde\beta\in\widetilde{\mathcal B}_{0,\i}}
\esssup_{\widetilde u\in\widetilde{\mathcal U}_{0,\i}}
\widetilde Y_0^{x;\widetilde u,\widetilde\beta(\widetilde u)},
\qquad x\in\mathbb R^n,
\ee
and
\bel{wt-W+}
\widetilde W^+(x)
:=
\esssup_{\widetilde \a\in\widetilde{\mathcal A}_{0,\i}}\essinf_{\widetilde v\in\widetilde{\mathcal V}_{0,\i}}
\widetilde Y_0^{x;\widetilde \a(\wt v), \widetilde v },
\qquad x\in\mathbb R^n.
\ee

\begin{theorem}\label{HJBI-VI-solution}\sl
Under  the assumptions $\mathbf{(C1)}$-$\mathbf{(C4)}$, $\widetilde W^-\in C_b(\mathbb R^n)$ and   is the unique viscosity solution of the HJBI variational inequality \eqref{VI-} in
$C_b(\mathbb R^n)$. Moreover, if $W_1\in C_b(\mathbb R^n)$ is a viscosity subsolution
and $ W_2\in C_b(\mathbb R^n)$ is a viscosity supersolution  of \eqref{VI-}, then $W_1\les W_2$ on $\mathbb{R}^n$.
\end{theorem}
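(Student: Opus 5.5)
The plan is to reduce the theorem to the known theory of infinite horizon recursive SDGs. Since the augmented coefficients $\widetilde b,\widetilde\sigma,\widetilde f$ satisfy {\bf(C1)}--{\bf(C3)} (with $\widetilde\mu=\min\{\mu,0\}$, $\widetilde\mu+\lambda>0$) on the compact sets $\widetilde U,\widetilde V$, the results of \cite{Buckdahn-Li-Zhao-2021,Huang-Wei-2026} apply. They give $\widetilde W^-\in C_b(\mathbb R^n)$, with boundedness from Lemma~\ref{LemmaA-1-FBSDE} and Lipschitz continuity from the stability estimate there. They also show that $\widetilde W^-$ is the unique bounded viscosity solution of the stationary HJBI equation
\[
\lambda W(x)-\widetilde H^-\big(x,\lambda W(x),DW(x),D^2W(x)\big)=0,
\]
where $\widetilde H^-$ is the lower Hamiltonian built with $\sup_{\tilde u\in\widetilde U}\inf_{\tilde v\in\widetilde V}$. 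Finally, they provide a comparison principle for bounded sub/supersolutions of this equation. One caveat: continuity of $\widetilde f$ in $(u,v)$ holds trivially because $\mathbf u,\mathbf v$ are isolated points of $\widetilde U,\widetilde V$, provided we take $\mathbf u,\mathbf v$ at positive distance from $U,V$.

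The core step is an algebraic identity for the augmented Hamiltonian. Set $G^-(x,y,p,X):=\lambda y'-H^-(\cdot)$, suppressing arguments, and write $\ell(u,v)$ for the bracket in $H^-$. Fix $u\in U$. The infimum over $\widetilde V$ equals $\min\{\inf_{v\in V}\ell(u,v),\lambda\psi_1(x)\}$, because $\widetilde f(x,\cdot,\cdot,u,\mathbf v)=\lambda\psi_1(x)$ and $\widetilde b=\widetilde\sigma=0$ when $v=\mathbf v$. For $u=\mathbf u$, the infimum over $\widetilde V$ equals $\min\{\lambda\psi_2(x),\lambda\psi_1(x)\}=\lambda\psi_2(x)$, using $\psi_2\les\psi_1$ from {\bf(C4)}. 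Taking the supremum over $\widetilde U$ and using that $\sup_u\min\{a_u,c\}=\min\{\sup_u a_u,c\}$, we obtain
\[
\widetilde H^-=\max\big\{\min\{H^-,\lambda\psi_1\},\lambda\psi_2\big\}.
\]
Therefore
\[
\lambda W-\widetilde H^-=\min\Big\{\max\big\{\lambda W-H^-,\lambda(W-\psi_1)\big\},\lambda(W-\psi_2)\Big\},
\]
which is exactly the operator of Remark~\ref{VI-viscosity solution-equ}. Since this identity holds pointwise for all arguments $(x,W(x),D\varphi(x),D^2\varphi(x))$, viscosity sub/supersolutions of the augmented HJBI equation coincide with those of \eqref{VI-} in the sense of Definition~\ref{VI-viscosity solution}(i),(ii). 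The comparison assertion of the theorem then follows directly from the comparison principle for the augmented equation.

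It remains to prove the constraint $\psi_2\les\widetilde W^-\les\psi_1$ required by Definition~\ref{VI-viscosity solution}(iii). I expect this to be the only delicate point, and I plan two routes.

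The first route is via the PDE. Let $x_0$ be a maximum point of $\widetilde W^--\psi_1$. Since $\psi_1$ is only Lipschitz, we cannot test directly. Instead we double variables: consider maximum points of $\widetilde W^-(x)-\psi_1(y)-|x-y|^2/\varepsilon-\delta|x|^2$. At such a point, the subsolution inequality at $x$ gives either $\widetilde W^-\les\psi_2\les\psi_1$, or $\lambda\widetilde W^-\les\widetilde H^-\les\lambda\psi_1(x)$. In both cases $\widetilde W^-(x)\les\psi_1(x)$, and passing $\varepsilon,\delta\to0$ gives the bound at the maximum. The lower bound is symmetric, using the supersolution property.

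The second route is probabilistic and simpler. For the upper bound, Player~2 chooses $\widetilde\beta\equiv\mathbf v$. Then $\widetilde X\equiv x$, and the driver is at most $\lambda\psi_1(x)$ whenever $\widetilde u\neq\mathbf u$, and equals $\lambda\psi_2(x)\les\lambda\psi_1(x)$ when $\widetilde u=\mathbf u$. The BSDE comparison of Lemma~\ref{Pro-A.3}, applied with constant solution $\psi_1(x)$, then gives $\widetilde Y_0\les\psi_1(x)$. For the lower bound, Player~1 plays $\mathbf u$, and for every $\widetilde\beta$ the driver is $\lambda\psi_2(x)$ or $\lambda\psi_1(x)\ges\lambda\psi_2(x)$, so $\widetilde Y_0\ges\psi_2(x)$.

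With the constraint in hand, $\widetilde W^-$ is a viscosity solution of \eqref{VI-}. Uniqueness in $C_b(\mathbb R^n)$ follows from the comparison result established above.
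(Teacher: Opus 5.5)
Your proposal is correct and follows the paper's own argument: place the augmented system in the infinite-horizon recursive SDG framework of \cite{Huang-Wei-2026,Buckdahn-Li-Zhao-2021}, prove $\widetilde H^-=\bigl[H^-\wedge\lambda\psi_1\bigr]\vee\lambda\psi_2$, and transfer existence, uniqueness and comparison to \eqref{VI-} via Remark~\ref{VI-viscosity solution-equ}. Your explicit check of $\psi_2\le\widetilde W^-\le\psi_1$, which Definition~\ref{VI-viscosity solution}(iii) requires but the paper leaves implicit, is a useful addition, and the probabilistic route is the cleaner of your two; one small correction there is that by \eqref{wt-b} the driver at $(\mathbf u,\mathbf v)$ is $\lambda\psi_1(x)$, so under $\widetilde\beta\equiv\mathbf v$ one gets exactly $\widetilde Y\equiv\psi_1(x)$.
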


\begin{proof}
By construction, the augmented controlled system introduced above is covered by
the framework of infinite horizon SDGs
developed in \cite{Huang-Wei-2026}; see also
\cite{Buckdahn-Li-Zhao-2021}. Therefore, by \cite[Corollary 3.10]{Huang-Wei-2026} or \cite[Theorem 4.1]{Buckdahn-Li-Zhao-2021}, the lower value function $\widetilde W^- $ belongs to $C_b(\mathbb R^n)$ and is the unique viscosity solution in
$C_b(\mathbb R^n)$  of the HJBI equation
 \begin{equation}\label{extend HJB equation}
 \lambda  {W}(x)-\widetilde{H} (x, \l{W}(x), D {W}(x),D^2 {W}(x))=0,\q x\in\dbR^n,
      \end{equation}
where the Hamiltonian
$\widetilde{H} :\mathbb{R}^n\times\mathbb{R}\times\mathbb{R}^n\times\dbS^n\rightarrow \mathbb{R}$ is defined by
$$\widetilde{H}  (x,y,p,A)=\sup\limits_{\widetilde{u}\in \widetilde{U}}\inf\limits_{\widetilde{v}\in \widetilde{V}}\Big\{\frac{1}{2}\tr(\widetilde{\si}\widetilde\si^\top (x,\widetilde{u},\widetilde{v})A)  +  \widetilde{b}(x,\widetilde{u},\widetilde{v})\cd p +  \widetilde{f}(x,y,p \widetilde{\sigma}(x,\widetilde{u},\widetilde{v}),\widetilde{u},\widetilde{v})\Big\}.
$$
 Moreover, the comparison principle holds for the HJBI  equation \eqref{extend HJB equation}.

By the definition \eqref{wt-b} of the extended coefficients and using
$\psi_2\les\psi_1$, one easily verifies that, for all
$(x,y,p,A)\in\mathbb{R}^n\times\mathbb{R}\times\mathbb{R}^n\times \cS^n$,
$$
\widetilde H(x,y,p,A)
=
\bigl[\,H^{-}(x,y,p,A)\wedge \lambda\psi_1(x)\,\bigr]\vee \lambda\psi_2(x).
$$
Consequently,  \eqref{extend HJB equation} is equivalent to
$$\begin{aligned}
\min\Big\{
\max\big\{\lambda W(x)-H^{-}\big(x, \l{W}(x), D  W(x),D^2  W(x)\big),
\,\l\big(  W(x)&-\psi_1(x)\big)\big\},\\
&
\,\l\big(  W(x)-\psi_2(x)\big)
\Big\}=0,\ x\in\dbR^n.\\
\end{aligned}$$
Hence, since $\lambda>0$, Remark \ref{VI-viscosity solution-equ}  implies that,
$\widetilde W^-$ is the unique viscosity solution of \eqref{VI-}.
Moreover, the comparison principle for \eqref{VI-} in $C_b(\mathbb R^n)$ follows from the corresponding one  for \eqref{extend HJB equation}.
   \end{proof}

Similar to Theorem  \ref{HJBI-VI-solution}, we have the result for the HJBI variational inequality \eqref{VI+}.

\begin{theorem}\sl
Under assumptions {\bf(C1)}--{\bf(C4)}, $\widetilde W^+\in C_b(\mathbb R^n)$ and is the unique viscosity solution of the HJBI variational inequality \eqref{VI+} in $ C_b(\mathbb R^n)$.  Moreover, the comparison  principle holds for \eqref{VI+} in  $C_b(\mathbb R^n)$.
\end{theorem}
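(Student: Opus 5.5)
The plan is to repeat the argument of Theorem \ref{HJBI-VI-solution}, now using the upper value function $\widetilde W^+$ of the augmented game defined in \eqref{wt-W+}. As observed before Theorem \ref{HJBI-VI-solution}, the extended coefficients $\widetilde b,\widetilde\sigma,\widetilde f$ satisfy {\bf(C1)}--{\bf(C3)}, with $\widetilde\mu=\min\{\mu,0\}$ and $\widetilde\mu+\lambda>0$. Hence the upper-value counterpart of \cite[Corollary 3.10]{Huang-Wei-2026} (equivalently \cite[Theorem 4.1]{Buckdahn-Li-Zhao-2021}) applies. It gives $\widetilde W^+\in C_b(\mathbb R^n)$, shows that $\widetilde W^+$ is the unique viscosity solution in $C_b(\mathbb R^n)$ of
$$
\lambda W(x)-\widetilde H^+\big(x,\lambda W(x),DW(x),D^2W(x)\big)=0,
$$
and provides a comparison principle for this equation. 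Here $\widetilde H^+$ is the $\inf_{\widetilde v\in\widetilde V}\sup_{\widetilde u\in\widetilde U}$ of the same augmented Hamiltonian integrand.

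The key algebraic step is to identify $\widetilde H^+$. Write $L(x,y,p,A,u,v)$ for the original integrand and fix $(x,y,p,A)$.
\begin{itemize}
\item If $\widetilde v=\mathbf v$, then by \eqref{wt-b} every $\widetilde u\in\widetilde U$ (including $\mathbf u$) yields $\lambda\psi_1(x)$, so the inner supremum equals $\lambda\psi_1(x)$.
\item If $\widetilde v=v\in V$, then $\widetilde u\in U$ yields $L(\cdot,u,v)$ and $\widetilde u=\mathbf u$ yields $\lambda\psi_2(x)$. The inner supremum is therefore $\max\{\sup_{u}L(\cdot,u,v),\lambda\psi_2(x)\}$.
\end{itemize}
Taking the infimum over $v\in V$ commutes with taking the maximum with a constant. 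Combining both cases and using $\psi_2\les\psi_1$, we get
$$
\widetilde H^+(x,y,p,A)=\big[H^+(x,y,p,A)\vee\lambda\psi_2(x)\big]\wedge\lambda\psi_1(x)=\big[H^+(x,y,p,A)\wedge\lambda\psi_1(x)\big]\vee\lambda\psi_2(x).
$$
For reals $a,h$ and $p_2\les p_1$, one has the exact identity $a-\max\{\min\{h,p_1\},p_2\}=\min\{\max\{a-h,a-p_1\},a-p_2\}$. Thus the two operators coincide pointwise. Consequently, viscosity sub- and supersolutions of the equation for $\widetilde H^+$ are exactly those of the reformulated inequality in Remark \ref{VI-viscosity solution-equ} (with $H^+$ in place of $H^-$), hence exactly those of \eqref{VI+}. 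Comparison and uniqueness in $C_b(\mathbb R^n)$ transfer verbatim.

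The only extra point, and the step I expect to require some care, is the constraint $\psi_2\les \widetilde W^+\les\psi_1$ demanded in part (iii) of the definition. I would derive it from the viscosity inequalities. Suppose $\widetilde W^+(x_0)<\psi_2(x_0)$. Use the test function $\varphi_\varepsilon(x)=|x-x_0|^2/\varepsilon$, localized so that it lies in $C^3_{l,b}$. By boundedness of $\widetilde W^+$, the function $\widetilde W^+-\varphi_\varepsilon$ has a local minimum at some $x_\varepsilon$ with $x_\varepsilon\to x_0$ and $\widetilde W^+(x_\varepsilon)\to\widetilde W^+(x_0)$. The supersolution inequality gives $\lambda(\widetilde W^+(x_\varepsilon)-\psi_2(x_\varepsilon))\ges0$, and letting $\varepsilon\to0$ and using continuity of $\psi_2$ yields a contradiction. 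Symmetrically, at local maxima the subsolution inequality forces $\max\{\cdots,\widetilde W^+-\psi_1\}\les0$ whenever $\widetilde W^+>\psi_2$. This yields $\widetilde W^+\les\psi_1$ by the same argument, after first ruling out $\widetilde W^+=\psi_2$ at the point, which is trivial since then $\widetilde W^+=\psi_2\les\psi_1$ there. This completes the plan.
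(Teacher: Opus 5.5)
Your proposal is correct and follows the paper's intended route. The paper omits this proof as symmetric to Theorem~\ref{HJBI-VI-solution}: apply \cite[Corollary 3.10]{Huang-Wei-2026} to the augmented upper game, identify the augmented Hamiltonian as $H^+$ clamped between $\lambda\psi_2$ and $\lambda\psi_1$, and transfer the result via Remark~\ref{VI-viscosity solution-equ}. Your use of $\psi_2\les\psi_1$ to swap $\vee$ and $\wedge$ is exactly what the $\inf\sup$ order requires here.

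Your extra check of the constraint $\psi_2\les\widetilde W^+\les\psi_1$ from Definition~\ref{VI-viscosity solution}(iii) is something the paper leaves implicit. There is a sign slip in its supersolution half. With $\varphi_\varepsilon=|x-x_0|^2/\varepsilon$, the function $\widetilde W^+-\varphi_\varepsilon$ tends to $-\infty$ and attains a maximum, not a minimum. You need $\varphi_\varepsilon=-|x-x_0|^2/\varepsilon$ there, localized as you say.

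Alternatively, both bounds follow directly from the game. Playing $\widetilde v\equiv\mathbf v$ freezes the state and gives $\widetilde Y_0=\psi_1(x)$ for every $\widetilde\alpha$. The constant strategy $\widetilde\alpha\equiv\mathbf u$ forces $\widetilde f\ges\lambda\psi_2(x)$, and hence $\widetilde Y_0\ges\psi_2(x)$.
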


 The proof is symmetric to that of  Theorem  \ref{HJBI-VI-solution} and is therefore omitted.

\section{A Representation via a Two-Player Zero-Sum Mixed Control--Stopping SDG}
\label{sec:SDG-MT}

In this section, we provide another probabilistic interpretation  of \eqref{VI-} and \eqref{VI+} through a mixed-type SDG involving both  controls and stopping decisions.
More precisely, the state process is governed by the following autonomous controlled SDE
\bel{state}
\begin{cases}
\md X_s = b(X_s,u_s,v_s)\,\md s+\sigma(X_s,u_s,v_s)\,\md B_s,\qquad s\ges 0,\\
X_0=x.
\end{cases}
\ee
For $\lambda>0$ and
$\theta,\tau\in\mathcal S$, consider the following BSDE:
\bel{BSDE} \left\{\2n\ba{ll}
\ns\ds
 Y_{s\wedge \theta \wedge \t}^{x;u,\th,v,\t } = Y_{ T  \wedge \theta \wedge \t}^{x;u,\th,v,\t } + \int_{s\wedge \theta \wedge \t}^{ T  \wedge\theta \wedge \t}\big( f(X_r, \l Y_r^{x;u,\th,v,\t }, Z_r^{x;u,\th,v,\t }, u_r, v_r) - \l Y_r^{x;u,\th,v,\t } \big)\,\md r \\
 \ns\ds\qq\qq\qq - \int_{s\wedge \theta \wedge \t}^{ T  \wedge\theta \wedge \t}Z_r^{x;u,\th,v,\t }\,\md B_r, \ \ \mbox{ for all }0\les s\les T<\i, \\
 \ns\ds  Y_{\theta \wedge \t}^{x;u,\th,v,\t }= \xi_{\theta,\tau},\q \mbox{on } \{\theta \wedge \t<\i \},
 \ea\right.\ee
Here,   on $\{\theta\wedge\tau<\infty\}$, $\xi_{\theta,\tau}:= {\psi}_1 (X_\t ) \mathbf 1_{\{\t \les \th\}}+  {\psi}_2(X_\th )\mathbf 1_{\{\th <\t\}}$.
If $\theta\wedge\tau=\infty$, the game is
regarded as never being stopped and the discounted terminal payoff
 is
defined to be zero. More precisely,
$
e^{-\lambda(\theta\wedge\tau)}\xi_{\theta,\tau}
=e^{-\lambda\tau}\psi_1(X_\tau)\mathbf 1_{\{\tau\les \theta,\ \tau<\infty\}}
+
e^{-\lambda\theta}\psi_2(X_\theta)\mathbf 1_{\{\theta<\tau,\ \theta<\infty\}}=0
$ on $\{\theta\wedge\tau=\infty\}.
$

 \ms

In the above setting, the coefficients $b,\sigma,f,\psi_1$, and $\psi_2$ are the same as those in Subsection  \ref{subsec:Ass} and satisfy assumptions {\bf(C1)}--{\bf(C4)}.
Under assumptions $\mathbf{(C1)}$-$\mathbf{(C3)}$,
 it follows from Lemma \ref{LemmaA-1-FBSDE} that, for any $x\in \mathbb{R}^n$, $\tau,\theta\in \cS$, and $(u,v)\in \mathcal{U}_{0,\theta\wedge\tau}\times \mathcal{V}_{0,\theta\wedge\tau}$, BSDE   \eqref{BSDE} admits a unique solution $(Y^{x;u,\th,v,\t },Z^{x;u,\th,v,\t })$.
Moreover,
$Y_{\cd}^{x;u,\th,v,\t }$ is a continuous process bounded by $B_{1}+B_{ 2}+\frac{B_f}{\lambda}$ and $Z^{x;u,\th,v,\t }\in \cH^2_{\mathrm{loc}}(0,\theta \wedge \t;\mathbb{R}^d)    $.
Then, for $x\in\dbR^n$, $\t,\th\in\cS$ and  $(u,v)\in \cU_{0,\th\wedge\t } \times  \cV_{0,\th\wedge\t }  $, we  define the cost functional by
\bel{cost}
J(x;  u,\th;v,\t ):=Y_0^{x;u,\th,v,\t }.
\ee

Based on the above construction, we now formulate a two-person zero-sum
mixed SDG involving both controls and
stopping decisions. The  controls affect the state dynamics and the
running payoff, while the stopping decisions determine the termination time
and hence the terminal payoff. More precisely, if the game is stopped at
$\theta\wedge\tau$, then the terminal payoff is $\psi_1(X_\tau)$ on
$\{\tau\les \theta\}$ and $\psi_2(X_\theta)$ on $\{\theta<\tau\}$.

For the control component, we adopt the Elliott--Kalton framework of
nonanticipative strategies against controls, see \cite{Elliott-Kalton-1972}.  By analogy with this mechanism, we
therefore formulate the stopping component as a nonanticipative rule depending
on the opponent's control process. More precisely, the stopping time chosen by
a player may depend on the opponent's control path only through its past and
present values, but not through its future values. This leads to the following
classes of nonanticipative stopping strategies.

\begin{definition}\label{Def-ST}\sl
Let $t\ges 0$.
A mapping $\boldsymbol{\theta}:\mathcal V_{t,\infty}\to\mathcal S_t$ is called a
nonanticipative stopping strategy for Player 1 on $[t,\infty)$ if, for any
$\mathbb F$-stopping time $\varrho$ with $\varrho\ges t$ and any
$v_1,v_2\in\mathcal V_{t,\infty}$,
if $v_1 \equiv v_2$ on $\llbracket t,\varrho\rrbracket$, then
$
\boldsymbol{\theta}[v_1]\wedge \varrho
=
\boldsymbol{\theta}[v_2]\wedge \varrho,
$ $ \mathbb P\hbox{-a.s.}
$
We denote by $\mathfrak S_{t,\infty}$ the set of all
nonanticipative stopping strategies for Player 1 on $[t,\infty)$.

Similarly, a mapping $\boldsymbol{\tau}:\mathcal U_{t,\infty}\to\mathcal S_t$
is called a nonanticipative stopping strategy for Player 2 on $[t,\infty)$ if
the same nonanticipativity condition holds with $v_1,v_2$ replaced by
$u_1,u_2\in\mathcal U_{t,\infty}$. The set of all such strategies is denoted
by $\mathfrak T_{t,\infty}$.
\end{definition}

This formulation is consistent with the notion of strongly nonanticipative
families of stopping times used in SDGs with stopping or impulse decisions;
see, for example, \cite{BayraktarHuang2013, Azimzadeh2019}. A deterministic
analogue already appears in optional stopping games, where stopping rules are
formulated as nonanticipative mappings of the players' control paths; see
Kalton \cite{Kalton1974}. A further explanation of this choice will be given
in Remark \ref{Remark-nonanticipative-stopping-strategy} below, where we point
out the difficulty that arises if the stopping component is formulated only in
terms of ordinary stopping times.

 In the lower game, Player 2, the minimizer, first chooses a pair
$(\beta,\boldsymbol{\tau})\in
\mathcal B_{0,\infty}\times\mathfrak T_{0,\infty}$,
  both acting against
the control of Player 1. After this response rule is fixed, Player 1 chooses
a control $u\in\mathcal U_{0,\infty}$ and a stopping time
$\theta\in\mathcal S$. The realized control and stopping time of Player 2 are
then given by $\beta(u)$ and $\boldsymbol{\tau}[u]$, respectively.
The upper game is formulated symmetrically, with the roles of the two players
interchanged. Accordingly, we define the lower and upper value functions,
respectively, by
\bel{LU-VF}\ba{ll}
\ns\ds W^-(x)
:=
\essinf_{\substack{\beta\in\mathcal B_{0,\infty}\\
\boldsymbol{\tau}\in\mathfrak T_{0,\infty}}}
\esssup_{\substack{u\in\mathcal U_{0,\infty}\\
\theta\in\mathcal S}}
J\bigl(x;u,\theta;\beta(u),\boldsymbol{\tau}[u]\bigr),\\
\ns\ds W^+(x)
:=
\esssup_{\substack{\alpha\in\mathcal A_{0,\infty}\\
\boldsymbol{\theta}\in\mathfrak S_{0,\infty}}}
\essinf_{\substack{v\in\mathcal V_{0,\infty}\\
\tau\in\mathcal S}}
J\bigl(x;\alpha(v),\boldsymbol{\theta}[v];v,\tau\bigr).
\ea\ee
%

 %
%
%
%
%
%
Before stating the main result of this section, we impose the following compatibility condition.
\begin{description}

  \item  {\bf(C5)} The constant $l_{fy}$ satisfies $0\les l_{fy}<1$, and, whenever $\mu\les0$, we require $0\les l_{fy} < \sqrt{1 + \frac{\mu}{\lambda}}$.

\end{description}

\begin{theorem}\label{original problem value function existence}\sl
Assume {\bf(C1)}--{\bf(C5)}. Then the lower and upper value functions $W^{-}$ and $W^{+}$ of the mixed-type SDG  coincide with the unique viscosity solutions of the variational inequalities \eqref{VI-} and \eqref{VI+}, respectively.
\end{theorem}

%

In the following, only the statement for the lower value function $W^-$ will be proved; the argument for the upper value function $W^+$ is completely analogous.
Before proceeding, we briefly explain the main idea of the proof.
First, we introduce a family of penalized
SDGs and show that their value functions
$W_\varepsilon$ converge to the unique viscosity solution $W$ of
\eqref{VI-}.  Then, using the DPP established for $W_{\varepsilon}$, we derive, through a limiting argument, a corresponding dynamic programming representation for $W$. This representation then
allows one to return to the original mixed-type differential game and identify $W$ with the lower value function $W^-$.

\subsection{Penalized infinite horizon SDGs coupled with their value functions}

For any $\varepsilon>0$,  we set $l_\varepsilon:=\frac{\e l_{fx}+l_1+l_2}{ \sqrt{(\l \e+1)((\l+\m)\e+1)}-(l_{fy}\l\e + 1)}>0$, $B_\e:=\frac{\e B_f+B_1+B_2}{\l\e}$ and  define
$$
\mathrm{Lip}_{\e}^b(\mathbb{R}^n)
:=
\big\{\f\in C_b(\dbR^n): |\varphi(x) |\les B_\e,\    |\varphi(x)-\varphi(y)|\les l_\varepsilon |x-y|,
\ \forall x,y\in\mathbb R^n \big \}.
$$
Equipped with the metric induced by the uniform norm
$ \ds
\lVert\varphi\rVert_{\infty ;\dbR^n}
:=
\sup_{x\in\mathbb R^n}|\varphi(x)|,
$
$\mathrm{Lip}_{\e}^b(\mathbb R^n)$ is a complete metric space.
For later use, for  $\f\in\mathrm{Lip}_{\e}^b(\mathbb{R}^n),$ $  (x,z)\in\dbR^n\times\dbR^d,$ $(u,v)\in U\times V$, we also  set
\bel{}\ba{ll}
\ns\ds P_\e^\f(x ):= \frac{1}{\e}\f(x)
-\frac{1}{\e}\big(\f(x)-\psi_1(x)\big)^{+}
+\frac{1}{\e}\big(\f(x)-\psi_2(x)\big)^{-} , \\
\ns\ds F_\e^{\f,\l}(x,z, u,v):= f(x,\l\f,z,u,v)+P_\e^\f(x ).\\
\ea
\ee
Using $\psi_2\les \psi_1$, we observe that
$ \ds
P_{\varepsilon}^{\varphi}(x)
=
\frac1\varepsilon
\bigl((\varphi(x)\vee \psi_2(x))\wedge \psi_1(x)\bigr),$ $x\in\dbR^n.
$
Then, for any $x, \bar{x}\in \dbR^n$ and $\f , \f_ 1, \f_2\in \mathrm{Lip}_{\e}^b(\mathbb{R}^n) $, we have
\begin{equation}\label{positive-negative-part-estimate-1}
\ba{ll}
\ns\ds{\rm (i) }\  |P_\e^{\f}(x)  |\les\frac{ B_\e+B_{1}+B_{2}}{\e} , \\
\ns\ds{\rm (ii) }\  |P_\e^{\f_1}(x)-P_\e^{\f_2}(x)-\frac{1}{\e}(\f_1(x)-\f_2(x)) |\les\frac{1}{\e} |\f_1(x)-\f_2(x)|, \\
\ns\ds {\rm (iii) }\ \big| P_\e^{\f_1}(x )-P_\e^{\f_2}(x )\big|\les\frac{1}{\e}|\f_1(x)-\f_2(x)|, \\
\ns\ds  {\rm (iv) }\ \big| P_\e^{\f }(x )-P_\e^{\f }(\bar x )\big| \les\frac{1}{\e}\big(|\f(x)-\f(\bar x)| +|\psi_1(x)-\psi_1(\bar{x})|
+|\psi_2(x)-\psi_2(\bar{x})|\big).
\ea
\end{equation}
Therefore, for any $x, \bar{x}\in \dbR^n$, $z, \bar{z}\in \dbR^d$   and $\f , \f_ 1, \f_2\in \mathrm{Lip}_{\e}^b(\mathbb{R}^n) $,
 \begin{equation}\label{Con-F}
\ba{ll}
\ns\ds{\rm (i) }\  |F_\e^{\f,\l}(x,z, u,v)-F_\e^{\f,\l}(\bar x,\bar z, u,v) |\les\(l_{fx}+ l_{fy}\l l_{\e}  +\frac{l_{\e}+l_{1}+l_{2}}{\e} \)
|x-\bar x|+l_{fz}|z-\bar z|, \\
\ns\ds{\rm (ii) }\  |F_\e^{\f_1,\l}(x,z, u,v)-F_\e^{\f_2,\l}(x,z, u,v) |\les(\frac{1}{\e} + l_{fy}\l)|\f_1(x)-\f_2(x)| , \\
\ns\ds {\rm (iii) }\  | F_\e^{\f,\l}(x,0,u,v)|\les B_f+ \frac{ B_\e+B_{1}+B_{2}}{\e}   .
%
\ea
\end{equation}

For any $\varepsilon>0$, we consider the following penalized system:
for each $x\in\mathbb R^n$ and each admissible control pair
$(u,v)\in\mathcal U_{0,\infty}\times\mathcal V_{0,\infty}$,
 \bel{penalized BSDE}\left\{\2n\ba{ll}
 \ns\ds Y_{s}^{\e,x;u,v} = Y_{ T}^{\e,x;u,v}+\int_{s  }^{ T }\( F_\e^{W_\e,\l}(X_r^{x;u,v}, Z_r^{\e,x;u,v} ,u_r, v_r)-\big(\l+\frac1\e \big) Y_r ^{\e,x;u,v}\)\md r \\
\ns\ds  \hskip 2cm   - \int_{s  }^{ T }Z_r^{\e,x;u,v}\md B_r,\q \forall 0\les s\les T<\i ,\\
 \ns\ds  W_\e(x)=\essinf_{\b \in \cB_{0,\i} } \esssup_{u \in \cU_{0,\i}  }   Y_{0}^{\e,x;u,\b(u)} ,\q x\in\dbR^n,
\ea\right.\ee
 where $X^{x;u,v}_\cd$ satisfies \eqref{state}.
Unlike a standard BSDE, \eqref{penalized BSDE} is coupled with its lower
value function $W_\varepsilon$ through the generator
$F_\varepsilon^{W_\varepsilon}$. Similar coupled systems also appear in
\cite{Hao-Li-2017, Li-Li-2015, Yang-Zhang-Zhou-2023}. We first establish the
following result for \eqref{penalized BSDE}.

\begin{proposition}\label{lower value function existence}\sl
Under  assumptions {\bf(C1)}-{\bf(C5)}, for any $x\in\dbR^n$, $(u(\cd),v(\cd))\in   \cU_{0,\i}\times \cV_{0,\i}$, the penalized system \eqref{penalized BSDE} admits a unique solution $ (Y^{\e,  x;u,v},Z^{\e,x;u,v},W_\e)\in L^{\i}_{\mathbb{F}}(0,\i;\dbR)\times \cH^2_{\mathrm{loc}}(0,\i;\mathbb{R}^d)\times \mathrm{Lip}_{\e}^b(\mathbb{R}^n)$.
Moreover, the following property holds:

(i) for all $t\ges 0$, $x,\bar x\in\dbR^n$, and $u  \in\cU_{0,\i}$, $v  \in\cV_{0,\i}$,
$$|Y_{t}^{\e,x;u,v}|\les B_\e ,
\q |W_{\e}(x)  |\les B_\e ,\q
 |W_{\e}(x) -W_{\e}(\bar x)|\les  l_\varepsilon|x-\bar x|.$$

(ii)  $W_{\e}(\cd)$ is deterministic and is  the unique viscosity solution in $C_b(\dbR^n)$ of the following HJBI equation
 \begin{equation}\label{HJB equation-penalized value function}
  \begin{aligned}
&\lambda W_{\e}(x)+\frac{1}{\e}\big(W_{\e}(x)-\psi_1(x)\big)^{+}
-\frac{1}{\e}\big(W_{\e}(x)-\psi_2(x)\big)^{-}\\
&\qq\qq  -H^{-}\big(x,\l W_{\e}(x),DW_{\e}(x),D^2W_{\e}(x)\big)=0,\ x\in\dbR^n.
  \end{aligned}
      \end{equation}


\end{proposition}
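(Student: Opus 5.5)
The plan is a fixed-point argument on the complete metric space $\mathrm{Lip}_{\e}^b(\mathbb R^n)$. For a frozen $\f\in\mathrm{Lip}_{\e}^b(\mathbb R^n)$, consider the decoupled BSDE
\[
Y_s=Y_T+\int_s^T\Big(F_\e^{\f,\l}(X_r^{x;u,v},Z_r,u_r,v_r)-\big(\l+\tfrac1\e\big)Y_r\Big)\md r-\int_s^TZ_r\,\md B_r .
\]
This is an infinite-horizon BSDE of the type \eqref{A.2-tau-g} with $\tau\equiv\infty$ and discount $\l+\frac1\e$. The driver $F_\e^{\f,\l}$ does not depend on $Y$, so the monotonicity condition is trivial. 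By \eqref{Con-F}, the driver is Lipschitz in $x$ and $z$ and is bounded at $z=0$. Lemma \ref{LemmaA-1-FBSDE}, which extends routinely to controlled coefficients, then gives existence and uniqueness of $(Y^{\f},Z^{\f})$. It also gives the bound
\[
|Y^{\f}|\les \frac{B_f+(B_\e+B_1+B_2)/\e}{\l+1/\e}.
\]
A direct computation with $B_\e=(\e B_f+B_1+B_2)/(\l\e)$ shows that this bound equals $B_\e$.

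Next define $\Phi(\f)(x):=\essinf_\b\esssup_u Y_0^{\f,x;u,\b(u)}$. This is an infinite-horizon recursive lower value function with coefficients $(b,\sigma,F_\e^{\f,\l})$ and discount $\l+\frac1\e$. By \cite[Corollary 3.10]{Huang-Wei-2026} (or \cite{Buckdahn-Li-Zhao-2021}), $\Phi(\f)$ is deterministic, bounded by $B_\e$, and Lipschitz. For the Lipschitz constant, apply the estimate of Lemma \ref{LemmaA-1-FBSDE} with $h\equiv0$, discount $\l+\frac1\e$, and monotonicity constant $\mu$. This gives
\[
\mathrm{Lip}(\Phi(\f))\les \frac{l_{fx}+l_{fy}\l l_\e+(l_\e+l_1+l_2)/\e}{\sqrt{(\l+\frac1\e)(\l+\mu+\frac1\e)}}.
\]
Multiplying numerator and denominator by $\e$, the requirement $\mathrm{Lip}(\Phi(\f))\les l_\e$ becomes
\[
\e l_{fx}+l_1+l_2+(l_{fy}\l\e+1)l_\e\les l_\e\sqrt{(\l\e+1)((\l+\mu)\e+1)}.
\]
This is exactly the defining equation of $l_\e$. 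For $l_\e$ to be positive, the denominator $\sqrt{(\l\e+1)((\l+\mu)\e+1)}-(l_{fy}\l\e+1)$ must be positive. Expanding, this amounts to $\l^2\e(1+\mu/\l-l_{fy}^2)+\l(2+\mu/\l-2l_{fy})>0$, which holds precisely under \textbf{(C5)}. Hence $\Phi$ maps $\mathrm{Lip}_{\e}^b(\mathbb R^n)$ into itself.

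For the contraction, take $\f_1,\f_2$. Lemma \ref{Pro-A.3} (with $h_i\equiv0$, discount $\l+\frac1\e$) together with \eqref{Con-F}(ii) gives, for each fixed control pair,
\[
|Y^{\f_1}_0-Y^{\f_2}_0|\les\frac{1/\e+l_{fy}\l}{\l+1/\e}\,\|\f_1-\f_2\|_\infty=\frac{1+l_{fy}\l\e}{1+\l\e}\,\|\f_1-\f_2\|_\infty .
\]
Because this holds uniformly over controls, it passes through the $\essinf\esssup$, so it bounds $\|\Phi(\f_1)-\Phi(\f_2)\|_\infty$. Since $l_{fy}<1$, the factor is strictly less than $1$. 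Banach's fixed point theorem then yields a unique $W_\e\in\mathrm{Lip}_{\e}^b(\mathbb R^n)$ with $\Phi(W_\e)=W_\e$. Setting $(Y^{\e},Z^{\e}):=(Y^{W_\e},Z^{W_\e})$ gives a solution of \eqref{penalized BSDE}, and the estimates in (i) are inherited from the bounds above. Uniqueness in the stated class follows because any solution's third component is a fixed point of $\Phi$, and the BSDE is then uniquely solvable for that fixed $W_\e$.

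For (ii), $W_\e=\Phi(W_\e)$ is, by \cite[Corollary 3.10]{Huang-Wei-2026}, the unique viscosity solution in $C_b(\mathbb R^n)$ of the equation with $f$ replaced by $F_\e^{W_\e,\l}$ and $\l$ replaced by $\l+\frac1\e$. That equation is
\[
\big(\l+\tfrac1\e\big)W(x)-\sup_u\inf_v\Big[\tfrac12\tr(\sigma\sigma^\top D^2W)+b\cdot DW+f(x,\l W_\e(x),DW\sigma,u,v)\Big]-P_\e^{W_\e}(x)=0 .
\]
Evaluated at $W=W_\e$, the terms $\frac1\e W_\e-P_\e^{W_\e}$ reduce to $\frac1\e(W_\e-\psi_1)^+-\frac1\e(W_\e-\psi_2)^-$, which is \eqref{HJB equation-penalized value function}.

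The main obstacle is uniqueness for \eqref{HJB equation-penalized value function} itself, because there $W$ enters $f$ and the penalty nonlinearly. My first attempt is to note that $y\mapsto\l y+\frac1\e(y-\psi_1)^+-\frac1\e(y-\psi_2)^-$ is nondecreasing. Moreover, $-f(x,\l y,\cdot)$ is nondecreasing in $y$ by the monotonicity in \textbf{(C2)}. The equation is therefore proper, and the comparison argument of \cite{Huang-Wei-2026} applies verbatim.
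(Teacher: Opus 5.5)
Your proof is correct and follows the paper's route: a Banach fixed point for the frozen-coefficient lower-value map on $\mathrm{Lip}_{\varepsilon}^b(\mathbb R^n)$ with contraction factor $\frac{1+l_{fy}\lambda\varepsilon}{1+\lambda\varepsilon}$, then identification of the fixed point's HJBI equation with \eqref{HJB equation-penalized value function}, then uniqueness via the comparison principle of \cite{Huang-Wei-2026}. The paper applies that comparison principle to the monotone driver $f-\frac1\varepsilon\big(\frac y\lambda-\psi_1\big)^++\frac1\varepsilon\big(\frac y\lambda-\psi_2\big)^-$, and its monotonicity is exactly the properness you observe. The differences are cosmetic: you use Lemma~\ref{Pro-A.3} instead of the explicit Girsanov linearization, and you check $l_\varepsilon>0$ from \textbf{(C5)}, which the paper does not; there ``precisely'' should read ``in particular'', since \textbf{(C5)} is sufficient but not necessary for that positivity.
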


\begin{proof}

For any  ${\bf w} \in \mathrm{Lip}_{\e}^b(\mathbb{R}^n) $,  we first consider an
auxiliary game whose lower value function is given by
\begin{equation}\label{auxiliary SCP}
W_{\e}^{{\bf w}}(x)=\essinf\limits_{\b\in \cB_{0,\i}}\esssup\limits_{u\in \cU_{0,\i}}Y_{0}^{\e,{\bf w},x;u,\b(u)}, \q x\in\dbR^n,
\end{equation}
where
 \begin{equation}\label{pBSDE-1}
\begin{aligned}
 Y_{s}^{\e,{\bf w},x;u,v}
&=Y_{T}^{\e,{\bf w},x;u,v}  +  \int_{s}^{T}\(F_\e^{ {\bf w},\l} (X_{r}^{x;u,v},
Z_{r}^{\e,{\bf w},x;u,v},  u_r,v_r)  -  (\lambda+\frac{1}{\e}) Y_{r}^{\e,{\bf w},x;u,v}\)\md r\\
&\qq-\int_{s}^{T}Z_{r}^{\e,{\bf w},x;u,v}\md B_r, \q \forall 0\les s\les T<\i.
\end{aligned}
      \end{equation}
Note that   \eqref{Con-F}, by  Lemma   \ref{LemmaA-1-FBSDE}  with $\t\equiv\i$ and the constant discount
$
 \lambda+\frac{1}{\varepsilon},
$   BSDE \eqref{pBSDE-1} admits a unique solution $(Y^{\e,{\bf w},x;u,v},Z^{\e,{\bf w},x;u,v}) \in L_\dbF^\i(0,\i;\dbR)\times\cH^2_{\mathrm{loc}}(0,\i;\mathbb{R}^d)$, and
$$\ba{ll}
\ns\ds
|Y_{0}^{\e,{\bf w},x;u,v}|\les \frac{ \e  }{\lambda\e+1} \(B_f+ \frac{ B_\e+B_{1}+B_{2}}{\e} \) \les B_\e ,\q \dbP\mbox{-a.s.},\\
\ns\ds |Y_{0}^{\e,{\bf w},x;u,v}-Y_{0}^{\e,{\bf w},\bar x;u,v}|\les \frac{ \e}{\sqrt{(\lambda\e+1)((\lambda+\mu)\e+1)}}\( l_{fx}+ l_{fy}\l l_{\e}  + \frac{l_\varepsilon+l_{1}+l_{2}}{\e}\)|x-\bar x|\\
\ns\ds
\qq\qq\qq\qq\qq
\les l_\varepsilon|x-\bar x|,\  \dbP\mbox{-a.s.},\ea$$
  for all $s\ges 0$, $x,\bar x\in\dbR^n$, and    $u  \in\cU_{0,\i}$, $v  \in\cV_{0,\i}$.
Then, for all   $x,\bar x\in\dbR^n$, we    have
\bel{W-w-Pro}
|W_{\e}^{{\bf w}}(x)  |\les  B_\e,\qq
|W_{\e}^{{\bf w}}(x) -W_{\e}^{{\bf w}}(\bar x)|\les l_\varepsilon|x-\bar x|.
\ee
Furthermore,  by \cite[Theorem 4.1]{Buckdahn-Li-Zhao-2021} or \cite[Corollary 3.10]{Huang-Wei-2026}, $W_{\e}^{{\bf w}} $ in \eqref{auxiliary SCP} is deterministic and is the unique viscosity solution  in $\mathrm{Lip}_{\e}^b(\mathbb{R}^n) $ of the following HJBI equation,
 \begin{equation}\label{HJBI-au}
 \ba{ll}
 \ns\ds
\big(\lambda +\frac 1 \e  \big)W_{\e}^{{\bf w}}(x) -\sup_{u  \in U }\inf_{v \in V}  \big[ \cL^{u,v}W_{\e}^{{\bf w}}(x)    + F_\e^{ {\bf w},\l} (x,   DW_{\e}^{{\bf w}}(x) \si(x,u,v) ,u, v) \big] =0,\q x\in\dbR^n,
   \ea   \end{equation}
 where, for $(u,v)\in U\times V$, $\ds\mathcal L^{u,v}\phi(x):=\frac12\operatorname{tr}\bigl(\sigma\sigma^\top(x,u,v)D^2\phi(x)\bigr)+b(x,u,v)\cdot D\phi(x).$

With the above preparation, \eqref{pBSDE-1} and  \eqref{auxiliary SCP}  define a mapping
$$\cT: \mathrm{Lip}_{\e}^b(\mathbb{R}^n) \to \mathrm{Lip}_{\e}^b(\mathbb{R}^n) \mbox{ with }\cT({\bf w})= W_{\e}^{{\bf w}}.$$
Next, we show that $\cT $ is  contractive. To this end,
for   ${\bf w}_1, {\bf w}_2\in \mathrm{Lip}_{\e}^b(\mathbb{R}^n) $, we set   $ (\widehat Y  ,\widehat Z) :=(Y^{\e,{\bf w}_1,x;u,v}-Y^{\e,{\bf w}_2,x;u,v},Z^{\e,{\bf w}_1,x;u,v}-Z^{\e,{\bf w}_2,x;u,v}) $. Then $(\widehat Y,\widehat Z)$ satisfies, for $0\les s\les T<\infty$,
\bel{widehat Y-1}
\begin{aligned}
\widehat Y_s
&=
\widehat Y_T
+
\int_s^T
\Big[
-\(\lambda+\frac1\varepsilon\)\widehat Y_r
+
F_\varepsilon^{\mathbf w_1,\l}
 \bigl(X_r^{x;u,v},
       Z_r^{\varepsilon,\mathbf w_1,x;u,v},
       u_r,v_r\bigr)
\\
&\qquad\qquad\qquad
-
F_\varepsilon^{\mathbf w_2,\l}
 \bigl(X_r^{x;u,v},
       Z_r^{\varepsilon,\mathbf w_2,x;u,v},
       u_r,v_r\bigr)
\Big]\,\md r
-
\int_s^T \widehat Z_r\,\md B_r .
\end{aligned}
\ee
To linearize the difference with respect to the $z$-variable, we define  the  process $\eta$ by
$$
\eta_r
:=
\begin{cases}
\dfrac{
F_\varepsilon^{\mathbf w_1,\l}
 \bigl(X_r^{x;u,v},
       Z_r^{\varepsilon,\mathbf w_1,x;u,v},
       u_r,v_r\bigr)
-
F_\varepsilon^{\mathbf w_1,\l}
 \bigl(X_r^{x;u,v},
       Z_r^{\varepsilon,\mathbf w_2,x;u,v},
       u_r,v_r\bigr)}
{|\widehat Z_r|^2}\,\widehat Z_r,
& \widehat Z_r\ne 0, \\[2.2ex]
0, & \widehat Z_r=0 .
\end{cases}
$$
By  {\bf(C2)}, we know $\eta$ is bounded. Then
$\ds
\mathcal E_T
:=
\exp\Big\{
\int_0^T \eta_r\,\md B_r
-
\frac12\int_0^T |\eta_r|^2\,\md r
\Big\}
$
is a martingale, so that we  can define an equivalent probability measure $\widetilde{\mathbb P}$
on $\mathcal F_T$ by
$ \ds
\frac{\md\widetilde{\mathbb P}}{\md\mathbb P}
:=
\mathcal E_T .
$
By Girsanov's theorem,
$ \ds
\widetilde B_s
:=
B_s-\int_0^s \eta_r\,\md r,
$ $ 0\les s\les T,
$
is a Brownian motion under $\widetilde{\mathbb P}$.
Consequently, under
$\widetilde{\mathbb P}$, we obtain
 \begin{equation}\label{pBSDE-2}
\widehat Y_s=\widehat Y_T+\int_s^T\(-\big(\lambda+\frac{1}{\varepsilon}\big)\widehat Y_r+\Delta F_r\)\,\md r-\int_s^T \widehat Z_r\,\md\widetilde B_r,
\qquad 0\les s\les T<\infty,
     \end{equation}
where
%
$ \D F_r:=F_\e^{ {\bf w}_1,\l} (X_{r}^{x;u,v},
Z_{r}^{\e,{\bf w}_2,x;u,v},  u_r,v_r)-F_\e^{ {\bf w}_2,\l} (X_{r}^{x;u,v},
Z_{r}^{\e,{\bf w}_2,x;u,v},  u_r,v_r).$
%

Applying the It\^o's formula to $e^{-(\lambda+\frac 1 \varepsilon ) s}\,\widehat Y_s$ yields
 \begin{equation}\label{contraction inequality}
\begin{aligned}
\ba{ll}
\ns\ds |  \widehat Y_{0}|= \Big|\dbE^{\widetilde{\mathbb P}}\[ e^{-(\lambda+\frac{1}{\e}) T }\widehat Y_{T} +\int_0^T e^{-(\lambda+\frac{1}{\e}) s}  \D F_s\, \md s\]\Big|\\
\ns\ds
\q\  \les \dbE^{\widetilde{\mathbb P}}\[ e^{-(\lambda+\frac{1}{\e}) T }|\widehat Y_{T}|  +  \int_0^T e^{-(\lambda+\frac{1}{\e}) s}(\frac{1}{\e} + l_{fy}\l)|{\bf w}_1(X_{s}^{x;u,v})-{\bf w}_2(X_{s}^{x;u,v})|\,\md s\]\\
\ns\ds
\q\  \les  2B_\e  e^{-(\lambda+\frac{1}{\e}) T }  +  \frac{l_{fy}\l\e + 1}{\l \e+1} \Arrowvert{\bf w}_1-{\bf w}_2\Arrowvert_{\infty;\dbR^n}.
\ea
\end{aligned}
      \end{equation}
Letting $T\to\i$ and using the definitions of essential  supremum and infimum, we obtain
$$\Arrowvert  \cT({\bf w_1})-\cT({\bf w_2}) \Arrowvert_{\infty;\dbR^n} =\Arrowvert   W_{\e}^{{\bf w}_1}-W_{\e}^{{\bf w}_2}\Arrowvert_{\infty;\dbR^n}\les \frac{l_{fy}\l\e + 1}{\l \e+1}  \Arrowvert  {\bf w}_1-{\bf w}_2\Arrowvert_{\infty;\dbR^n},  $$
which, along with {\bf(C5)}, yields  the unique ${\bf w}^* \in \mathrm{Lip}_{\e}^b(\mathbb{R}^n) $ such that $ \cT({\bf w}^*)=W_\e^{{\bf w}^\ast}={\bf w}^*$. In particular, since the fixed point
is obtained in the deterministic function space $ \mathrm{Lip}_{\e}^b(\mathbb{R}^n) $,
the function ${\bf w}^*$ is deterministic.
For this choice of ${\bf w}^*$, the auxiliary
BSDE \eqref{pBSDE-1} is exactly the BSDE in the penalized system \eqref{penalized BSDE}, with
$W_\varepsilon$ replaced by ${\bf w}^*$. So, $
W_\varepsilon=W_\e^{{\bf w}^\ast}={\bf w}^* .
$
Then, $(Y^{\e,{\bf w}^*,x;u,v},$ $Z^{\e,{\bf w}^*,x;u,v}, {\bf w}^* )\in L^{\i}_{\mathbb{F}}(0,\i;\dbR)\times \cH^2_{\mathrm{loc}}(0,\i;\mathbb{R}^d)\times \mathrm{Lip}_{\e}^b(\mathbb{R}^n)$ is the unique solution of the system \eqref{penalized BSDE}. Moreover, property (i) follows from \eqref{W-w-Pro}.

 Since $W_\varepsilon=W_\varepsilon^{\mathbf w^*}$, taking
$\mathbf w^*=W_\varepsilon$ in \eqref{HJBI-au}, together with the
definition of $P_\varepsilon^{W_\varepsilon}$, shows that the viscosity
subsolution and supersolution inequalities for \eqref{HJBI-au} coincide
exactly with those for
\eqref{HJB equation-penalized value function}. Hence
$W_\varepsilon$ is a viscosity solution of
\eqref{HJB equation-penalized value function}.

 Finally, for $(x,y,z,u,v)\in
\mathbb R^n\times\mathbb R\times\mathbb R^d\times U\times V$, define
$$
g(x,y,z,u,v)
:=
f(x,y,z,u,v)
-\frac1\varepsilon\left(\frac{y}{\lambda}-\psi_1(x)\right)^+
+\frac1\varepsilon\left(\frac{y}{\lambda}-\psi_2(x)\right)^- .
$$
Then, under assumptions {\bf(C1)}--{\bf(C5)}, the generator $g$ satisfies all
the conditions required in \cite[Corollary 3.10]{Huang-Wei-2026}. With this
choice of $g$ and with the constant discount coefficient $\rho\equiv\lambda$,
the corresponding HJBI equation becomes exactly
\eqref{HJB equation-penalized value function}. Hence the comparison principle
in \cite[Corollary 3.10]{Huang-Wei-2026} implies the uniqueness of bounded
viscosity solutions to \eqref{HJB equation-penalized value function}.
 \end{proof}

\subsection{Convergence of the penalized  value functions}\label{ }

In this subsection, we study the convergence of $W_\varepsilon$ as
$\varepsilon\to0$.

\begin{lemma}\label{penalized value function-uniformly boundness}\sl
Assume {\bf(C1)}-{\bf(C5)} hold true. Then, for all $x\in\dbR^n$, we have
\begin{equation}\label{penalized value function-uniformly boundness-formula}
\mathop{\limsup}\limits_{\e\rightarrow0}W_{\e}(x)\les\psi_1(x)~\mathrm{ and }~\psi_2(x)  \les  \mathop{\liminf}\limits_{\e\rightarrow0}  W_{\e}(x)
.
\end{equation}
\end{lemma}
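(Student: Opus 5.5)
The plan is a direct BSDE estimate. Rewrite the penalized driver with the identity
$P_\varepsilon^{W_\varepsilon}(x)=\frac1\varepsilon\big((W_\varepsilon(x)\vee\psi_2(x))\wedge\psi_1(x)\big)$,
so that in \eqref{penalized BSDE} the only $\varepsilon$-singular part of the driver is $\frac1\varepsilon\,\Pi_\varepsilon(X_r)-\frac1\varepsilon Y_r$. Here $\Pi_\varepsilon:=(W_\varepsilon\vee\psi_2)\wedge\psi_1$ always lies in $[\psi_2,\psi_1]$. The term $\frac1\varepsilon\Pi_\varepsilon$ acts as a strong pull of $Y$ towards a quantity squeezed between the obstacles, with effective discount rate $\lambda+\frac1\varepsilon$. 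The argument proceeds in the three steps below.

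\textbf{Step 1: a uniform bound on $W_\varepsilon$.} The bound $B_\varepsilon$ of Proposition~\ref{lower value function existence} blows up as $\varepsilon\to0$, so I first derive an $\varepsilon$-independent one. By {\bf(C2)} and (i) of Proposition~\ref{lower value function existence}, $|f(x,\lambda W_\varepsilon(x),z,u,v)|\les B_f+l_{fy}\lambda\|W_\varepsilon\|_{\infty;\mathbb R^n}+l_{fz}|z|$. I linearize the $z$-dependence by a bounded Girsanov kernel, as in the proof of Proposition~\ref{lower value function existence}, and apply It\^o's formula to $e^{-(\lambda+1/\varepsilon)s}Y_s$. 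Then letting $T\to\infty$ gives
$|Y_0^{\varepsilon,x;u,v}|\les \frac{\varepsilon(B_f+l_{fy}\lambda\|W_\varepsilon\|_\infty)+B_1+B_2}{\lambda\varepsilon+1}$.
Passing to the essinf/esssup yields
$\|W_\varepsilon\|_\infty\big(1+\lambda\varepsilon(1-l_{fy})\big)\les \varepsilon B_f+B_1+B_2$.
Since $l_{fy}<1$ by {\bf(C5)}, this gives $\|W_\varepsilon\|_\infty\les M:=\varepsilon_0B_f+B_1+B_2$ for all $\varepsilon\in(0,\varepsilon_0]$.

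\textbf{Step 2: one-sided estimate of $Y_0$.} Fix $x$, $u$, $v$. I write the driver as
$f(X_r,\lambda W_\varepsilon(X_r),0,u_r,v_r)+\eta_r Z_r+\frac1\varepsilon\Pi_\varepsilon(X_r)-(\lambda+\frac1\varepsilon)Y_r$, where $|\eta_r|\les l_{fz}$. Changing measure to $\mathbb Q$ with density built from $\eta$ (as in Remark~\ref{rem:measure-representation}), I get
$$Y_0^{\varepsilon,x;u,v}\les \mathbb E^{\mathbb Q}\Big[\int_0^\infty e^{-(\lambda+\frac1\varepsilon)r}\Big(C_0+\frac1\varepsilon\psi_1(X_r^{x;u,v})\Big)\md r\Big],\qquad C_0:=B_f+l_{fy}\lambda M.$$
The reverse inequality, with $\psi_2$ in place of $\psi_1$ and $-C_0$ in place of $C_0$, holds from below.

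\textbf{Step 3: control of the state and conclusion.} Under $\mathbb Q$ the state solves an SDE with drift $b+\sigma\eta$ and diffusion $\sigma$, both bounded by {\bf(C1)}. Hence $\mathbb E^{\mathbb Q}|X_r-x|\les C(r+\sqrt r)$ with $C$ independent of $(u,v,\varepsilon)$. Using $\psi_1(X_r)\les\psi_1(x)+l_1|X_r-x|$ from {\bf(C4)}, I obtain
$$Y_0^{\varepsilon,x;u,v}\les \frac{\varepsilon C_0+\psi_1(x)}{1+\lambda\varepsilon}+\frac{Cl_1}{\varepsilon}\int_0^\infty e^{-r/\varepsilon}(r+\sqrt r)\,\md r\les \frac{\psi_1(x)}{1+\lambda\varepsilon}+C'(\varepsilon+\sqrt\varepsilon),$$
uniformly in $(u,v)$. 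Since the bound is uniform in $(u,v)$, it passes through the essinf over $\beta$ and the esssup over $u$, and $W_\varepsilon$ is deterministic. Therefore $W_\varepsilon(x)\les\frac{\psi_1(x)}{1+\lambda\varepsilon}+C'(\varepsilon+\sqrt\varepsilon)$, and letting $\varepsilon\to0$ gives $\limsup_{\varepsilon\to0}W_\varepsilon(x)\les\psi_1(x)$. The symmetric bound gives $W_\varepsilon(x)\ges\frac{\psi_2(x)}{1+\lambda\varepsilon}-C'(\varepsilon+\sqrt\varepsilon)$, hence $\liminf_{\varepsilon\to0}W_\varepsilon(x)\ges\psi_2(x)$.

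The main obstacle is Step 1: without an $\varepsilon$-uniform bound on $W_\varepsilon$, the term $f(\cdot,\lambda W_\varepsilon,\cdot)$ could be of order $1/\varepsilon$ and destroy the estimate. This is where $l_{fy}<1$ from {\bf(C5)} is used. A secondary point is to make sure the Girsanov constants and moment bounds are uniform in $\varepsilon$, which holds because $|\eta|\les l_{fz}$ independently of $\varepsilon$.
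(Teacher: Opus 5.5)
Your proof is correct, but it takes a different route from the paper.

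\textbf{The paper's route.} The paper mollifies the obstacle, taking a smooth $\psi_1^\delta\ges\psi_1$ with $\psi_1^\delta\to\psi_1$ uniformly. It applies It\^o's formula to $\psi_1^\delta(X_s)$ and studies $Y^{\varepsilon}-\psi_1^\delta(X)$ through a BSDE with discount $\lambda+\frac1\varepsilon$, using $P_\varepsilon^{W_\varepsilon}\les\frac1\varepsilon\psi_1^\delta$. This yields $W_\varepsilon-\psi_1^\delta\les \varepsilon C_\delta/(\lambda\varepsilon+1)$. Here $C_\delta$ bounds $f+\mathcal L^{u,v}\psi_1^\delta-\lambda\psi_1^\delta$ and therefore depends on $D^2\psi_1^\delta$. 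One then lets $\varepsilon\to0$ and afterwards $\delta\to0$.

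\textbf{Your route.} You avoid smoothing altogether. You keep $\frac1\varepsilon\Pi_\varepsilon(X_r)$ inside the Feynman--Kac representation with rate $\lambda+\frac1\varepsilon$. You then use only the Lipschitz bound $\psi_1(X_r)\les\psi_1(x)+l_1|X_r-x|$ and the first-moment estimate $\mathbb E^{\mathbb Q}|X_r-x|\les C(r+\sqrt r)$. That estimate is uniform in the controls, in $\varepsilon$ and in the bounded Girsanov kernel. The weight $\frac1\varepsilon e^{-r/\varepsilon}$ then gives an explicit rate $O(\sqrt\varepsilon)$, uniform in $x$.

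\textbf{What each buys.} Your argument is more elementary and quantitative. The paper's is a barrier or test-function argument; it could also produce a rate by optimizing $\delta$ against $\varepsilon$, but does not state one.

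\textbf{Step 1 is unnecessary.} The ``main obstacle'' you identify does not arise. Assumption \textbf{(C2)} requires $|f(x,y,0,u,v)|\les B_f$ for every $y\in\mathbb R$, not only at $y=0$. Hence $|f(x,\lambda W_\varepsilon(x),z,u,v)|\les B_f+l_{fz}|z|$ however large $W_\varepsilon$ is, and you may take $C_0=B_f$ directly. This is exactly why the paper's $C_\delta$ is independent of $\varepsilon$.

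Your Step 1 is still valid, and the $\varepsilon$-uniform bound it produces is convenient later for the half-relaxed limits. But it is not where $l_{fy}<1$ is needed. In this lemma, \textbf{(C5)} enters only through the existence of $W_\varepsilon$, namely the contraction in Proposition~\ref{lower value function existence}.
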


\begin{proof}
For $\delta>0$, let   ${\widetilde\psi}_1^{\delta}$ be  the standard mollification of $\psi_1$, then define
$$
\psi_1^\delta(x)
:=
\widetilde\psi_1^\delta(x)+\lVert\widetilde\psi_1^\delta-\psi_1\rVert_{\infty;\dbR^n},\q x\in\dbR^n.
$$
Then $ \psi_1^\delta\to\psi_1$ uniformly as $\delta\to0$, and
$
\psi_2(x)\les \psi_1(x)\les  \psi_1^\delta(x), $ $ x\in\dbR^n.
$
Moreover, by {\bf(C4)},  $\psi_1^\d$ is  Lipschitz continuous and
 uniformly bounded with $B_1+\lVert\widetilde\psi_1^\delta-\psi_1\rVert_{\infty;\dbR^n}$.
We also note that the bounds for $D\psi_1^\delta$ and $D^2\psi_1^\delta$ generally depend on $\delta$.

 Applying It$\mathrm{\hat{o}}$'s formula to $\psi_1^{\delta}(X_{s}^{x;u,v})$, we obtain
 \begin{equation*}\label{psi1-equation}
 \begin{aligned}
&\psi_1^{\delta}(X_{s}^{x;u,v})
=\psi_1^{\delta}(X_{T}^{x;u,v}) -\int_{s}^{T} \cL^{u,v} \psi_1^{\delta}(X_{r}^{x;u,v})\, \md r\\
&\qq\qq\qq -\int_{s}^{T} D\psi_1^{\delta}(X_{r}^{x;u,v})  \sigma(X_{ r}^{x;u,v},u_r,v_r)\,\md B_r,\q \forall 0\les s\les T<\i.
\end{aligned}
      \end{equation*}
      %
Setting
$ Y^{1,\e }_s:=Y_{s}^{\e,x;u,v}-  \psi_1^{\delta}(X_{s}^{x;u,v}) , $ $ Z^{1,\e }_s:=Z_{s}^{\e,x;u,v}- D\psi_1^{\delta}(X_{s}^{x;u,v}) \si(X_{s}^{x;u,v},u_s,v_s)    $
 and using  the BSDE in \eqref{penalized BSDE},  we know
    \begin{equation*}\label{Y-psi1-difference-version 1}
\begin{aligned}
 Y_s^{1,\e}
 =&
 Y_T^{1,\e}
+\int_s^T
\Big(
f\bigl(
X_r^{x;u,v}, \lambda W_\varepsilon(X_{r}^{x;u,v}),
Z_r^{1,\e}
+
D\psi_1^\delta(X_r^{x;u,v})
\sigma(X_r^{x;u,v},u_r,v_r),
u_r,v_r
\bigr) \\
&
+\mathcal L^{u,v}\psi_1^\delta(X_r^{x;u,v})
+P_{\varepsilon}^{W_\varepsilon}(X_r^{x;u,v})
-(\l+\frac 1\e)
\psi_1^\delta(X_r^{x;u,v})
-
(\lambda+\frac 1\e) Y_r^{1,\e}
\Big)\md r\\
&
-\int_s^T
 Z_r^{1,\e}\,\md B_r,\  \forall 0\les s\les T<\i .
\end{aligned}
\end{equation*}

 By the linearization argument with respect to $z$ and the associated
Girsanov transform, there exists an equivalent probability measure
$\mathbb P^1$ and a $\mathbb P^1$-Brownian motion $B^1$ such that
$$
\begin{aligned}
&Y_s^{1,\e}
=
Y_T^{1,\e}
+\int_s^T
\Big(
f\bigl(X_r^{x;u,v}, \lambda W_\varepsilon(X_{r}^{x;u,v}),
D\psi_1^\delta(X_r^{x;u,v})\sigma(X_r^{x;u,v},u_r,v_r),
u_r,v_r\bigr)
+\mathcal L^{u,v}\psi_1^\delta(X_r^{x;u,v})\\
&\quad
+P_{\varepsilon}^{W_\varepsilon}(X_r^{x;u,v})
-(\l+\frac1\e)  \psi_1^\delta(X_r^{x;u,v})
-(\lambda+\frac1\e) Y_r^{1,\e}
\Big)\md r
-\int_s^T  Z_r^{1,\e}\,\md B_r^1, \  \forall 0\les s\les T<\i .
\end{aligned}
$$
Setting $\ds
\Theta_{0,s}
:=
\exp\Big\{ -\(\lambda+\frac1\e\)s\Big\},
$
and applying It\^o's formula to
$\Th_{0,s} \, Y_s^{1,\e}$ gives
$$
\begin{aligned}
  Y_0^{1,\e}
&=
\mathbb E^1\[
\Th_{0,T}   Y_T^{1,\e}
+\int_0^T \Th_{0,r}
\Big(
f\bigl(X_r^{x;u,v}, \lambda W_\varepsilon(X_{r}^{x;u,v}),
D\psi_1^\delta(X_r^{x;u,v})\sigma(X_r^{x;u,v},u_r,v_r),
u_r,v_r\bigr))\\
&\qquad\qquad \qq\qq\qq\qq
 +\mathcal L^{u,v}\psi_1^\delta(X_r^{x;u,v} + P_{\varepsilon}^{W_\varepsilon}(X_r^{x;u,v})
-(\l+\frac1\e)
 \psi_1^\delta(X_r^{x;u,v})
\Big)\md r
\]\\
&\les
\mathbb E^1\[
\Th_{0,T}   Y_T^{1, \e}
+\int_0^T \Th_{0,r}
\Big(
f\bigl(X_r^{x;u,v}, \lambda W_\varepsilon(X_{r}^{x;u,v}),
D\psi_1^\delta(X_r^{x;u,v})\sigma(X_r^{x;u,v},u_r,v_r),
u_r,v_r\bigr) \\
&\qquad\qquad\qq\qq\qq\qq
+\mathcal L^{u,v}\psi_1^\delta(X_r^{x;u,v})
-\lambda\psi_1^\delta(X_r^{x;u,v})
\Big)\md r
\].
\end{aligned}
$$
In the above, we have used
  $\ds
P_{\varepsilon}^{W_\varepsilon}(x)
\les
\frac1\varepsilon
 \psi_1^\delta(x)
 ,$ $ x\in\mathbb R^n,
$
which is from $\psi_2\les \psi_1\les \psi_1^\delta$.

By assumptions {\bf(C1)}, {\bf(C2)},  {\bf(C4)}, there exists a constant
 $C_\delta>0$, independent of $\varepsilon$, $T$, $u$ and $v$,
such that
$
\big|
f\bigl(x,\lambda W_\varepsilon(x),D\psi_1^\delta(x)\sigma(x,u,v),u,v\bigr)
+\mathcal L^{u,v}\psi_1^\delta(x)
-\lambda\psi_1^\delta(x)
\big|
\les C_\delta .
$
Hence
$$
Y_0^{1, \e}
\les
\big(B_\e+B_1+\lVert\widetilde\psi_1^\delta-\psi_1\rVert_{\infty;\dbR^n}\big)e^{-(\lambda+\frac1\varepsilon)T}
+
C_\delta\int_0^T e^{-(\lambda+\frac1\varepsilon)r}\,\md r .
$$
Letting $T\to\infty$, we obtain
$\ds
 Y_0^{1, \e}
\les
\frac{\varepsilon C_\delta}{\lambda\varepsilon+1}.
$
Then, for $x\in\dbR^n$, we have
\begin{equation*}\label{W-e-psi-1}
\begin{aligned}
W_\varepsilon(x)-\psi_1^\delta(x)
&=
\essinf_{\beta\in\mathcal B_{0,\infty}}
\esssup_{u\in\mathcal U_{0,\infty}}
\big(
 Y_0^{ \e,x;u,\beta(u)}
-\psi_1^\delta(x)
\big)  =
\essinf_{\beta\in\mathcal B_{0,\infty}}
\esssup_{u\in\mathcal U_{0,\infty}}
 Y_0^{1, \e}
\les
\frac{\varepsilon C_\delta}{\lambda\varepsilon+1}.
\end{aligned}
\end{equation*}
Letting $\e\to0$ and then $\delta\to0$, we obtain
 $\mathop{\limsup}\limits_{\e\rightarrow0}W_{\e}(x)\les\psi_1(x)$, $x\in\dbR^n$.

For the remaining inequality, we choose a smooth approximation $\psi_2^\delta$ such that
$\psi_2^\delta\les \psi_2\les\psi_1$ and
$\psi_2^\delta\to\psi_2$ uniformly
 as  $\delta\to0.
$
Since
$ \ds
P_\varepsilon^{W_\varepsilon}(x)
\ges \frac1\varepsilon\psi_2^\delta(x),
$ $ x\in\mathbb R^n,
$
repeating the above argument for
$Y_s^{\varepsilon,x;u,v}-\psi_2^\delta(X_s^{x;u,v})$ yields
$\ds
W_\varepsilon(x)-\psi_2^\delta(x)
\ges
-\frac{\varepsilon C_\delta}{\lambda\varepsilon+1},$ $ x\in\dbR^n.
$
Letting $\varepsilon\to0$ and then $\delta\to0$, we obtain
$ \ds
\psi_2(x)\les
\liminf_{\varepsilon\to0}W_\varepsilon(x),
$ $x\in\mathbb R^n.
$

 \end{proof}

\begin{proposition}\label{convergence-VI-HJB}\sl
Under assumptions {\bf(C1)}-{\bf(C5)}, the unique viscosity solution $W_{\e}(\cd)$ of  \eqref{HJB equation-penalized value function} converges locally uniformly to a continuous function $W(\cd)\in C_b(\mathbb{R}^n)$ as $\e\rightarrow0$.
Moreover, the limit  $W(\cd)$ is the unique viscosity solution of the  variational inequality \eqref{VI-}, and satisfies $\psi_2\les W\les\psi_1$.
\end{proposition}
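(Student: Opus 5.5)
The plan is the Barles--Perthame method of half-relaxed limits, followed by the comparison principle for \eqref{VI-}. A direct Arzel\`a--Ascoli argument is not available, because the bounds $B_\e$ and $l_\e$ in Proposition \ref{lower value function existence} blow up as $\e\to0$. So I would first secure an $\e$-uniform bound. The proof of Lemma \ref{penalized value function-uniformly boundness} actually gives, uniformly in $y\in\mathbb R^n$,
$$
\psi_2^\delta(y)-\frac{\e C_\delta}{\lambda\e+1}\les W_\e(y)\les \psi_1^\delta(y)+\frac{\e C_\delta}{\lambda\e+1}.
$$
Hence, for $\e\les 1$, $\sup_\e\lVert W_\e\rVert_{\infty;\dbR^n}<\infty$, and the following half-relaxed limits are well defined and bounded:
$$
\overline W(x):=\limsup_{\e\to0,\,y\to x}W_\e(y),\qquad \underline W(x):=\liminf_{\e\to0,\,y\to x}W_\e(y).
$$
Here $\overline W\in\mathrm{USC}$, $\underline W\in\mathrm{LSC}$ and $\underline W\les\overline W$. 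Since the displayed bounds hold uniformly in $y$ and $\psi_i^\delta\to\psi_i$ uniformly, letting $\e\to0$ and then $\delta\to0$ gives $\psi_2\les\underline W\les\overline W\les\psi_1$.

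Next I would show that $\overline W$ is a viscosity subsolution of \eqref{VI-} in the sense of Definition \ref{VI-viscosity solution}. Let $\f\in C^3_{l,b}$ and let $\overline W-\f$ have a local maximum at $x_0$, which I make strict by the usual quartic perturbation. If $\overline W(x_0)\les\psi_2(x_0)$ the required inequality holds trivially. Otherwise, the standard stability argument gives $\e_k\to0$ and local maximum points $x_k\to x_0$ of $W_{\e_k}-\f$ with $W_{\e_k}(x_k)\to\overline W(x_0)$. By continuity of $\psi_2$, we have $W_{\e_k}(x_k)>\psi_2(x_k)$ for large $k$, so the term $-\frac1\e(W_\e-\psi_2)^-$ in \eqref{HJB equation-penalized value function} vanishes at $x_k$. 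The subsolution property of $W_{\e_k}$ (Proposition \ref{lower value function existence}(ii)), together with $\frac1\e(\cdot)^+\ges0$, then gives
$$
\lambda W_{\e_k}(x_k)-H^-\big(x_k,\lambda W_{\e_k}(x_k),D\f(x_k),D^2\f(x_k)\big)\les0.
$$
$H^-$ is continuous in $(x,y,p,X)$ by {\bf(C1)}--{\bf(C2)} and compactness of $U,V$, so letting $k\to\infty$ yields $\lambda\overline W(x_0)-H^-(x_0,\lambda\overline W(x_0),D\f(x_0),D^2\f(x_0))\les0$. Hence the max-term is $\les0$ and the subsolution inequality holds. The supersolution property of $\underline W$ is symmetric: if $\underline W(x_0)\ges\psi_1(x_0)$ the max-term is $\ges0$ and we are done, since $\underline W\ges\psi_2$. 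Otherwise the $(\cdot)^+$ penalty vanishes along the minimizing sequence, and we get $\lambda\underline W-H^-\ges0$ together with $\underline W-\psi_2\ges0$.

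Finally I would apply comparison to get $\overline W\les\underline W$. Then $W:=\overline W=\underline W$ is continuous, and the standard consequence of the half-relaxed limit construction is that $W_\e\to W$ locally uniformly. Moreover $W$ is bounded, lies between $\psi_2$ and $\psi_1$, and is a viscosity solution of \eqref{VI-}; by Theorem \ref{HJBI-VI-solution} it is the unique one in $C_b(\mathbb R^n)$ and coincides with $\widetilde W^-$.

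The main obstacle is this comparison step. Theorem \ref{HJBI-VI-solution} states comparison only for sub- and supersolutions in $C_b(\mathbb R^n)$, whereas $\overline W$ and $\underline W$ are only semicontinuous. I would handle this by passing, via Remark \ref{VI-viscosity solution-equ} and the identity $\widetilde H=[H^-\wedge\lambda\psi_1]\vee\lambda\psi_2$ from the proof of Theorem \ref{HJBI-VI-solution}, to the obstacle-free equation \eqref{extend HJB equation}. I would then invoke (or reprove by Ishii's lemma with the doubling penalty $|x-y|^2/\alpha$ plus a localization term $\gamma(1+|x|^2)^{1/2}$) the comparison principle for bounded USC subsolutions and LSC supersolutions of that equation. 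This works because the comparison proofs in \cite{Huang-Wei-2026,Buckdahn-Li-Zhao-2021} use continuity of the sub- and supersolutions only through semicontinuity, together with the strict monotonicity in $W$ coming from $\lambda>0$ and the monotonicity of $f$ in $y$ in {\bf(C2)}.
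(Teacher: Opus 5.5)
Your proposal is correct and follows essentially the same route as the paper: half-relaxed limits trapped between $\psi_2$ and $\psi_1$ by the $\varepsilon$-uniform estimates from the proof of Lemma~\ref{penalized value function-uniformly boundness}, a case split on whether the limit touches $\psi_2$ (respectively $\psi_1$) so that the other penalty term drops out along the approximating sequence, and comparison from Theorem~\ref{HJBI-VI-solution} to get $\overline W\le\underline W$. You also point out that comparison has to be applied to the merely semicontinuous functions $\overline W$ and $\underline W$, whereas Theorem~\ref{HJBI-VI-solution} is stated only in $C_b(\mathbb R^n)$, and you supply the fix through the obstacle-free equation \eqref{extend HJB equation}; this makes explicit a step the paper's proof passes over silently.
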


\begin{proof}
We begin by defining the following  lower and upper half-limits, for $x\in\dbR^n$,
 \begin{equation}\label{v+-definition}
W_\ast (x):=\mathop{\liminf}\limits_{(y,\e)\rightarrow(x,0)}W_{\e}(y),\q W^\ast (x):=\mathop{\limsup}\limits_{(y,\e)\rightarrow(x,0)}W_{\e}(y).
\end{equation}
 By the uniform estimates  obtained in the proof of Lemma  \ref{penalized value function-uniformly boundness}, these
half-relaxed limits are finite-valued and satisfy
\bel{psi-W+-}\psi_2\les W_\ast \les W^\ast \les\psi_1.\ee

We now show that    $W^\ast $ is a viscosity subsolution of \eqref{VI-}.   To this end, take any test function
$\varphi\in C_{l,b}^{3}(\mathbb{R}^n)$  such that $W^\ast -\varphi$ attains a strict local maximum at a point $y$. Without loss of generality, we assume $W^\ast (y)=\varphi(y)$.

Combined with \eqref{v+-definition}, we know there exists some sequence $\{(\e_n, y_n)\}_{n\ges 1}$ with $\e_n > 0$, $\e_n \to 0$, and $y_n \to y$ as $n\to\i$, such that, after passing to a subsequence if necessary, the following hold

(i)  $W_{\e_n}-\varphi\les W_{\e_n}(y_n)-\varphi(y_n)$   in a neighbourhood of $y_n$, for all $n\ges 1$;

(ii)  $W_{\e_n}(y_n)\to W^\ast (y)$, as  $n\to\i$.
\\
Since $W_{\e}$ is a viscosity subsolution of \eqref{HJB equation-penalized value function},  the following inequality holds,
\bel{W-e-sub}\ba{ll}
\ns\ds
 \lambda W_{{\e_n}}(y_n)+\frac{1}{{\e_n}}\big(W_{{\e_n}}(y_n)-\psi_1(y_n)\big)^{+}
-\frac{1}{\e_n}\big(W_{\e_n}(y_n)-\psi_2(y_n)\big)^{-}\\
\ns\ds
-H^{-}\big(y_n, \lambda W_{{\e_n}}(y_n), D\varphi(y_n),D^2\varphi(y_n)\big)\les0.
\ea\ee
%
Then we claim that,   letting $n\to\i$ yields
\bel{W+-sub}
\min\Big\{ \max\{ \lambda W^\ast (y)
-H^{-}\big(y, \lambda W^\ast(y), D\varphi(y),D^2\varphi(y)\big), W^\ast (y)-\psi_1(y)\}, W^\ast (y)-\psi_2(y)\Big\}\les0,
\ee
which  implies $W^\ast $ to be  a viscosity subsolution of \eqref{VI-}.

To justify the limit, we consider two cases based on \eqref{psi-W+-}, we discuss  two cases: (a) $W^\ast (y)-\psi_2(y)=0$; and  (b) $W^\ast (y)-\psi_2(y)>0$.

 In case (a), \eqref{W+-sub} holds trivially.
For case (b),   we have some positive integer   $N$ such that $ W_{\e_n}(y_n)-\psi_2(y_n)>0$ whenever $n>N$. Consequently,  when $n>N,$ \eqref{W-e-sub} simplifies to
\bel{W-e-sub-2}
 \lambda W_{{\e_n}}(y_n)-H^{-}(y_n,\lambda W_{{\e_n}}(y_n),D\varphi(y_n),D^2\varphi(y_n))\les  -\frac{1}{{\e_n}}(W_{{\e_n}}(y_n)-\psi_1(y_n))^{+}
\les 0.
\ee
Letting $n\to\infty$ yields
$$
\lambda W^*(y)-H^-(y, \lambda W^\ast(y), D\varphi(y),D^2\varphi(y))\les0.
$$
Together with $W^*(y)\les\psi_1(y)$ from  \eqref{psi-W+-}, this gives \eqref{W+-sub}.

A similar analysis shows that  $W_\ast $ is a viscosity supersolution of \eqref{VI-}.
By the comparison principle in  Theorem
 \ref{HJBI-VI-solution}, we know $W^\ast \les W_\ast $.
 Thus, $W:=W^\ast =W_\ast  $ is continuous and  the    viscosity solution of \eqref{VI-}.   Moreover, it satisfies $\psi_2\les W\les\psi_1$.
The local uniform convergence
$W_\varepsilon\to W$ on $\mathbb R^n$ follows from the standard
half-relaxed limits argument. Finally, the uniqueness follows from
 Theorem   \ref{HJBI-VI-solution}.
\end{proof}

\subsection{Dynamic programming principle for the  penalized SDG}

To identify the viscosity solution obtained in Proposition \ref{convergence-VI-HJB}  with the lower
value function $W^-$ in \eqref{LU-VF}, we need a DPP for the penalized
SDG.   Since such a DPP involves the value function evaluated at intermediate
states at stopping times,   it is necessary to extend the stochastic
representation of $W_\varepsilon$ from deterministic initial states to random
initial states.
To this end,
for $(t,x)\in [0,\i)\times\dbR^n$, $u(\cd)\in\cU_{t,\i}$,  $v(\cd)\in\cV_{t,\i}$,  with  $W_\e$ defined  in \eqref{penalized BSDE}, we consider the following infinite horizon BSDE,
     \begin{equation}\label{penalized BSDE-auxiliary 3}
  \ba{ll}
 \ns\ds \cY_{s}^{\e,t,x;u,v}
=\cY_{T}^{\e,t,x;u,v} +\int_s^T \big( f(X_{r}^{t,x;u,v},\l\cY_r  ^{\e,t,x;u,v}, \cZ_r  ^{\e,t,x;u,v} ,u_r,v_r)   + Q^{W_\e}_{\e}(X_{r}^{t,x;u,v})    -   \l  \cY_r ^{\e,t,x;u,v}\big)\,\md r\\
 \ns\ds \qquad\qquad\q
-\int_s^T\cZ_r^{\e,t,x;u,v}  \md  B_r, \q  t\les s\les T <\i,
    \ea
 \end{equation}
 where $X_s^{t,x;u,v}$ denotes the solution of \eqref{state} starting
from $x$ at time $t$, and
\bel{P-1}\ds  Q^{W_\e}_{\e}(x ):=
-\frac{1}{\e}\big(W_\e(x)-\psi_1(x)\big)^{+}
+\frac{1}{\e}\big(W_\e(x)-\psi_2(x)\big)^{-},
\q x\in\dbR^n.
\ee
Under assumptions {\bf(C1)}-{\bf(C5)}, the BSDE \eqref{penalized BSDE-auxiliary 3} is
well-posed. Then, we define the corresponding cost functional by
  $J_\varepsilon(t,x;u,v):=\cY_{t}^{\e,t,x;u,v}$.
The lower value function associated with \eqref{penalized BSDE-auxiliary 3} is then given by
\bel{cW-kappa}
\mathcal W_\varepsilon(t,x)
:=
\essinf_{\beta\in\mathcal B_{t,\infty}}
\esssup_{u\in\mathcal U_{t,\infty}}
 \cY_t^{\varepsilon,t,x;u,\beta(u)},
\qquad
(t,x)\in[0,\infty)\times\mathbb R^n .
\ee

Applying  \cite[Theorem 3.5]{Huang-Wei-2026}  to the above infinite horizon SDG,
we know that $\mathcal W_\varepsilon$ is deterministic and is a bounded
viscosity solution of the HJBI equation associated with
\eqref{penalized BSDE-auxiliary 3}. Since the coefficients are
time-homogeneous and the discount coefficient is constant,
\cite[Corollary 3.10]{Huang-Wei-2026} yields that this non-autonomous HJBI
equation admits a stationary reduction. Moreover, the resulting stationary
equation is precisely \eqref{HJB equation-penalized value function}. Hence, by
the uniqueness of the viscosity solution,
\bel{W-e-cY-t}
\mathcal W_\varepsilon(t,x)=W_\varepsilon(x),
\qquad
(t,x)\in[0,\infty)\times\mathbb R^n .
\ee
Although the dependence on $t$ is only notational, we keep the explicit time
variable for later use in the dynamic programming argument.

\begin{lemma}\label{Le-W-e-cY-ran}\sl  Under assumptions  {\bf(C1)}-{\bf(C5)},  for any  $\rho\in\cS$  with $\rho<\i$, $\dbP$-a.s., and  any $\mathcal F_\rho$-measurable $\mathbb R^n$-valued random variable $\xi$ with $|\xi|<\infty$, $\mathbb P$-a.s., we have
     \bel{W-e-cY-random}W_\e(\xi)=\cW_\e(\rho,\xi)=
\essinf_{\beta\in\mathcal B_{\rho,\infty}}
\esssup_{u\in\mathcal U_{\rho,\infty}}
\mathcal Y_{\rho}^{\varepsilon,\rho,\xi;u,\beta(u)},
\qquad \mathbb P\text{-a.s.}\ee
       \end{lemma}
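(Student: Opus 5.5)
The plan is to extend \eqref{W-e-cY-t} from deterministic $(t,x)$ to random $(\rho,\xi)$. I will use the standard approximation scheme of Buckdahn--Li \cite{Buckdahn-Li-2008}: first treat simple random variables, then pass to the limit using uniform Lipschitz estimates. The first equality $W_\varepsilon(\xi)=\mathcal W_\varepsilon(\rho,\xi)$ is just \eqref{W-e-cY-t} evaluated pathwise, since $\mathcal W_\varepsilon$ is deterministic and does not depend on $t$. The work lies in the second equality.

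\emph{Step 1: the case where $\rho$ takes finitely many values and $\xi$ is a simple random variable.} Suppose $\rho=\sum_{j}t_j\mathbf 1_{\Gamma_j}$ and $\xi=\sum_{i}x_i\mathbf 1_{A_i}$, with $\{A_i\}$ an $\mathcal F_\rho$-measurable partition.
\begin{itemize}
\item For a fixed pair $(u,\beta)$, uniqueness of the infinite horizon BSDE \eqref{penalized BSDE-auxiliary 3} gives $\mathcal Y_\rho^{\varepsilon,\rho,\xi;u,\beta(u)}=\sum_{i,j}\mathbf 1_{A_i\cap\Gamma_j}\mathcal Y_{t_j}^{\varepsilon,t_j,x_i;u,\beta(u)}$. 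The localization here uses that $\mathbf 1_{A_i\cap\Gamma_j}$ is $\mathcal F_{t_j}$-measurable on $\Gamma_j$.
\item To get ``$\les$'', take for each $(i,j)$ an $\epsilon$-optimal strategy $\beta_{ij}\in\mathcal B_{t_j,\infty}$ for $\mathcal W_\varepsilon(t_j,x_i)$. Paste them as $\beta(u)=\sum\mathbf 1_{A_i\cap\Gamma_j}\beta_{ij}(u)$, which is nonanticipative on $\llbracket\rho,\infty\rrbracket$.
\item Any $u\in\mathcal U_{\rho,\infty}$ restricts on each $A_i\cap\Gamma_j$ to a control from $t_j$. By the independence of the increments of $B$ after $t_j$ from $\mathcal F_{t_j}$, as in \cite{Buckdahn-Li-2008}, the value $\mathcal W_\varepsilon(t_j,x_i)$ is deterministic. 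Together with the usual lattice property of the families $\{Y\}$ (closedness under pasting on $\mathcal F_\rho$-sets), this gives $\operatorname{esssup}_u\mathcal Y\le\sum\mathbf 1_{A_i\cap\Gamma_j}(\mathcal W_\varepsilon(t_j,x_i)+\epsilon)$.
\item The reverse inequality follows symmetrically: given any $\beta\in\mathcal B_{\rho,\infty}$, construct $\epsilon$-optimal responses $u$ by pasting on the same partition.
\end{itemize}
This yields \eqref{W-e-cY-random} in the simple case. Since the time dependence is only notational, I only need $\mathcal W_\varepsilon(t_j,x_i)=W_\varepsilon(x_i)$.

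\emph{Step 2: general $\xi$.} Approximate $\xi$ by simple $\mathcal F_\rho$-measurable $\xi_k\to\xi$ a.s. with $|\xi_k-\xi|\le 1/k$, which is possible since $|\xi|<\infty$ a.s. Lemma~\ref{LemmaA-1-FBSDE} applies to \eqref{penalized BSDE-auxiliary 3} with driver $f+Q^{W_\varepsilon}_\varepsilon$, which is Lipschitz in $x$ because $W_\varepsilon\in\mathrm{Lip}^b_\varepsilon$ and $\psi_i$ are Lipschitz. Using (C3) and discount $\lambda$, and run from the random initial time $\rho$ (the estimate being conditional and uniform in $(u,v)$), it gives
\[
|\mathcal Y_\rho^{\varepsilon,\rho,\xi_k;u,v}-\mathcal Y_\rho^{\varepsilon,\rho,\xi;u,v}|\le C_\varepsilon|\xi_k-\xi|,
\]
with $C_\varepsilon$ independent of the controls. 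Hence the essinf--esssup on the right of \eqref{W-e-cY-random} moves by at most $C_\varepsilon/k$, while $W_\varepsilon(\xi_k)\to W_\varepsilon(\xi)$ by Lipschitz continuity. Passing $k\to\infty$ finishes this case.

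\emph{Step 3: general $\rho<\infty$.} Approximate $\rho$ from above by dyadic stopping times $\rho_m\downarrow\rho$. This is the step I expect to be the main obstacle, because the strategy classes $\mathcal B_{\rho,\infty}$ and $\mathcal B_{\rho_m,\infty}$ differ, and one must control $\mathcal Y$ on $\llbracket\rho,\rho_m\rrbracket$ uniformly over controls.

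I would try the following. On $\llbracket\rho,\rho_m\rrbracket$, compare the value started at $(\rho,\xi)$ with the one started at $(\rho_m,X_{\rho_m}^{\rho,\xi;u,v})$. The BSDE over the short interval changes $\mathcal Y$ by at most $C(\rho_m-\rho)$, because the driver and $\mathcal Y$ are bounded (the bound depends on $\varepsilon$, which is fixed). The state moves by $\mathbb E[|X_{\rho_m}-\xi|^2\mid\mathcal F_\rho]^{1/2}\le C(\rho_m-\rho)^{1/2}$, since $b,\sigma$ are bounded.

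Combining Step 2 at time $\rho_m$ with the Lipschitz continuity of $W_\varepsilon$ then gives both inequalities up to $C_\varepsilon 2^{-m/2}$. Strategies in $\mathcal B_{\rho,\infty}$ are restricted or extended to $\mathcal B_{\rho_m,\infty}$ by freezing $v_0$ on $\llbracket\rho,\rho_m\rrbracket$, which costs only $O(\rho_m-\rho)$ by the same estimates. Letting $m\to\infty$ completes the proof.
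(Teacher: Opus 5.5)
Your route differs from the paper's. The paper does not re-derive the random-initial-data identity itself. For bounded $\rho$ and $\xi\in L^2_{\mathcal F_\rho}$ it imports the identity from an approximation argument as in Li--Li--Wei. Its own work is a localization step: it truncates with $\rho_m=\rho\wedge m$, $A_m=\{\rho\les m,\ |\xi|\les m\}$, $\xi_m=\xi\mathbf 1_{A_m}+x_0\mathbf 1_{A_m^c}$, checks $A_m\in\mathcal F_{\rho_m}$, shows by explicit pasting of strategies in both directions that the two essinf--esssup values coincide on $A_m$, and lets $A_m\uparrow\Omega$. The identification $\cW_\e(\rho,\xi)=W_\e(\xi)$ is then pathwise, exactly as in your opening remark. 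You instead give a self-contained Buckdahn--Li-type argument: discrete data, then Lipschitz approximation in $\xi$, then dyadic approximation in $\rho$. Your estimates are conditional on $\mathcal F_\rho$ and uniform in the controls, and $\rho_m-\rho\les 2^{-m}$ holds pathwise whether or not $\rho$ is bounded. So your argument covers unbounded $\rho$ and merely a.s.\ finite $\xi$ with no truncation. The paper's route is shorter and modular, but it rests entirely on the cited bounded case.

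Several points need repair.

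\emph{Countable partitions.} Step 1 must be stated for countable partitions. An unbounded $\xi$ cannot be approximated within $1/k$ by finitely-valued variables, and the dyadic approximations of an unbounded $\rho$ take countably many values. Countable pasting works verbatim.

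\emph{The upper bound in Step 3.} This is the substantive one. To show $\essinf_{\beta}\esssup_{u}\cY_\rho^{\e,\rho,\xi;u,\beta(u)}\les W_\e(\xi)+o(1)$, you cannot continue from $(\rho_m,X_{\rho_m}^{\rho,\xi;u,v_0})$ with a strategy that is near-optimal for that pair. This state depends on $u$, so you would need a nonanticipative selection in $u$, which your sketch does not supply. The fix is as follows. Take $\beta'\in\cB_{\rho_m,\infty}$ that is $\delta$-optimal for the fixed, $u$-independent pair $(\rho_m,\xi)$; note that $\xi$ is $\mathcal F_{\rho_m}$-measurable. Extend $\beta'$ by $v_0$ on $\llbracket\rho,\rho_m\rrbracket$. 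Then pay $C_\e|X_{\rho_m}^{\rho,\xi;u,v_0}-\xi|$ through your control-uniform Lipschitz estimate from Step 2.

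\emph{The short-interval estimate.} The driver is not bounded, since it is linear in $z$. So the bound should be done after Girsanov linearization: $|\cY_\rho-\dbE^{\dbQ}[\cY_{\rho_m}\mid\mathcal F_\rho]|\les C_\e 2^{-m}$. The displacement $\dbE^{\dbQ}[|X_{\rho_m}-\xi|\mid\mathcal F_\rho]\les C2^{-m/2}$ should be estimated under the same $\dbQ$, whose kernel is bounded by $l_{fz}$.

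\emph{The lower bound.} Restricting $\beta\in\cB_{\rho,\infty}$ to $\cB_{\rho_m,\infty}$ means freezing Player~1's control at $u_0$ on $\llbracket\rho,\rho_m\rrbracket$, not $v_0$. With that change, the state at $\rho_m$ does not depend on the continuation control, and your argument goes through.
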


\begin{proof}
\emph{Step 1}.
We first prove the second equality. Let $\rho\in\cS$ be bounded and let $\xi\in L^2_{\mathcal F_\rho}(\Omega;\mathbb R^n)$. Then by an approximation argument similar to that in
\cite[Proposition 3.16]{Li-Li-Wei-2021}, one obtains the following identity: \begin{equation}\label{eq:auxiliary-DPP-backward-semigroup} \cW_{\e}(\rho,\xi) = \mathop{\mathrm{essinf}}\limits_{\b\in \cB_{\rho,\i }} \mathop{\mathrm{esssup}}\limits_{u\in \cU_{\rho,\i}} \cY_\rho^{\varepsilon,\rho,\xi;u,\beta(u)}, \q \mathbb P\mbox{-a.s.} \end{equation}
   It remains to extend \eqref{eq:auxiliary-DPP-backward-semigroup} to the case where $\rho$ is unbounded but finite $\mathbb P$-a.s., and $\xi$ is $\mathcal F_\rho$-measurable with $|\xi|<\infty$, $\mathbb P$-a.s.

Fix $x_0\in\mathbb R^n$. For $m\ges1$, define
$
\rho_m:=\rho\wedge m,$ $
A_m:=\{\rho\les m,\ |\xi|\les m\},
$
and
$
\xi_m:=\xi{\bf1}_{A_m}+x_0{\bf1}_{A_m^c}.
$
Since $\xi$ is $\mathcal F_\rho$-measurable, we have $ A_m\in\mathcal F_\rho. $ Moreover, $A_m\in\mathcal F_{\rho_m}$.
 Indeed, for $t<m$, $ A_m\cap\{\rho_m\les t\} = \{|\xi|\les m\}\cap\{\rho\les t\}\in\mathcal F_t, $ while for $t\ges m$, $ A_m\cap\{\rho_m\les t\} = A_m = \{|\xi|\les m\}\cap\{\rho\les m\}\in\mathcal F_m\subset\mathcal F_t. $ The same criterion also shows that $\xi{\bf1}_{A_m}$ is $\mathcal F_{\rho_m}$-measurable. Hence $ \xi_m\in L^\infty_{\mathcal F_{\rho_m}}(\Omega;\mathbb R^n) \subset L^2_{\mathcal F_{\rho_m}}(\Omega;\mathbb R^n). $ Furthermore, $ (\rho_m,\xi_m)=(\rho,\xi)$ on $A_m. $ Since $\rho<\infty$ and $|\xi|<\infty$, $\dbP$-a.s., we have $ A_m\uparrow\Omega $ as $m\to\infty$, up to a $\dbP$-null set.
Therefore, applying \eqref{eq:auxiliary-DPP-backward-semigroup} to $(\rho_m,\xi_m)$, we have
\begin{equation}\label{eq:DPP-random-truncated}
\mathcal W_\varepsilon(\rho_m,\xi_m)
=
\essinf_{{  \beta}\in\mathcal B_{\rho_m,\infty}}
\esssup_{u\in\mathcal U_{\rho_m,\infty}}
\mathcal Y_{\rho_m}^{\varepsilon,\rho_m,\xi_m;u,\beta(u)},
\qquad \dbP\text{-a.s.}
\end{equation}

Next, we claim that
\begin{equation}\label{eq:DPP-random-locality}{\bf1}_{A_m}
\essinf_{\hat{\beta}\in\mathcal B_{\rho_m,\infty}}
\esssup_{\hat{u}\in\mathcal U_{\rho_m,\infty}}
\mathcal Y_{\rho_m}^{\varepsilon,\rho_m,\xi_m;\hat{u},\hat{\beta}(\hat{u})}
  =
{\bf1}_{A_m}
\essinf_{\beta\in\mathcal B_{\rho,\infty}}
\esssup_{u\in\mathcal U_{\rho,\infty}}
\mathcal Y_{\rho}^{\varepsilon,\rho,\xi;u,\beta(u)} .
\end{equation}
Fix $u_0\in U$ and $v_0\in V$. Let
$\hat{\beta}\in\mathcal B_{\rho_m,\infty}$ be fixed. For
$u\in\mathcal U_{\rho,\infty}$, define an extension $\tilde u$ of $u$ on
$[0,\infty)$ by $
\tilde u_s:=u_0{\bf1}_{\{s<\rho\}}+u_s{\bf1}_{\{s\ges\rho\}},
$ $ s\ges0.
$
Then $\tilde u\in\mathcal U_{0,\infty}$.
Define
$\bar u\in\mathcal U_{\rho_m,\infty}$ by
$
\bar u_s:=\tilde u_s{\bf1}_{A_m}+u_0{\bf1}_{A_m^c},
$ $ s\ges \rho_m.
$
Next, define a map $\beta:\mathcal U_{\rho,\infty}\to
\mathcal V_{\rho,\infty}$ by
$$
\beta(u)_s:
=
\hat{\beta}(\bar u)_s{\bf1}_{A_m}+v_0{\bf1}_{A_m^c},
\qquad s\ges \rho .
$$
Since $A_m\in\mathcal F_{\rho_m}\cap\mathcal F_\rho$, the processes
$\bar u$ and $\beta(u)$ are admissible on $\llbracket\rho_m,\infty\rrbracket$ and
$\llbracket\rho,\infty\rrbracket$, respectively.
We now verify that $\beta$ is nonanticipative.
Let
$S\in\mathcal S$ with $S\ges\rho$, $\dbP$-a.s.,
and let $u^1,u^2\in\mathcal U_{\rho,\infty}$ satisfy
$
u^1\equiv u^2
$ on $\llbracket\rho,S\rrbracket .
$
Then the corresponding
controls $\bar u^1,\bar u^2$ coincide on $\llbracket\rho_m,S\rrbracket$: on $A_m$ we have
$\rho_m=\rho$ and
$\bar u^i=u^i$, $i=1,2$, while on $A_m^c$ both controls are equal to $u_0$. Since
$\hat{\beta} $ is nonanticipative, $
\hat{\beta}(\bar u^1)
\equiv
\hat{\beta}(\bar u^2)
$ on $\llbracket\rho_m,S\rrbracket .
$
Consequently,
$
\beta(u^1)\equiv\beta(u^2)
$ on $\llbracket\rho,S\rrbracket .
$
This proves that $\beta\in\mathcal B_{\rho,\infty}$.

On $A_m$, we have $\rho_m=\rho$, $\xi_m=\xi$, and, on
$\llbracket\rho,\infty\rrbracket$, $\bar u=u$ and $\hat{\beta}(\bar u)=\beta(u)$.
 Hence, by the   uniqueness of the
controlled SDE and BSDE,
$$
{\bf1}_{A_m}
\mathcal Y_{\rho}^{\varepsilon,\rho,\xi;u,\beta(u)}   ={\bf1}_{A_m}
\mathcal Y_{\rho_m}^{\varepsilon,\rho_m,\xi_m;\bar u,\hat{\beta}(\bar u)}\les
{\bf1}_{A_m}
\esssup_{\hat{u}\in\mathcal U_{\rho_m,\infty}}
\mathcal Y_{\rho_m}^{\varepsilon,\rho_m,\xi_m;\hat{u},\hat{\beta}(\hat{u})}
.
$$
Taking the essential supremum over $ {u}\in\mathcal U_{\rho,\infty}$ yields
$$
{\bf1}_{A_m}
\esssup_{u\in\mathcal U_{\rho,\infty}}
\mathcal Y_{\rho}^{\varepsilon,\rho,\xi;u,\beta(u)}
\les
{\bf1}_{A_m}
\esssup_{\hat{u}\in\mathcal U_{\rho_m,\infty}}
\mathcal Y_{\rho_m}^{\varepsilon,\rho_m,\xi_m;\hat{u},\hat{\beta}(\hat{u})}.
$$
Since the above inequality holds for the strategy $\beta$ constructed from the
fixed $\hat{\beta}$, we obtain
$$
{\bf1}_{A_m}
\essinf_{\beta\in\mathcal B_{\rho,\infty}}
\esssup_{u\in\mathcal U_{\rho,\infty}}
\mathcal Y_{\rho}^{\varepsilon,\rho,\xi;u,\beta(u)}
\les
{\bf1}_{A_m}
\esssup_{\hat{u}\in\mathcal U_{\rho_m,\infty}}
\mathcal Y_{\rho_m}^{\varepsilon,\rho_m,\xi_m;\hat{u},\hat{\beta}(\hat{u})}.
$$
Then by the arbitrariness of  $\hat{\beta}\in\mathcal B_{\rho_m,\infty}$, we get
\begin{equation}\label{eq:DPP-random-locality-ineq1}
\begin{aligned}
&{\bf1}_{A_m}
\essinf_{\beta\in\mathcal B_{\rho,\infty}}
\esssup_{u\in\mathcal U_{\rho,\infty}}
\mathcal Y_{\rho}^{\varepsilon,\rho,\xi;u,\beta(u)}
 \les
{\bf1}_{A_m}
\essinf_{\hat{\beta}\in\mathcal B_{\rho_m,\infty}}
\esssup_{\hat{u}\in\mathcal U_{\rho_m,\infty}}
\mathcal Y_{\rho_m}^{\varepsilon,\rho_m,\xi_m;\hat{u},\hat{\beta}(\hat{u})}.
\end{aligned}
\end{equation}

Conversely, fix $\beta\in\mathcal B_{\rho,\infty}$. For
$\hat u\in\mathcal U_{\rho_m,\infty}$, define
$u^\sharp\in\mathcal U_{\rho,\infty}$ by
$
u^\sharp_s:
=
\hat u_s{\bf1}_{A_m}+u_0{\bf1}_{A_m^c},
$ $s\ges \rho .
$
Let $\widetilde{\beta(u^\sharp)}$ be the extension of $\beta(u^\sharp)$ to
$[0,\infty)$ by $
\widetilde{\beta(u^\sharp)}_s
:=
v_0{\bf1}_{\{s<\rho\}}
+
\beta(u^\sharp)_s{\bf1}_{\{s\ges\rho\}},
$ $ s\ges0.
$
 Define
$\hat\beta:\mathcal U_{\rho_m,\infty}\to\mathcal V_{\rho_m,\infty}$ by
$$
\hat\beta(\hat u)_s:
=
\widetilde{\beta(u^\sharp)}_s{\bf1}_{A_m}
+
v_0{\bf1}_{A_m^c},
\qquad s\ges \rho_m .
$$
By the similar  argument as above, using
$A_m\in\mathcal F_{\rho_m}\cap\mathcal F_\rho$, we have
$\hat\beta\in\mathcal B_{\rho_m,\infty}$.

On $A_m$, we have $\rho_m=\rho$, $\xi_m=\xi$, and, on
$\llbracket\rho,\infty\rrbracket$, $u^\sharp=\hat u$ and
$\hat\beta(\hat u)=\beta(u^\sharp)$. Hence, by the  uniqueness of
the controlled SDE and BSDE,
$$
{\bf1}_{A_m}
\mathcal Y_{\rho_m}^{\varepsilon,\rho_m,\xi_m;\hat u,\hat\beta(\hat u)}
=
{\bf1}_{A_m}
\mathcal Y_{\rho}^{\varepsilon,\rho,\xi;u^\sharp,\beta(u^\sharp)} \les
{\bf1}_{A_m}
\esssup_{u\in\mathcal U_{\rho,\infty}}
\mathcal Y_{\rho}^{\varepsilon,\rho,\xi;u,\beta(u)} .
$$
Then taking the essential supremum
over $\hat u\in\mathcal U_{\rho_m,\infty}$ gives
$$
{\bf1}_{A_m}
\esssup_{\hat u\in\mathcal U_{\rho_m,\infty}}
\mathcal Y_{\rho_m}^{\varepsilon,\rho_m,\xi_m;\hat u,\hat\beta(\hat u)}
\les
{\bf1}_{A_m}
\esssup_{u\in\mathcal U_{\rho,\infty}}
\mathcal Y_{\rho}^{\varepsilon,\rho,\xi;u,\beta(u)} .
$$

Using $\hat\beta\in\mathcal B_{\rho_m,\infty}$, it follows that
$$
{\bf1}_{A_m}
\essinf_{\hat\beta\in\mathcal B_{\rho_m,\infty}}
\esssup_{\hat u\in\mathcal U_{\rho_m,\infty}}
\mathcal Y_{\rho_m}^{\varepsilon,\rho_m,\xi_m;\hat u,\hat\beta(\hat u)}
\les
{\bf1}_{A_m}
\esssup_{u\in\mathcal U_{\rho,\infty}}
\mathcal Y_{\rho}^{\varepsilon,\rho,\xi;u,\beta(u)} .
$$
Taking the essential infimum over
$\beta\in\mathcal B_{\rho,\infty}$ yields
\begin{equation}\label{eq:DPP-random-locality-ineq2}
{\bf1}_{A_m}
\essinf_{\hat\beta\in\mathcal B_{\rho_m,\infty}}
\esssup_{\hat u\in\mathcal U_{\rho_m,\infty}}
\mathcal Y_{\rho_m}^{\varepsilon,\rho_m,\xi_m;\hat u,\hat\beta(\hat u)}
 \les
{\bf1}_{A_m}
\essinf_{\beta\in\mathcal B_{\rho,\infty}}
\esssup_{u\in\mathcal U_{\rho,\infty}}
\mathcal Y_{\rho}^{\varepsilon,\rho,\xi;u,\beta(u)} .
\end{equation}
Combining \eqref{eq:DPP-random-locality-ineq1} and \eqref{eq:DPP-random-locality-ineq2}, \eqref{eq:DPP-random-locality} is proved.

Combining \eqref{eq:DPP-random-truncated} with
\eqref{eq:DPP-random-locality}, we obtain
$$
{\bf1}_{A_m}\mathcal W_\varepsilon(\rho_m,\xi_m)
 =
{\bf1}_{A_m}
\essinf_{\beta\in\mathcal B_{\rho,\infty}}
\esssup_{u\in\mathcal U_{\rho,\infty}}
\mathcal Y_{\rho}^{\varepsilon,\rho,\xi;u,\beta(u)}  ,
\qquad \dbP\text{-a.s.}
$$
On the other hand, since $(\rho_m,\xi_m)=(\rho,\xi)$ on $A_m$, we have
$
{\bf1}_{A_m}\mathcal W_\varepsilon(\rho_m,\xi_m)
=
{\bf1}_{A_m}\mathcal W_\varepsilon(\rho,\xi),
$ $ \dbP\text{-a.s.}
$
Therefore,
\begin{equation}\label{eq:DPP-random-on-Am}
\begin{aligned}
{\bf1}_{A_m}\mathcal W_\varepsilon(\rho,\xi)
=
{\bf1}_{A_m}
\essinf_{\beta\in\mathcal B_{\rho,\infty}}
\esssup_{u\in\mathcal U_{\rho,\infty}}
\mathcal Y_{\rho}^{\varepsilon,\rho,\xi;u,\beta(u)},
\qquad \dbP\text{-a.s.}
\end{aligned}
\end{equation}
Since \eqref{eq:DPP-random-on-Am} holds for every $m$ and $A_m\uparrow\Omega$ up to a $\mathbb P$-null set, the equality holds on $\ds\bigcup_{m\ges1}A_m$, outside a $\mathbb P$-null set. Therefore,
\begin{equation}\label{eq:DPP-random-unbounded}
\mathcal W_\varepsilon(\rho,\xi)
=
\essinf_{\beta\in\mathcal B_{\rho,\infty}}
\esssup_{u\in\mathcal U_{\rho,\infty}}
\mathcal Y_{\rho}^{\varepsilon,\rho,\xi;u,\beta(u)},
\qquad \dbP\text{-a.s.}
\end{equation}
This proves the desired second equality for  random initial
pairs $(\rho,\xi)$ with $\rho<\infty$, $|\xi|<\infty$, $\dbP$-a.s.

  \ss

\emph{Step 2}.
It remains to identify $\mathcal W_\varepsilon(\rho,\xi)$ with
$W_\varepsilon(\xi)$. Since both $W_\varepsilon$ and
$\mathcal W_\varepsilon$ are deterministic continuous functions, and since
the pointwise identity \eqref{W-e-cY-t} holds for every deterministic initial
pair $(t,x)\in[0,\infty)\times\mathbb R^n$, we have, for every
$\omega\in\{\rho<\infty,\ |\xi|<\infty\}$,
$
\mathcal W_\varepsilon(\rho(\omega),\xi(\omega))
=
W_\varepsilon(\xi(\omega)).
$
Since $\rho<\infty$ and $|\xi|<\infty$, $\dbP$-a.s., it follows that
$
\mathcal W_\varepsilon(\rho,\xi)
=
W_\varepsilon(\xi),
$ $\dbP$-a.s.
Combining this with \eqref{eq:DPP-random-unbounded}, we obtain
\eqref{W-e-cY-random}.
\end{proof}

We note that Proposition \ref{lower value function existence} identifies
$W_\varepsilon$ as the unique viscosity solution of the HJBI equation
\eqref{HJB equation-penalized value function}. This identification, however,
does not rely on the standard dynamic programming argument. Nevertheless, the
DPP for $W_\varepsilon$ remains valid and will be established below. To state
it precisely, we first introduce the discounted backward semigroup associated
with the approximating game.

\bde\label{Def-SemiG-1}\sl
For any initial state $x\in\mathbb R^n$, any $\dbF$-stopping times $\rho$ and $\varsigma$ satisfying $0\les \rho\les \varsigma$, $\mathbb P$-a.s., any terminal variable $ \eta\in L^2_{\mathcal F_\varsigma}(\Omega;\mathbb R)$, and any admissible controls $u\in\mathcal U_{0,\varsigma}$ and $v\in\mathcal V_{0,\varsigma}$, we define
$$
G_{\rho,\varsigma}^{\varsigma,\varepsilon,x;u,v}\big[e^{-\lambda\varsigma}\eta\big]
:=
e^{-\lambda \rho }\tilde Y_\rho^{\varsigma,\varepsilon,x;u,v},
$$
where the pair $\bigl(\tilde Y^{\varsigma,\varepsilon,x;u,v},\tilde Z^{\varsigma,\varepsilon,x;u,v}\bigr)$ is the solution of the following BSDE on $\llbracket 0,\varsigma\rrbracket $,
 \begin{equation}\label{semigroups-BSDE-1}\left\{\2n
\begin{split}
\tilde{Y}_{s\wedge\varsigma}^{\varsigma,\e,x;u,v}
&=
\tilde{Y}_{T\wedge\varsigma}^{\varsigma,\e,x;u,v}
+
\int_{s\wedge\varsigma}^{T\wedge\varsigma}
\big(
f(X_{r}^{x;u,v}, \lambda \tilde{Y}_{r}^{\varsigma,\e,x;u,v},
\tilde{Z}_{r}^{\varsigma,\e,x;u,v},
u_r,v_r)
+
Q_{\e}^{W_\e}(X_{r}^{x;u,v})
\\
&\quad
-
\lambda \tilde{Y}_{r}^{\varsigma,\e,x;u,v}
\big)\,\md r
-
\int_{s\wedge\varsigma}^{T\wedge\varsigma}
\tilde{Z}_{r}^{\varsigma,\e,x;u,v}\,\md B_r,
\qquad \forall\  0\les s\les T<\infty,                                    \\
\tilde{Y}_{\varsigma}^{\varsigma,\e,x;u,v}
&=\eta,\qquad \text{on } \{\varsigma<\infty\},
\end{split}\right.
\end{equation}
and $X^{x;u,v}$ is the solution of \eqref{state}.
\ede

 \br \label{Re-discounted-semigroup}\sl

The notation in Definition \ref{Def-SemiG-1} is justified by the discounted form of the
BSDE. Indeed, applying It\^o's formula to
$e^{-\lambda s}\tilde Y_s^{\varsigma,\e,x;u,v}$ on
$\llbracket \rho,\varsigma\rrbracket$, we obtain
$$
\begin{aligned}
e^{-\lambda\rho}\tilde Y_\rho^{\varsigma,\e,x;u,v}
&=
e^{-\lambda\varsigma}\eta\,\mathbf 1_{\{\varsigma<\infty\}}
+
\int_\rho^\varsigma
e^{-\lambda r}
\big(
f\big(X_r^{x;u,v}, \lambda \tilde{Y}_{r}^{\varsigma,\e,x;u,v}, \tilde Z_r^{\varsigma,\e,x;u,v},u_r,v_r\big)
+
Q_\varepsilon^{W_\varepsilon}
\big(X_r^{x;u,v}\big)
\big)\,\md r  \\
&\quad
-
\int_\rho^\varsigma
e^{-\lambda r}
\tilde Z_r^{\varsigma,\e,x;u,v}\,\md B_r .
\end{aligned}
$$
Note that, by convention,
$
e^{-\lambda\varsigma}\eta=0
$ $\text{on } \{\varsigma=\infty\}.
$
Thus the discount factor eliminates the linear term
$-\lambda\tilde Y_r^{\varsigma,\e,x;u,v}$, and the terminal contribution is
given by the discounted value $e^{-\lambda\varsigma}\eta$. This explains the
notation
$\ds
 G_{\rho,\varsigma}^{\varsigma,\varepsilon,x;u,v}
\big[e^{-\lambda\varsigma}\eta\big]
=
e^{-\lambda\rho}\tilde Y_\rho^{\varsigma,\e,x;u,v}.
$
\er

Then, the strong DPP for  $W_\e$ is established as follows.

\begin{proposition}\label{DPP}\sl
Assume {\bf(C1)}-{\bf(C5)}. Then,  for any  $\varrho\in\cS$ and $x\in \mathbb{R}^n$,
 \begin{equation}\label{auxiliary DPP}
W_{\e}(x)=\mathop{\mathrm{essinf}}\limits_{\b\in \cB_{0,\varrho }}\mathop{\mathrm{esssup}}\limits_{u\in \cU_{0,\varrho}} G_{0,\varrho}^{ \varrho,\e,x;u,\b(u)}
\big[e^{-\l \varrho} W_{\e}(X_{\varrho}^{x;u,\b(u)})\big],\ \mathbb{P}\mbox{-a.s.},
      \end{equation}
where, by convention,
$
e^{-\lambda\varrho}W_\varepsilon\bigl(X_\varrho^{x;u,\beta(u)}\bigr)=0
$ on $\{\varrho=\infty\}.
$
\end{proposition}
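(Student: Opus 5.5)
We prove \eqref{auxiliary DPP} by the standard two-inequality scheme for Elliott--Kalton strategies in the recursive setting of \cite{Buckdahn-Li-2008}, adapted to a possibly unbounded stopping time $\varrho$. Denote the right-hand side of \eqref{auxiliary DPP} by $\widehat W_\varepsilon(x)$. The basic identity comes from the flow property of the BSDE \eqref{penalized BSDE-auxiliary 3}. By \eqref{W-e-cY-t}, $W_\varepsilon(x)=\mathcal W_\varepsilon(0,x)$ and $\mathcal Y^{\varepsilon,0,x;u,v}$ is the BSDE whose generator contains the frozen term $Q^{W_\varepsilon}_\varepsilon$. For every $(u,v)$, uniqueness for BSDEs with random terminal time (Lemma~\ref{LemmaA-1-FBSDE}), together with the flow property $X^{0,x;u,v}_s=X^{\varrho,X_\varrho;u,v}_s$ for $s\ges\varrho$, gives
$$
\mathcal Y_0^{\varepsilon,0,x;u,v}=G^{\varrho,\varepsilon,x;u,v}_{0,\varrho}\big[e^{-\lambda\varrho}\mathcal Y_\varrho^{\varepsilon,\varrho,X_\varrho^{x;u,v};u,v}\big].
$$
On $\{\varrho=\infty\}$ the convention makes the terminal term vanish, which is consistent with the infinite-horizon BSDE. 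Lemma~\ref{Le-W-e-cY-ran}, applied on $\{\varrho<\infty\}$, then identifies the essinf/esssup at time $\varrho$ with $W_\varepsilon(X_\varrho)$. To handle $\{\varrho=\infty\}$, we apply the lemma to $\varrho\wedge m$ and localize on $\{\varrho\les m\}$, as in Step~1 of its proof.

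\emph{Step 1: $W_\varepsilon(x)\les\widehat W_\varepsilon(x)$.} Fix $\delta>0$ and choose $\beta_1\in\mathcal B_{0,\varrho}$ that is $\delta$-optimal for the right-hand side. For each $u\in\mathcal U_{0,\varrho}$ and each random initial state $X_\varrho^{x;u,\beta_1(u)}$, we need a $\delta$-optimal continuation strategy in $\mathcal B_{\varrho,\infty}$. To obtain it, partition $\mathbb R^n$ into Borel sets $\{O_i\}$ of diameter less than $\delta$, pick a point $y_i\in O_i$, and take $\delta$-optimal strategies $\beta^{i}$ for the game started at $(\varrho,y_i)$; these exist by Lemma~\ref{Le-W-e-cY-ran} and the upward-directedness of the family. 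We then paste $\beta_1$ on $\llbracket0,\varrho\rrbracket$ with $\sum_i\mathbf 1_{\{X_\varrho\in O_i\}}\beta^i$ after $\varrho$. Nonanticipativity of the pasted strategy holds because $X_\varrho$ depends on $u$ only on $\llbracket0,\varrho\rrbracket$. The error is controlled by three ingredients: the uniform Lipschitz estimate of Lemma~\ref{LemmaA-1-FBSDE} in the initial state, the Lipschitz bound $l_\varepsilon$ on $W_\varepsilon$ from Proposition~\ref{lower value function existence}, and the comparison and stability estimate of Lemma~\ref{Pro-A.3}, which shows that $G_{0,\varrho}$ is monotone and $1$-Lipschitz in the discounted terminal value. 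Together these give $W_\varepsilon(x)\les\widehat W_\varepsilon(x)+C\delta$.

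\emph{Step 2: $W_\varepsilon(x)\ges\widehat W_\varepsilon(x)$.} Fix $\beta\in\mathcal B_{0,\infty}$ and restrict it to obtain $\beta_1\in\mathcal B_{0,\varrho}$. For $u_1\in\mathcal U_{0,\varrho}$, the map $u_2\mapsto\beta(u_1\oplus u_2)|_{\llbracket\varrho,\infty\rrbracket}$ is a strategy in $\mathcal B_{\varrho,\infty}$. Hence Lemma~\ref{Le-W-e-cY-ran} yields $\esssup_{u_2}\mathcal Y_\varrho\ges W_\varepsilon(X_\varrho)$ a.s. Using the same partition device, we select $\delta$-optimal $u_2$ measurably in $X_\varrho$. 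Monotonicity of $G$ from Lemma~\ref{Pro-A.3} then gives $\esssup_u\mathcal Y_0^{x;u,\beta(u)}\ges G_{0,\varrho}[e^{-\lambda\varrho}W_\varepsilon(X_\varrho)]-C\delta$. Taking esssup over $u_1$ and essinf over $\beta$ completes the inequality.

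The main obstacle is unboundedness of $\varrho$ together with state-dependent pasting on a noncompact space. I will first prove the result for $\varrho\wedge m$, where Lemma~\ref{Le-W-e-cY-ran} applies directly. I then pass to $m\to\infty$ using Lemma~\ref{Cor-terminal-time-stability}: the difference between the two representations is bounded by $\mathbb E^{\bar{\mathbb Q}}[e^{-\lambda m}\cdot 2B_\varepsilon]\to0$ uniformly in controls and strategies, which allows the limit to be exchanged with esssup/essinf. Finally, $G_{0,\varrho}$ is deterministic at time $0$ because $\mathcal F_0$ is trivial, so the equality holds $\mathbb P$-a.s.
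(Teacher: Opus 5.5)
Your proposal is correct in substance, but it reaches the DPP at a bounded stopping time by a different route from the paper.

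The paper does no pasting at this stage. It imports the DPP at deterministic times from \cite[Proposition 4.1]{Buckdahn-Li-Zhao-2021}. It then upgrades this to bounded stopping times by the approximation argument of \cite[Theorem 3.17]{Li-Li-Wei-2021}. Only after that does it truncate with $\varrho_n=\varrho\wedge n$, controlling the error by $C_\varepsilon e^{-\lambda n}$ uniformly in $(u,v)$. That last step matches yours: you use Lemma~\ref{Cor-terminal-time-stability} where the paper uses the flow property plus Lemma~\ref{Pro-A.3}, and the two are equivalent.

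You instead prove both inequalities directly at $\varrho\wedge m$. Your input is the random-initial-time representation of Lemma~\ref{Le-W-e-cY-ran}, combined with Buckdahn--Li pasting over a countable Borel partition of $\mathbb R^n$, the Lipschitz bound $l_\varepsilon$ on $W_\varepsilon$, and Lipschitz dependence of the cost on the initial state. This is the same mechanism the paper uses later in Proposition~\ref{Lemma-DPP-nonanticipative-stopping}. So your route is self-contained within the paper and extends almost verbatim to control-dependent intermediate times, while the paper's is shorter because it leans on the literature. Your route is not more elementary overall, though: the approximation argument you avoid is already inside the proof of Lemma~\ref{Le-W-e-cY-ran}.

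A few details need tightening:
\begin{enumerate}
\item The Lipschitz estimate you use is a conditional version at the random time $\varrho\wedge m$ with $\mathcal F_{\varrho\wedge m}$-measurable initial states. It is not Lemma~\ref{LemmaA-1-FBSDE} as literally stated.
\item The $\delta$-optimal strategies $\beta^i$ come from \emph{downward} directedness of the family $\{\esssup_u\cdots\}_\beta$ under pasting on $\mathcal F_{\varrho\wedge m}$-sets, not upward directedness.
\item On $\{\varrho>m\}$, the BSDE value at time $m$ is bounded a priori only by $B_\varepsilon+(B_f+\sup|Q^{W_\varepsilon}_\varepsilon|)/\lambda$, not by $B_\varepsilon$. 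Your error is therefore $C_\varepsilon e^{-\lambda m}$ rather than $2B_\varepsilon e^{-\lambda m}$; this is harmless.
\item Exchanging the limit $m\to\infty$ with the essinf/esssup also requires identifying the optimization over $\mathcal B_{0,\varrho\wedge m}\times\mathcal U_{0,\varrho\wedge m}$ with that over $\mathcal B_{0,\varrho}\times\mathcal U_{0,\varrho}$ by restriction and extension. The paper does this explicitly in \eqref{W-e-re-ex}.
\end{enumerate}
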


\begin{proof}

By \eqref{W-e-cY-t}, we have
$
W_\e(x)=\cW_\e(0,x),$ $ x\in\dbR^n.
$
Hence it is enough to prove \eqref{auxiliary DPP} for $\cW_\e(0,x)$.
For simplicity, in the sequel we write
$
\cW_\e(x):=\cW_\e(0,x),$ $x\in\dbR^n .
$
We shall also use the fact that $|W_\e(x)|\les B_\e$ for all
$x\in\dbR^n$, and the same bound holds for $\cW_\e(x)$.

\ms

\no {\emph{Step 1. The case of  deterministic times.}}
By  \cite[Proposition 4.1]{Buckdahn-Li-Zhao-2021},  for any  $t\ges0$, $x\in \mathbb{R}^n$, we have
 \begin{equation*}\label{auxiliary DPP-backward semigroup-t-BW}
\cW_{\e}(x)=\mathop{\mathrm{essinf}}\limits_{\b\in \cB_{0,t}}\mathop{\mathrm{esssup}}\limits_{u\in \cU_{0,t}}\cG_{0,t}^{\e,x;u,\b(u)}\big[\cW_{\e}(X_{t}^{x;u,\b(u)})\big],\q \mathbb{P}\mbox{-a.s.},
      \end{equation*}
      where  $\cG_{0,t}^{\e,x;u,\b(u)}\big[\cW_{\e}(X_{t}^{x;u,\b(u)})\big]
      ={\BY} ^{\e,x;u,\b(u)}_0  $, and ${\BY} ^{\e,x;u,v}$ is the solution of the following BSDE:
\begin{equation*}\label{auxiliary penalized backwards semigroups-BSDE}\left\{\2n
  \begin{split}
 & \md    \BY_{s}^{\e,x;u,v}
=-\big( f(X_{s}^{x;u,v},\l{\BY}_{s}^{\e,x;u,v}, {\BZ}_{s}^{\e,x;u,v},u_s,v_s)   + Q^{W_{\e}}_{\e}(X_{s}^{x;u,v})  -  \lambda
 \BY _{s}^{\e,x;u,v}\big)\md s\\
&\qq\qq\q
  + {\BZ}_{s}^{\e,x;u,v}\,\md B_s, \ s\in[0,t],\\
&  {\BY}_{t}^{\e,x;u,v}=\cW_{\e}(X_{t}^{x;u,v}).
    \end{split}\right.
      \end{equation*}
Here  $\cG_{0,t}^{\e,x;u,v}[\cd]$ denotes the usual backward semigroup. Moreover, by the  direct computation, we know
  $  {\BY} ^{\e,x;u,v}_0 = G_{0,t}^{t,\e,x;u,v} \big[e^{-\lambda t }\cW_{\e}(X_{t}^{x;u,v})\big],$ $ \mathbb{P}\mbox{-a.s.} $
Therefore, for all $x\in\dbR^n$,
   \begin{equation}\label{DPP-DT}
\cW_{\e}(x)=\mathop{\mathrm{essinf}}\limits_{\b\in \cB_{0,t}}\mathop{\mathrm{esssup}}\limits_{u\in \cU_{0,t}} G_{0,t}^{ t,\e,x;u,\b(u)}\big[e^{-\lambda t }\cW_{\e}(X_{t}^{x;u,\b(u)})\big],\q \mathbb{P}\mbox{-a.s.}
      \end{equation}

\no {\emph{Step 2. The case of bounded stopping times.}}
By combining   \eqref{DPP-DT} with the approximation argument for bounded stopping times in \cite[Theorem 3.17]{Li-Li-Wei-2021}, we obtain that, for any bounded $\dbF$-stopping time $\varrho$ and  $x\in\mathbb R^n$,
 \begin{equation*}\label{DPP-BS}
\cW_{\e}(x)=\mathop{\mathrm{essinf}}\limits_{\b\in \cB_{0,\varrho}}\mathop{\mathrm{esssup}}\limits_{u\in \cU_{0,\varrho}}G_{0,\varrho}^{ \varrho,\e,x;u,\b(u)}\big[e^{-\lambda \varrho }\cW_{\e}(X_{\varrho}^{x;u,\b(u)})\big],\q \mathbb{P}\mbox{-a.s.}
      \end{equation*}

\no {\emph{Step 3. The case of  arbitrary stopping times.}}
 Let $\varrho$ be an arbitrary $\dbF$-stopping time and set
$
\varrho_n:=\varrho\wedge n,$ $ n\ges1.
$
Then, for  each $n\ges 1$, $\varrho_n$ is   bounded. Therefore, for every $n\ges 1$, $x\in\dbR^n$,
$$
\cW_\varepsilon(x)
=
\essinf_{\beta\in\mathcal B_{0,\varrho_n}}
\esssup_{u\in\mathcal U_{0,\varrho_n}}
G^{  \varrho_n ,\varepsilon,x;u,\beta(u)}_{0,\varrho_n}
\big[e^{-\lambda \varrho_n }\cW_\varepsilon\bigl(X_{\varrho_n}^{x;u,\beta(u)}\bigr)\big],\q \dbP\mbox{-a.s.}
$$

We claim that,  for $n\ges 1$,   $x\in\dbR^n$,
\bel{W-e-re-ex}
\cW_\varepsilon(x)
=
\essinf_{\beta\in\mathcal B_{0,\varrho}}
\esssup_{u\in\mathcal U_{0,\varrho}}
G^{ \varrho_n,\varepsilon,x;u,\beta(u)}_{0,\varrho_n}
\big[e^{-\lambda \varrho _n }\cW_\varepsilon\bigl(X_{\varrho_n}^{x;u,\beta(u)}\bigr)\big],\q \dbP\mbox{-a.s.}
\ee
Indeed, since
$\varrho_n\les \varrho$, every control $u\in\mathcal U_{0,\varrho}$ can be restricted
to $\llbracket 0,\varrho_n\rrbracket$, and every control in
$\mathcal U_{0,\varrho_n}$ can be extended to $\mathcal U_{0,\varrho}$ by fixing an
arbitrary value after $\varrho_n$. The same restriction-extension argument applies to strategies. More precisely,
any $\beta\in\mathcal B_{0,\varrho}$ induces a strategy
$\beta^n\in\mathcal B_{0,\varrho_n}$ by restriction, while any
$\beta^n\in\mathcal B_{0,\varrho_n}$ can be extended to some
$\bar\beta\in\mathcal B_{0,\varrho}$ by fixing an arbitrary value
$v_0\in V$ after $\varrho_n$. The nonanticipativity property guarantees that
these restrictions and extensions are well-defined.
Moreover, the backward semigroup
$
G_{0,\varrho_n}^{\varepsilon,x;u,\beta(u)}
\big[
e^{-\lambda\varrho_n}
\mathcal W_\varepsilon\bigl(X_{\varrho_n}^{x;u,\beta(u)}\bigr)
\big]
$
depends only on the restrictions of $u$ and $\beta(u)$ to
$\llbracket 0,\varrho_n\rrbracket$. Hence replacing
$\mathcal U_{0,\varrho_n}$ and $\mathcal B_{0,\varrho_n}$ by
$\mathcal U_{0,\varrho}$ and $\mathcal B_{0,\varrho}$ does not change the value of
the essential infimum-supremum. Therefore,
\eqref{W-e-re-ex} holds.

\ss

Next, we study the limit of \eqref{W-e-re-ex}  as $n\to\infty$.
Consider the following BSDEs on random intervals,
\begin{equation}\label{BY-n-e}\left\{\2n
\begin{aligned}
{\BY}_{s\wedge\varrho_n}^{n,\e;u,v}
&=
{\BY}_{T\wedge\varrho_n}^{n,\e;u,v}
+
\int_{s\wedge\varrho_n}^{T\wedge\varrho_n}
\Big(
f(X_{r}^{x;u,v}, \l{\BY}_{r}^{n,\e;u,v}, {\BZ}_{r}^{n,\e;u,v},u_r,v_r)
+
Q^{W_\e}_{\e}(X_{r}^{x;u,v})
-
\lambda {\BY}_{r}^{n,\e;u,v}
\Big)\md r       \\
&\quad
-
\int_{s\wedge\varrho_n}^{T\wedge\varrho_n}
{\BZ}_{r}^{n,\e;u,v}\md B_r,
\qquad 0\les s\les T<\infty,                                      \\
{\BY}_{\varrho_n}^{n,\e;u,v}
&=
\cW_\varepsilon\bigl(X_{\varrho_n}^{x;u,v}\bigr),
\end{aligned}\right.
\end{equation}
and
\begin{equation}\label{BY-0-e}\left\{\2n
\begin{aligned}
{\BY}_{s\wedge\varrho}^{\e;u,v}
&=
{\BY}_{T\wedge\varrho}^{\e;u,v}
+
\int_{s\wedge\varrho}^{T\wedge\varrho}
\Big(
f(X_{r}^{x;u,v}, \l{\BY}_{r}^{\e;u,v}, {\BZ}_{r}^{\e;u,v},u_r,v_r)
+
Q^{W_\e}_{\e}(X_{r}^{x;u,v})
-
\lambda {\BY}_{r}^{\e;u,v}
\Big)\md r       \\
&\quad
-
\int_{s\wedge\varrho}^{T\wedge\varrho}
{\BZ}_{r}^{\e;u,v}\md B_r,
\qquad 0\les s\les T<\infty,                                      \\
{\BY}_{\varrho}^{\e;u,v}
&=
\cW_\varepsilon\bigl(X_{\varrho}^{x;u,v}\bigr),
\qquad \text{on } \{\varrho<\infty\}.
\end{aligned}\right.
\end{equation}
The well-posedness of these equations follows directly from Lemma
\ref{LemmaA-1-FBSDE}.

 By Definition \ref{Def-SemiG-1} and   the flow property of the   backward semigroup,
 for $(u,v)\in\mathcal U_{0,\varrho}\times\mathcal V_{0,\varrho}$, we have
\bel{G-G}\ba{ll}
\ns\ds
 {\BY}_{0}^{ n,\e ;u,v}- {\BY}_{0}^{\e;u,v }=G^{ \varrho_n, \varepsilon,x;u,v}_{0, \varrho_n}
\big[e^{-\lambda \varrho_n }\cW_\varepsilon\bigl(X_{ \varrho_n}^{x;u,v}\bigr)\big] -  G^{\varrho,\varepsilon,x;u,v}_{0,\varrho}
\big[e^{-\lambda \varrho }\cW_\varepsilon\bigl(X_{\varrho}^{x;u,v}\bigr)\big]\\
 %
 \ns\ds
  = G^{ \varrho_n, \varepsilon,x;u,v}_{0, \varrho_n}
\big[e^{-\lambda \varrho_n }\cW_\varepsilon\bigl(X_{ \varrho_n}^{x;u,v}\bigr)\big] - G^{\varrho_n,\varepsilon,x;u,v}_{0,\varrho_n}\big[ e^{-\lambda \varrho  _n}{\BY}_{\varrho  _n}^{\e;u,v }  \big] ,\q \dbP\mbox{-a.s.}
\ea\ee
%
%
%
For \eqref{BY-n-e} and \eqref{BY-0-e} on $\llbracket 0,\varrho_n\rrbracket $, applying  Lemma  \ref{Pro-A.3}, there exists some probability measure $\dbQ^n$  such that
\bel{Yn-Y}
 | {\BY}_{0}^{ n,\e ;u,v}- {\BY}_{0}^{\e;u,v }|
\les
 \dbE^{\dbQ^n}
 \[
\big|e^{-\lambda \varrho_n}\cW_\varepsilon\bigl(X_{\varrho_n}^{x;u,v}\bigr)
-e^{-\lambda \varrho  _n}{\BY}_{\varrho  _n}^{\e;u,v }
\big|
\] .
\ee

We claim that there exists a constant $C_\varepsilon>0$, independent of
$n,u$ and $v$, such that
\begin{equation}\label{terminal-difference-rhon}
\bigl|e^{-\lambda \varrho_n}\cW_\varepsilon\bigl(X_{\varrho_n}^{x;u,v}\bigr)
-e^{-\lambda \varrho  _n}{\BY}_{\varrho  _n}^{\e;u,v }
\bigr|
\les C_\varepsilon e^{- \lambda n},
\qquad \mathbb P\text{-a.s.}
\end{equation}
Indeed, on $\{ \varrho \les n\}$, we have  $\varrho_n=\varrho $, and therefore
$ e^{-\lambda \varrho_n} \cW_\varepsilon\bigl(X_{\varrho_n}^{x;u,v}\bigr)
=e^{-\lambda \varrho  _n}{\BY}_{\varrho  _n}^{\e;u,v } ,$ $ \dbP\mbox{-a.s.}
$
On $\{ \varrho >n\}$, we have $ \varrho_n =n$.
Applying  Lemma \ref{LemmaA-1-FBSDE} to \eqref{BY-0-e}, we know
 $|{\BY}_{n}^{\e;u,v}|\les B_\e+\frac{\e B_f+B_\e+B_1+B_2}{\l\e }$.
Combined with
$|\mathcal W_\varepsilon(x)|\les B_\e $, for all $x\in\dbR^n$, we obtain
$$
 \big|
e^{-\lambda n}\cW_\varepsilon\bigl(X_{ n}^{x;u,v}\bigr)
-e^{-\lambda n}{\BY}_{ n}^{\e;u,v }
\big|
\les C_\varepsilon e^{-\lambda n},\q \dbP\mbox{-a.s.}
$$
where $C_\varepsilon$ is independent of $n,u$ and $v$. Thus
\eqref{terminal-difference-rhon} holds.

 Furthermore,  combined with \eqref{Yn-Y},  for every
 $u\in\mathcal U_{0,\varrho}$ and $v\in\mathcal V_{0, \varrho}$, we get
$$      |{\BY}_{0}^{ n,\e ;u,v}- {\BY}_{0}^{\e;u,v }|\les
C_\varepsilon e^{-\lambda n},\q \dbP\mbox{-a.s.}
 $$
Since this estimate is uniform with respect to   $u$ and $v$, by \eqref{G-G} and the definitions of the essential supremum and infimum, we get
$$\ba{ll}
\ns\ds\Big|
\essinf_{\beta\in\mathcal B_{0,\varrho}}
\esssup_{u\in\mathcal U_{0,\varrho}}
G^{ \varrho_n,\varepsilon,x;u,\beta(u)}_{0,\varrho_n}
\big[e^{-\lambda \varrho _n }\cW_\varepsilon\bigl(X_{\varrho_n}^{x;u,\beta(u)}\bigr)\big]
-
\essinf_{\beta\in\mathcal B_{0,\varrho}}
\esssup_{u\in\mathcal U_{0,  \varrho}}
G^{\varrho,\varepsilon,x;u,\b(u)}_{0,\varrho}
\big[e^{-\lambda \varrho }\cW_\varepsilon\bigl(X_{\varrho}^{x;u,\b(u)}\bigr)\big]
\Big|\\
\ns\ds
\les
C_\varepsilon e^{-\lambda n},\q \dbP\mbox{-a.s.}
\ea$$
Letting $n\to\infty$ in \eqref{W-e-re-ex}, we finally  obtain \eqref{auxiliary DPP}.
\end{proof}

  Lemma \ref{DPP} establishes the DPP for a fixed intermediate stopping time.
This form, however, is not sufficient for passing back to the mixed
control--stopping game, because there the intermediate stopping time is
selected through a nonanticipative stopping strategy and therefore may depend
on the opponent's control. We thus need an extension of the DPP to
control-dependent intermediate stopping times.

\begin{proposition} \sl
\label{Lemma-DPP-nonanticipative-stopping}
Assume {\bf (C1)}--{\bf(C5)} hold.

\no(i) Let
$\{\boldsymbol{\rho}^{\beta,\theta}\}_{\beta\in\mathcal B_{0,\infty},
\,\theta\in\mathcal S}$ be a family such that, for each fixed
$\beta\in\mathcal B_{0,\infty}$ and $\theta\in\mathcal S$, the mapping
$
\boldsymbol{\rho}^{\beta,\theta}:\mathcal U_{0,\infty}\to\mathcal S
$
is a nonanticipative stopping strategy. Then, for every $\varepsilon>0$ and $x\in\dbR^n$,
$$
\begin{aligned}
  W_\varepsilon(x)
&\ges
\essinf_{\beta\in\mathcal B_{0,\infty}}
\esssup_{\substack{u\in\mathcal U_{0,\infty}\\ \theta\in\mathcal S}}
G_{0,\boldsymbol{\rho}^{\beta,\theta}[u]}^{\boldsymbol{\rho}^{\beta,\theta}[u],\varepsilon,x;u,\beta(u)}
\left[
e^{-\lambda\boldsymbol{\rho}^{\beta,\theta}[u]}
W_\varepsilon\(
X_{\boldsymbol{\rho}^{\beta,\theta}[u]}^{x;u,\beta(u)}
\)
\right] ,
\qquad \mathbb P\hbox{-a.s.}
\end{aligned}
$$
 (ii)  Let
$\{\boldsymbol{\rho}^{\beta }\}_{\beta\in\mathcal B_{0,\infty}}$ be a family such that, for each fixed
$\beta\in\mathcal B_{0,\infty}$, the mapping
$
\boldsymbol{\rho}^{\beta}:\mathcal U_{0,\infty}\to\mathcal S
$
is a nonanticipative stopping strategy.  Then,  for every $\varepsilon>0$ and $x\in\dbR^n$,
$$
W_\varepsilon(x)
\les \essinf_{\b\in\mathcal B_{0,\infty}}
\esssup_{u\in\mathcal U_{0,\infty}}
G^{\boldsymbol{\rho}^{\beta }[u],\varepsilon,x;u,\beta(u)}_{0,\boldsymbol{\rho}^\beta[u]}
\left[
e^{-\lambda\rho^\beta[u]}
W_\varepsilon\left(
X^{x;u,\beta(u)}_{\boldsymbol{\rho}^\beta[u]}
\right)
\right],
\qquad \mathbb P\text{-a.s.},
$$
 where
the discounted terminal terms are understood to be zero on
$\big\{\boldsymbol{\rho}^{\beta,\theta}[u]=\infty\big\}$ and $\big\{\boldsymbol{\rho}^{\beta}[u]=\infty\big\}$.

\end{proposition}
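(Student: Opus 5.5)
The plan is to prove both inequalities by pasting arguments. Each rests on three ingredients: the flow property of the discounted backward semigroup of Definition \ref{Def-SemiG-1}, the random-initial-state representation of Lemma \ref{Le-W-e-cY-ran}, and the comparison/stability estimates of Lemmas \ref{Pro-A.3} and \ref{Cor-terminal-time-stability}. The main difference from Proposition \ref{DPP} is that the intermediate time $\rho=\boldsymbol\rho[u]$ now depends on the control. So I first record a consequence of Definition \ref{Def-ST} that I will use repeatedly: if $u'\equiv u$ on $\llbracket0,\boldsymbol\rho[u]\rrbracket$, then taking $\varrho=\boldsymbol\rho[u]$ gives $\boldsymbol\rho[u']\wedge\boldsymbol\rho[u]=\boldsymbol\rho[u]$. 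Hence $\boldsymbol\rho[u']\ges\boldsymbol\rho[u]$, and the stopped processes $X$, $G$ computed up to $\boldsymbol\rho[u]$ coincide for $u$ and $u'$. I also use that, since $\rho\les\infty$ and the terminal term vanishes on $\{\rho=\infty\}$, for any $(u,v)$
$$
\cY_0^{\varepsilon,0,x;u,v}=G^{\rho,\varepsilon,x;u,v}_{0,\rho}\big[e^{-\lambda\rho}\cY_\rho^{\varepsilon,0,x;u,v}\big].
$$

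\emph{Part (i).} Fix $\delta>0$. Since $W_\varepsilon(x)=\essinf_\beta\esssup_u\cY_0$ and the family is directed downward (standard pasting of strategies), I choose $\beta^\delta\in\cB_{0,\infty}$ with $\esssup_u\cY_0^{\varepsilon,0,x;u,\beta^\delta(u)}\les W_\varepsilon(x)+\delta$. Now fix $u\in\cU_{0,\infty}$ and $\theta\in\cS$, and put $\rho:=\boldsymbol\rho^{\beta^\delta,\theta}[u]$. For $\hat u\in\cU_{\rho,\infty}$, the map $\hat u\mapsto\beta^\delta(u\oplus_\rho\hat u)|_{\llbracket\rho,\infty\rrbracket}$ is a strategy in $\cB_{\rho,\infty}$; it is nonanticipative because $u$ and $\rho$ are fixed. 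By Lemma \ref{Le-W-e-cY-ran}, applied on $\{\rho<\infty\}$ after the truncation used in its proof,
$$
\esssup_{\hat u}\cY_\rho^{\varepsilon,\rho,X_\rho;\hat u,\beta^\delta(u\oplus_\rho\hat u)}\ges W_\varepsilon(X_\rho).
$$
Because the esssup family is directed upward, a countable pasting over $\mathcal F_\rho$-measurable sets yields one $\hat u^\delta$ with $\cY_\rho\ges W_\varepsilon(X_\rho)-\delta$ on $\{\rho<\infty\}$. Set $u':=u\oplus_\rho\hat u^\delta$. By the remark above, $\boldsymbol\rho^{\beta^\delta,\theta}[u']\ges\rho$, and the state on $\llbracket0,\rho\rrbracket$ is unchanged. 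The comparison in Lemma \ref{Pro-A.3}, with the $e^{-\lambda\rho}$ factor absorbing the constant $\delta$, then gives
$$
G^{\rho,\varepsilon,x;u,\beta^\delta(u)}_{0,\rho}\big[e^{-\lambda\rho}W_\varepsilon(X_\rho)\big]\les \cY_0^{\varepsilon,0,x;u',\beta^\delta(u')}+\delta\les W_\varepsilon(x)+2\delta .
$$
Taking $\esssup$ over $(u,\theta)$, then bounding the $\essinf$ over $\beta$ by the value at $\beta^\delta$, and letting $\delta\to0$ proves (i).

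\emph{Part (ii).} Fix $\beta\in\cB_{0,\infty}$ and $\delta>0$, and write $\rho[u]:=\boldsymbol\rho^\beta[u]$. I will build $\tilde\beta\in\cB_{0,\infty}$ that agrees with $\beta$ up to $\rho[u]$ and plays $\delta$-optimally afterwards. Partition $\mathbb R^n$ into countably many Borel sets $O_i$ of diameter less than $\delta$, with chosen points $y_i\in O_i$. By Lemma \ref{Le-W-e-cY-ran} and the stationarity \eqref{W-e-cY-t}, each $y_i$ admits $\beta_i\in\cB_{0,\infty}$, used after a time shift, with $\esssup_{\hat u}\cY\les W_\varepsilon(y_i)+\delta$. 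Define
$$
\tilde\beta(u):=\beta(u)\ \text{on }\llbracket0,\rho[u]\rrbracket,\qquad \tilde\beta(u):=\sum_i\mathbf 1_{\{X_{\rho[u]}^{x;u,\beta(u)}\in O_i\}}\,\beta_i\big(u(\cdot+\rho[u])\big)(\cdot-\rho[u])\ \text{after }\rho[u].
$$
The uniform Lipschitz bound $l_\varepsilon$ for $W_\varepsilon$ (Proposition \ref{lower value function existence}) and the Lipschitz estimate in $x$ for the infinite-horizon cost (Lemma \ref{LemmaA-1-FBSDE}) give, for every $u$, $\cY_{\rho[u]}^{u,\tilde\beta(u)}\les W_\varepsilon(X_{\rho[u]})+C\delta$ on $\{\rho[u]<\infty\}$. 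The flow identity together with Lemma \ref{Pro-A.3} then yields, uniformly in $u$,
$$
\cY_0^{u,\tilde\beta(u)}\les G^{\rho[u],\varepsilon,x;u,\beta(u)}_{0,\rho[u]}\big[e^{-\lambda\rho[u]}W_\varepsilon(X_{\rho[u]})\big]+C\delta .
$$
Hence $W_\varepsilon(x)\les\esssup_u\cY_0^{u,\tilde\beta(u)}\les\esssup_u G[\cdots]+C\delta$. Since $\beta$ and $\delta$ are arbitrary, this proves (ii).

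\emph{Main obstacle.} I expect the hardest step to be verifying that $\tilde\beta$ is nonanticipative. Take $u^1\equiv u^2$ on $\llbracket0,S\rrbracket$. Definition \ref{Def-ST} gives $\rho[u^1]\wedge S=\rho[u^2]\wedge S$. On $\{\rho[u^1]\ge S\}$ both strategies equal $\beta$ up to $S$. On $\{\rho[u^1]<S\}$ the two switching times coincide and are $\mathcal F_{\rho}$-measurable, so the selected indices $i$ agree and the shifted $\beta_i$ are fed identical inputs on $\llbracket\rho,S\rrbracket$. Checking this carefully, together with the shift-invariance of strategies over a random start time (which reuses the truncation $\rho\wedge m$, $A_m$ from Lemma \ref{Le-W-e-cY-ran}), is where I expect the real work. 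The analogous admissibility check for the pasted control $u'$ in part (i) is easier, because there $u$ is fixed.
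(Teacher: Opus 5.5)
Part (i) is correct and is essentially the paper's argument. You restrict $\beta$ to $\hat u\mapsto\beta(u\oplus_\rho\hat u)$ after $\rho=\boldsymbol\rho^{\beta,\theta}[u]$, pick a $\delta$-optimal continuation control via Lemma \ref{Le-W-e-cY-ran}, and conclude with the flow property and the monotonicity/stability of $G$. The differences are cosmetic. Fixing $\beta^\delta$ first is unnecessary: the paper proves the bound for every $\beta$ and then takes the essinf, which is equivalent because $\mathcal F_0$ is trivial. You justify the choice of $\hat u^\delta$ by upward directedness, where the paper just asserts it. You treat $\{\rho=\infty\}$ by the $A_m$-localization instead of the paper's truncation $\rho\wedge n$ plus the Step 3 estimate of Proposition \ref{DPP}.

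The gap is in part (ii). The architecture matches the paper, but the continuation $\beta_i\big(u(\cdot+\rho[u])\big)(\cdot-\rho[u])$ is not well defined.
\begin{itemize}
\item The process $s\mapsto u(s+\rho[u])$ is adapted to $(\mathcal F_{\rho[u]+s})_{s\ges0}$, not to $\dbF$. So it is not an element of $\cU_{0,\infty}$, and $\beta_i$ cannot be applied to it.
\item Shifting the output back gives a process that is not a functional of the Brownian increments after $\rho[u]$. Hence neither admissibility of $\tilde\beta(u)$ nor the transfer of $\delta$-optimality from $(0,y_i)$ to $(\rho[u],y_i)$ follows.
\item An honest shift would need canonical-space shift operators and applying $\beta_i$ $\omega$-by-$\omega$ to conditional controls. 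Since $\beta_i$ only acts on $\md s\times\md\dbP$-equivalence classes, gluing the outputs into a progressively measurable process is a measurable-selection problem.
\item The $\rho\wedge m$, $A_m$ truncation of Lemma \ref{Le-W-e-cY-ran} does not resolve this. It localizes the initial pair but still produces strategies in $\cB_{\rho,\infty}$. These depend on $\rho=\rho[u]$, hence on $u$, which is exactly the nonanticipative-selection problem.
\end{itemize}
The paper's own proof merely asserts that such a selection can be made, so it does not supply this step either.

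A way to close the gap without time shifts is to discretize the switching time.
\begin{itemize}
\item Set $\rho_k[u]:=2^{-k}\lceil 2^k\rho[u]\rceil$, with $\rho_k[u]=\infty$ on $\{\rho[u]=\infty\}$. A direct check of Definition \ref{Def-ST} shows this is again a nonanticipative stopping strategy.
\item For each dyadic $t_j$ and each $y_i$, choose $\beta_{j,i}\in\cB_{t_j,\infty}$ with $\esssup_{\hat u}\cY_{t_j}^{\e,t_j,y_i;\hat u,\beta_{j,i}(\hat u)}\les W_\e(y_i)+\delta$. This is possible by \eqref{cW-kappa}, \eqref{W-e-cY-t} and downward directedness, and the $\beta_{j,i}$ do not depend on $u$.
\item Put $\tilde\beta(u):=\beta(u)$ on $\llbracket0,\rho_k[u]\rrbracket$, and $\tilde\beta(u):=\sum_{j,i}\mathbf 1_{E_{j,i}}\beta_{j,i}(u|_{[t_j,\infty)})$ afterwards, where $E_{j,i}:=\{\rho_k[u]=t_j,\ X_{t_j}^{x;u,\beta(u)}\in O_i\}\in\mathcal F_{t_j}$.
\item Your nonanticipativity argument now works with deterministic switching times. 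On $E:=E_{j,i}\cap\{t_j<S\}$, apply the nonanticipativity of $\beta_{j,i}$ with the stopping time $S\mathbf 1_E+t_j\mathbf 1_{E^c}$.
\item Your Lipschitz-plus-flow estimate then gives $W_\e(x)\les\esssup_u G_{0,\rho_k[u]}^{\rho_k[u],\e,x;u,\beta(u)}\big[e^{-\lambda\rho_k[u]}W_\e(X_{\rho_k[u]})\big]+C\delta$.
\item Finally, replace $\rho_k[u]$ by $\rho[u]$ as $k\to\infty$. Use Lemma \ref{Cor-terminal-time-stability}, the Lipschitz bound $l_\e$ of $W_\e$, and $\dbE^{\dbQ}_\rho|X_{\rho_k}-X_\rho|^2\les C2^{-k}$ on $\{\rho<\infty\}$. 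This last bound is uniform in $u$ because $b,\si$ and the Girsanov kernels are bounded.
\end{itemize}
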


\begin{proof}
(i)
Fix $\beta\in\mathcal B_{0,\infty}$,
$u\in\mathcal U_{0,\infty}$ and $\theta\in\mathcal S$, and put
$
\boldsymbol{\rho}:=\boldsymbol{\rho}^{\beta,\theta}[u].
$
For any control
$\bar u\in\mathcal U_{\boldsymbol{\rho},\infty}$, define the pasted control
$
u\otimes_{\boldsymbol{\rho}} \bar u
:=
u\,\mathbf 1_{\llbracket 0,\boldsymbol{\rho}\rrbracket }
+
\bar u\,\mathbf 1_{\rrbracket \boldsymbol{\rho},\infty\llbracket}.
$
Then $u\otimes_{\boldsymbol{\rho}}\bar u\in\mathcal U_{0,\infty}$.
By the nonanticipativity of $\beta$, we have
$
\beta(u\otimes_{\boldsymbol{\rho}}\bar u)=\beta(u)
$
on $ \llbracket 0,\boldsymbol{\rho}\rrbracket .
$
Moreover, since $\boldsymbol{\rho}^{\beta,\theta}$ is nonanticipative and
$u\otimes_{\boldsymbol{\rho}}\bar u=u$ on
$\llbracket 0,\boldsymbol{\rho}\rrbracket $, we also have
$$
\boldsymbol{\rho}^{\beta,\theta}
[u\otimes_{\boldsymbol{\rho}}\bar u]\wedge\boldsymbol{\rho}
=
\boldsymbol{\rho}^{\beta,\theta}[u]\wedge\boldsymbol{\rho}
=
\boldsymbol{\rho},
\qquad \mathbb P\hbox{-a.s.}
$$
For
$\bar u\in\mathcal U_{\boldsymbol{\rho},\infty}$, set
$
\beta_{\boldsymbol{\rho}}^u(\bar u)
:=
\left.\beta(u\otimes_{\boldsymbol{\rho}}\bar u)\right|_{\llbracket \boldsymbol{\rho},\infty\llbracket } .
$
Then $\beta_{\boldsymbol{\rho}}^u\in \cB_{\boldsymbol{\rho},\infty}$. Indeed, if
$\bar u^1=\bar u^2$ on $\llbracket \boldsymbol{\rho},\sigma\rrbracket $ for some
$\mathbb F$-stopping time $\sigma\ges\boldsymbol{\rho}$, then
$u\otimes_{\boldsymbol{\rho}}\bar u^1
=
u\otimes_{\boldsymbol{\rho}}\bar u^2$
on $\llbracket 0,\sigma\rrbracket $.
The nonanticipativity of $\beta$ gives
$
\beta(u\otimes_{\boldsymbol{\rho}}\bar u^1)
=
\beta(u\otimes_{\boldsymbol{\rho}}\bar u^2)
$
on $\llbracket 0,\sigma\rrbracket ,
$
and hence
$
\beta_{\boldsymbol{\rho}}^u(\bar u^1)
=
\beta_{\boldsymbol{\rho}}^u(\bar u^2)
$
on $ \llbracket \boldsymbol{\rho},\sigma\rrbracket .
$

\no \textbf{Case 1.}  $\boldsymbol{\rho}<\infty$, $\mathbb P$-a.s.
Applying Lemma \ref{Le-W-e-cY-ran} with
$(\rho,\xi)=(\boldsymbol{\rho},X_{\boldsymbol{\rho}}^{x;u,\beta(u)})$,
we get
$$
 W _\varepsilon ( X_{\boldsymbol{\rho}}^{x;u,\beta(u)})
=\cW _\varepsilon (\boldsymbol{\rho},X_{\boldsymbol{\rho}}^{x;u,\beta(u)})
=
\essinf\limits_{\beta\in\mathcal B_{\boldsymbol{\rho},\infty}}
\esssup\limits_{u\in\mathcal U_{\boldsymbol{\rho},\infty}}
J_\varepsilon(\boldsymbol{\rho},X_{\boldsymbol{\rho}}^{x;u,\beta(u)};u,\beta(u)) .
$$
So for every $\delta>0$ we can choose
$\bar u^\delta\in\mathcal U_{\boldsymbol{\rho},\infty}$ such that
$$
J_\varepsilon
\big(\boldsymbol{\rho},
X_{\boldsymbol{\rho}}^{x;u,\beta(u)};
\bar u^\delta,\beta_{\boldsymbol{\rho}}^u(\bar u^\delta)
\big)
\ges\cW _\varepsilon (\boldsymbol{\rho},X_{\boldsymbol{\rho}}^{x;u,\beta(u)})-\delta
=
W_\varepsilon (X_{\boldsymbol{\rho}}^{x;u,\beta(u)})-\delta,
\qquad \mathbb P\hbox{-a.s.}
$$
Set
$
u^\delta:=u\otimes_{\boldsymbol{\rho}}\bar u^\delta .
$
By the flow property of the backward semigroup
and applying the stability result in Lemma \ref{Pro-A.3} to the BSDE
\eqref{penalized BSDE-auxiliary 3}, we have
$$
\begin{aligned}
&J_\varepsilon(0,x;u^\delta,\beta(u^\delta))
 =
G_{0,\boldsymbol{\rho}}^{\boldsymbol{\rho},\varepsilon,x;u,\beta(u)}
\left[
e^{-\lambda\boldsymbol{\rho}}
J_\varepsilon
 \big(\boldsymbol{\rho},
X_{\boldsymbol{\rho}}^{x;u,\beta(u)};
\bar u^\delta,\beta_{\boldsymbol{\rho}}^u(\bar u^\delta)
 \big)
\right] \\
&  \ges
G_{0,\boldsymbol{\rho}}^{\boldsymbol{\rho},\varepsilon,x;u,\beta(u)}
\[
e^{-\lambda\boldsymbol{\rho}}
W_\varepsilon \big(X_{\boldsymbol{\rho}}^{x;u,\beta(u)} \big)
- \delta e^{-\lambda\boldsymbol{\rho}}\]   \ges
G_{0,\boldsymbol{\rho}}^{\boldsymbol{\rho},\varepsilon,x;u,\beta(u)}
\[
e^{-\lambda\boldsymbol{\rho}}
W_\varepsilon \big(X_{\boldsymbol{\rho}}^{x;u,\beta(u)} \big)
\]
-\delta .
\end{aligned}
$$
Therefore, for each fixed
$\beta\in\mathcal B_{0,\infty}$, $u\in\mathcal U_{0,\infty}$ and
$\theta\in\mathcal S$, we have
$$
\esssup_{u'\in\mathcal U_{0,\infty}}
J_\varepsilon(0,x;u',\beta(u'))
\ges
G_{0,\boldsymbol{\rho}}^{\boldsymbol{\rho},\varepsilon,x;u,\beta(u)}
\[
e^{-\lambda\boldsymbol{\rho}}
W_\varepsilon \big(X_{\boldsymbol{\rho}}^{x;u,\beta(u)} \big)
\]
-\delta.
$$

Since $u\in\mathcal U_{0,\infty}$ and $\theta\in\mathcal S$ were arbitrary,
and notice that $\boldsymbol{\rho}=\boldsymbol{\rho}^{\beta,\theta}[u]$,
we obtain
$$
\begin{aligned}
\esssup_{u'\in\mathcal U_{0,\infty}}
J_\varepsilon(0,x;u',\beta(u'))
&\ges
\esssup_{\substack{u\in\mathcal U_{0,\infty}\\ \theta\in\mathcal S}}
G_{0,\boldsymbol{\rho}^{\beta,\theta}[u]}^{\boldsymbol{\rho}^{\beta,\theta}[u],\varepsilon,x;u,\beta(u)}
\left[
e^{-\lambda\boldsymbol{\rho}^{\beta,\theta}[u]}
W_\varepsilon
\(
X_{\boldsymbol{\rho}^{\beta,\theta}[u]}^{x;u,\beta(u)}
\)
\right]
-\delta .
\end{aligned}
$$
Taking the essential infimum over $\beta\in\mathcal B_{0,\infty}$ gives
$$
W_\varepsilon(x)=\cW_\e(0,x)
 \ges
\essinf_{\beta\in\mathcal B_{0,\infty}}
\esssup_{\substack{u\in\mathcal U_{0,\infty}\\ \theta\in\mathcal S}}
G_{0,\boldsymbol{\rho}^{\beta,\theta}[u]}^{\boldsymbol{\rho}^{\beta,\theta}[u],\varepsilon,x;u,\beta(u)}
\left[
e^{-\lambda\boldsymbol{\rho}^{\beta,\theta}[u]}
W_\varepsilon
\(
X_{\boldsymbol{\rho}^{\beta,\theta}[u]}^{x;u,\beta(u)}
\)
\right]
-\delta .
$$
Finally, letting $\delta\downarrow0$ yields the desired inequality in (i).

\ss

 \no \textbf{Case 2.} $\boldsymbol{\rho}$ may take the value
$\infty$. In this case, set
$
\boldsymbol{\rho}_n:=\boldsymbol{\rho}\wedge n,$ $ n\ges1.
$
Then $\boldsymbol{\rho}_n<\infty$, $\mathbb P$-a.s., and the preceding
argument applies to $\boldsymbol{\rho}_n$. Thus we obtain
$$
\esssup_{u'\in\mathcal U_{0,\infty}}
J_\varepsilon(0,x;u',\beta(u'))
\ges
G_{0,\boldsymbol{\rho}_n}^{\boldsymbol{\rho}_n,\varepsilon,x;u,\beta(u)}
\left[
e^{-\lambda\boldsymbol{\rho}_n}
W_\varepsilon
\big(
X_{\boldsymbol{\rho}_n}^{x;u,\beta(u)}
\big)
\right]
-\delta .
$$
Similar to Step 3 in the proof of Proposition \ref{DPP}, letting $n\to\infty$,
we get
$$
\esssup_{u'\in\mathcal U_{0,\infty}}
J_\varepsilon(0,x;u',\beta(u'))
\ges
G_{0,\boldsymbol{\rho}}^{\boldsymbol{\rho},\varepsilon,x;u,\beta(u)}
\[
e^{-\lambda\boldsymbol{\rho}}
W_\varepsilon
\big(
X_{\boldsymbol{\rho}}^{x;u,\beta(u)}
\big)
\]
-\delta .
$$
Repeating the last part of Case 1, namely taking the essential supremum over
$(u,\theta)$, then the essential infimum over $\beta$, and finally letting
$\delta\downarrow0$, we obtain (i).

\ms

 (ii)  Fix $\beta\in\mathcal B_{0,\infty}$ and put
$
\rho^u:=\boldsymbol{\rho}^{\beta}[u],
$ $ u\in\mathcal U_{0,\infty}.
$
We first consider the case where
$
\rho^u<\infty,$ $ \mathbb P\text{-a.s.},
$ for every $ u\in\mathcal U_{0,\infty}.
$

By Lemma~\ref{Le-W-e-cY-ran}, applied with
$
(\rho,\xi)
=
\bigl(\rho^u,X_{\rho^u}^{x;u,\beta(u)}\bigr),
$
and together with the standard countable-partition approximation for random
initial data, the $\delta$-optimal continuation strategies can be selected as
measurable pastings depending only on the random initial pair
 $
\bigl(\rho^u,X_{\rho^u}^{x;u,\beta(u)}\bigr).
$
In particular, these selections can be made consistently with respect to the
past control path. Therefore, for every $\delta>0$, we may choose, in a
nonanticipative way, a family
$
\big\{
\gamma_{\rho^u}^{u,\delta}\in\mathcal B_{\rho^u,\infty}
:  u\in\mathcal U_{0,\infty}
\big\}
$
such that, for every $u\in\mathcal U_{0,\infty}$,
$$
\esssup_{\bar u\in\mathcal U_{\rho^u,\infty}}
J_\varepsilon\bigl(
\rho^u,X_{\rho^u}^{x;u,\beta(u)};
\bar u,\gamma_{\rho^u}^{u,\delta}(\bar u)
\bigr)
\les
W_\varepsilon\bigl(X_{\rho^u}^{x;u,\beta(u)}\bigr)+\delta,
\qquad \mathbb P\hbox{-a.s.}
$$
Now define a strategy $\beta^\delta\in\mathcal B_{0,\infty}$ as follows,
$$
\beta^\delta(u)
:=
\beta(u)\,{\bf 1}_{\llbracket0,\rho^u\rrbracket}
+
\gamma_{\rho^u}^{u,\delta}\bigl(u|_{\rrbracket\rho^u,\infty\llbracket}\bigr)
\,{\bf 1}_{\rrbracket\rho^u,\infty\llbracket}.
$$
The nonanticipativity of $\beta^\delta$ follows from the corresponding
nonanticipativity properties of $\beta$, the stopping rule
$\boldsymbol{\rho}^{\beta}$, and the  strategies
$\gamma_{\rho^u}^{u,\delta}$.


For each $u\in\mathcal U_{0,\infty}$, using the flow property of  the backward semigroup property
and applying  the stability result in Lemma \ref{Pro-A.3} to the BSDE   \eqref{penalized BSDE-auxiliary 3},  we have
$$
\begin{aligned}
J_\varepsilon(0,x;u,\beta^\delta(u))
&= G_{0,\rho^u}^{\rho^u,\varepsilon,x;u,\beta(u)}
\[
e^{-\lambda\rho^u}
J_\varepsilon\bigl(
\rho^u,X_{\rho^u}^{x;u,\beta(u)};
u|_{\rrbracket\rho^u,\infty\llbracket},
\gamma_{\rho^u}^{u,\delta}
   (u|_{\rrbracket\rho^u,\infty\llbracket})
\bigr)
\]  \\
&\les G_{0,\rho^u}^{\rho^u,\varepsilon,x;u,\beta(u)} \[
e^{-\lambda\rho^u}
W_\varepsilon\bigl(X_{\rho^u}^{x;u,\beta(u)}\bigr)
\]
+ \delta.
\end{aligned}
$$
 Taking the essential
supremum over $u\in\mathcal U_{0,\infty}$ gives
\bel{(ii)-1}
\esssup_{u\in\mathcal U_{0,\infty}}
J_\varepsilon(0,x;u,\beta^\delta(u))
\les
\esssup_{u\in\mathcal U_{0,\infty}}  G_{0,\boldsymbol{\rho}^{\beta}[u]}^{\boldsymbol{\rho}^{\beta}[u],\varepsilon,x;u,\beta(u)}
\[
e^{-\lambda\boldsymbol{\rho}^{\beta}[u]}
W_\varepsilon\(
X_{\boldsymbol{\rho}^{\beta}[u]}^{x;u,\beta(u)}
\)
\]
+ \delta.
\ee
Since $\beta^\delta\in\mathcal B_{0,\infty}$, by the definition of
$\cW_\varepsilon$,
$$
W_\varepsilon(x) =\cW_\e(0,x)
\les
\esssup_{u\in\mathcal U_{0,\infty}}  G_{0,\boldsymbol{\rho}^{\beta}[u]}^{\boldsymbol{\rho}^{\beta}[u],\varepsilon,x;u,\beta(u)}
\[e^{-\lambda\boldsymbol{\rho}^{\beta}[u]}
W_\varepsilon\(
X_{\boldsymbol{\rho}^{\beta}[u]}^{x;u,\beta(u)}
\)
\]
+ \delta.
$$
By the arbitrariness of  $\beta\in\mathcal B_{0,\infty}$, we further obtain
$$
W_\varepsilon(x)
\les
\essinf_{\beta\in\mathcal B_{0,\infty}}
\esssup_{u\in\mathcal U_{0,\infty}}   G_{0,\boldsymbol{\rho}^{\beta}[u]}^{\boldsymbol{\rho}^{\beta}[u],\varepsilon,x;u,\beta(u)}
\[
e^{-\lambda\boldsymbol{\rho}^{\beta}[u]}
W_\varepsilon\(
X_{\boldsymbol{\rho}^{\beta}[u]}^{x;u,\beta(u)}
\)
\]
+ \delta.
$$
Letting $\delta\downarrow0$ gives   the desired inequality in (ii).

\ss

If $\rho^u$ may take the value $\infty$, we set
$
\rho_n^u:=\rho^u\wedge n,\quad n\ge 1.
$
Then $\rho_n^u<\infty$, $\mathbb P$-a.s., and
$u\mapsto\rho_n^u$ remains a nonanticipative stopping strategy.
Applying the preceding argument to $\rho_n^u$ and letting $n\to\infty$,
the same stability estimate of the backward semigroup as in Case 2 of the
proof of (i) yields \eqref{(ii)-1} for $\rho^u$.
Using $\beta^\delta\in\mathcal B_{0,\infty}$, taking the essential infimum
over $\beta$, and then letting $\delta\downarrow0$, we obtain (ii).
\end{proof}

\subsection{Identification with the mixed control--stopping value functions}

In this subsection, we identify the viscosity solution $W$ of the HJBI
variational inequality \eqref{VI-}, obtained in Proposition
\ref{convergence-VI-HJB}, with the lower value function of the mixed-type zero-sum
SDG defined  in \eqref{LU-VF}. To this
end, we first introduce the backward semigroup associated with the original
BSDE \eqref{BSDE}, which provides a convenient tool for the proof.

\bde\label{Def-semi-2}\sl
Given any $x\in\mathbb R^n$, $\dbF$-stopping times $\rho$ and $\varsigma$
with $0\les \rho\les \varsigma$, $\mathbb P$-a.s., and
$\eta\in L^2_{\mathcal F_\varsigma}(\Omega;\mathbb R)$, for
$u\in\mathcal U_{0,\varsigma}$ and $v\in\mathcal V_{0,\varsigma}$, we define
$$
\sG_{\rho,\varsigma}^{x;u,v}\big[e^{-\lambda\varsigma}\eta\big]
:=
e^{-\lambda \rho}\widetilde Y_\rho^{x;u,v},
$$
where $(\widetilde Y^{x;u,v},\widetilde Z^{x;u,v})$ is the solution of the
following BSDE stopped at $\varsigma$:
\begin{equation}\label{y-convergence-v--eq2}\left\{\2n
\begin{aligned}
&\widetilde Y_{s\wedge\varsigma}^{x;u,v}
=
\widetilde Y_{T\wedge\varsigma}^{x;u,v}
+
\int_{s\wedge\varsigma}^{T\wedge\varsigma}
\big(
f\big(
X_r^{x;u,v}, \l\widetilde Y_r^{x;u,v},
\widetilde Z_r^{x;u,v},
u_r,v_r
\big)
-\lambda \widetilde Y_r^{x;u,v}
\big)\md r  \\
&\qq\qq
-
\int_{s\wedge\varsigma}^{T\wedge\varsigma}
\widetilde Z_r^{x;u,v}\,\md B_r,\qq \mbox{ for all }0\les s\les T<\infty, \\
&\widetilde Y_\varsigma^{x;u,v}=\eta
\quad\text{on } \{\varsigma<\infty\}.
\end{aligned}
\right.
\end{equation}
\ede

\begin{proof}[\bf{Proof of Theorem \ref{original problem value function existence}}]
We prove only the lower value case, since the upper value case can be handled similarly.

\noindent\emph{Step 1.} We first prove that $W^-(x)\le W(x)$ for all
$x\in\mathbb R^n$. Fix $x\in\mathbb R^n$, $m\ge1$, $R>|x|$, and
$\beta\in\mathcal B_{0,\infty}$. Define the mappings
$\boldsymbol{\tau}_m^\beta:\mathcal U_{0,\infty}\to\mathcal S$ and
$\boldsymbol{\tau}_R^\beta:\mathcal U_{0,\infty}\to\mathcal S$ by
$$\ba{ll}
\ns\ds
\boldsymbol{\tau}_m^\beta[u]
:=
\inf\Big\{
t\ges0:
W\big(X_t^{x;u,\beta(u)}\big)+\frac1m
\ges
\psi_1\big(X_t^{x;u,\beta(u)}\big)
\Big\},\\
\ns\ds \boldsymbol{\tau}_R^\beta[u]
:=
\inf\left\{
t\ges0:
\big|X_t^{x;u,\beta(u)}\big|\ges R
\right\},
\ea
$$with the convention $\inf\emptyset=\infty$.
Then,
for each $\theta\in\mathcal S$, define
$ \ds
\boldsymbol{\gamma}_m^{\beta,\theta}[u]
:=
\theta\wedge\boldsymbol{\tau}_m^\beta[u],
$ and $\ds
\boldsymbol{\gamma}_{m,R}^{\beta,\theta}[u]
:=
\theta\wedge\boldsymbol{\tau}_m^\beta[u]\wedge
\boldsymbol{\tau}_R^\beta[u].
$
Notice that  $\ds
\Big\{y\in\mathbb R^n: W(y)+\frac1m\ges \psi_1(y)\Big\}
$  is a  closed set.
Since $W$ and $\psi_1$ are continuous and the state process
$X^{x;u,\beta(u)}$ has continuous paths, the hitting time
$\boldsymbol{\tau}_m^\beta[u]\in\cS$ for every fixed
$u\in\mathcal U_{0,\infty}$.    Similarly,
$\boldsymbol{\tau}_R^\beta[u]\in\mathcal S$, and therefore
$\boldsymbol{\gamma}_m^{\beta,\theta}[u],
\boldsymbol{\gamma}_{m,R}^{\beta,\theta}[u]\in\mathcal S$.

Moreover,
$\boldsymbol{\tau}_m^\beta\in\mathfrak T_{0,\infty}$. Indeed, let
$\varrho\in\mathcal S$ and let $u^1,u^2\in\mathcal U_{0,\infty}$ be such that
$u^1=u^2$ on $\llbracket 0,\varrho\rrbracket $. By the nonanticipativity of $\beta$,
$
\beta(u^1)=\beta(u^2)
$ on $ \llbracket 0,\varrho\rrbracket .
$
By the pathwise uniqueness of the controlled SDE,
$$
X^{x;u^1,\beta(u^1)}
=
X^{x;u^2,\beta(u^2)}
\quad\hbox{on } \llbracket 0,\varrho\rrbracket ,
\qquad \mathbb P\hbox{-a.s.}
$$
Consequently, the hitting times determined by these two trajectories coincide
up to $\varrho$, namely
$
\boldsymbol{\tau}_m^\beta[u^1]\wedge\varrho
=
\boldsymbol{\tau}_m^\beta[u^2]\wedge\varrho,
$ $ \mathbb P\hbox{-a.s.}
$
Thus $\boldsymbol{\tau}_m^\beta\in\mathfrak T_{0,\infty}$.
By the same argument,
$\boldsymbol{\tau}_R^\beta\in\mathfrak T_{0,\infty}$. Since $\theta$ is fixed
and does not depend on $u$, taking the minimum with $\theta$ preserves the
nonanticipativity condition. Hence, for every $\theta\in\mathcal S$ and
$R>0$,
$
\boldsymbol{\gamma}_m^{\beta,\theta},
\boldsymbol{\gamma}_{m,R}^{\beta,\theta}
\in\mathfrak T_{0,\infty}.
$

Applying Proposition \ref{Lemma-DPP-nonanticipative-stopping} (i) to the family $\{\boldsymbol{\rho}^{\beta,\theta}\}_{\beta\in\mathcal B_{0,\infty},
\,\theta\in\mathcal S}$ with $ \boldsymbol{\rho}^{\beta,\theta}:=\boldsymbol{\gamma}_{m,R}^{\beta,\theta} $,
we obtain, for every $\varepsilon>0$,
$$
\begin{aligned}
W_\varepsilon(x)
&\ges
\essinf_{\beta\in\mathcal B_{0,\infty}}
\esssup_{\substack{u\in\mathcal U_{0,\infty}\\ \theta\in\mathcal S}}
G_{0,\boldsymbol{\gamma}_{m,R}^{\beta,\theta}[u]}
^{\boldsymbol{\gamma}_{m,R}^{\beta,\theta}[u],\varepsilon,x;u,\beta(u)}
\[
e^{-\lambda\boldsymbol{\gamma}_{m,R}^{\beta,\theta}[u]}
W_\varepsilon
\(
X_{\boldsymbol{\gamma}_{m,R}^{\beta,\theta}[u]}^{x;u,\beta(u)}
\)
\]   =
\essinf_{\beta\in\mathcal B_{0,\infty}}
\esssup_{\substack{u\in\mathcal U_{0,\infty}\\ \theta\in\mathcal S}}
\wt Y _{0}^{ 1, \e;u,\b(u) },
\end{aligned}
$$
where  $\wt Y ^{ 1, \e;u,v }$ satisfies  the following BSDE
%
\begin{equation*}\label{auxiliary penalized backwards semigroups-BSDE}\left\{\2n
\begin{aligned}
&\widetilde Y_{s\wedge\boldsymbol{\gamma}_{m,R}^{\beta,\theta}[u]}^{1,\varepsilon;u,\b(u)}
 =  \widetilde Y_{T\wedge\boldsymbol{\gamma}_{m,R}^{\beta,\theta}[u]}^{1,\varepsilon;u,\b(u)}
+
\int_{s\wedge\boldsymbol{\gamma}_{m,R}^{\beta,\theta}[u]}
^{T\wedge\boldsymbol{\gamma}_{m,R}^{\beta,\theta}[u]}
\Big(
f\big(
X_r^{x;u,\b(u)}, \l\widetilde Y_r^{1,\varepsilon;u,\b(u)},
\widetilde Z_r^{1,\varepsilon;u,\b(u)},
u_r,\b(u)_r
\big)
\\
&\qq\qq\q
+
Q_{\varepsilon}^{W_\varepsilon}
\big(
X_r^{x;u,\b(u)}
\big)
-
\lambda
\widetilde Y_r^{1,\varepsilon;u,\b(u)}
\Big) \md r
-
\int_{s\wedge\boldsymbol{\gamma}_{m,R}^{\beta,\theta}[u]}
^{T\wedge\boldsymbol{\gamma}_{m,R}^{\beta,\theta}[u]}
\widetilde Z_r^{1,\varepsilon;u,\b(u)}\,\md B_r,
\quad
0\les s\les T<\i, \\
&\widetilde Y_{\boldsymbol{\gamma}_{m,R}^{\beta,\theta}[u]}^{1,\varepsilon;u,\b(u)}
 = W_{\e} \(
X_{\boldsymbol{\gamma}_{m,R}^{\beta,\theta}[u]}^{x;u,\beta(u)}
\)
\quad\text{on } \big\{\boldsymbol{\gamma}_{m,R}^{\beta,\theta}[u]<\infty\big\}.
\end{aligned}\right.
\end{equation*}


Since $W_\varepsilon\to W$ locally uniformly on $\mathbb R^n$, for each fixed
$m\ges1$ and $R>0$, there exists $\varepsilon_{m,R}>0$ such that, for all
$0<\varepsilon<\varepsilon_{m,R}$,
$
\sup\limits_{|y|\les R}|W_\varepsilon(y)-W(y)|\les\frac1m .
$
For $0\les t< \boldsymbol{\gamma}_{m,R}^{\beta,\theta}[u] $, we have
$t< \boldsymbol{\tau}_m^\beta[u] $ and $t< \boldsymbol{\tau}_R^\beta[u] $.
Therefore, for $0<\varepsilon<\varepsilon_{m,R}$,
$$
 W_\varepsilon(X_t^{x;u,\beta(u)})
-
\psi_1(X_t^{x;u,\beta(u)})  \les
W_\varepsilon(X_t^{x;u,\beta(u)})
-
W(X_t^{x;u,\beta(u)})
-\frac1m
\les0,
$$
for $\md t\otimes \md \mathbb P$-a.e. on
 $ \llbracket 0,\boldsymbol{\gamma}_{m,R}^{\beta,\theta}[u]\rrbracket $.
Consequently,  $(W_\varepsilon(X_t^{x;u,\beta(u)})
-
\psi_1(X_t^{x;u,\beta(u)}))^+=0$, $\md t\otimes \md \mathbb P$-a.e. on
   $ \llbracket 0,\boldsymbol{\gamma}_{m,R}^{\beta,\theta}[u]\rrbracket  $.
Therefore, $Q_{\varepsilon}^{W_\varepsilon}(X_t^{x;u,\beta(u)})\ges 0$, $\md t\otimes \md\mathbb P$ -a.e. on
$\llbracket0,\gamma_{m,R}^{\beta,\theta}[u]\rrbracket$.
Hence, we  apply  the comparison theorem
to get
\begin{equation}\label{Inequ-1}
\ba{ll}
\ns\ds
  W_{\e}(x)
\ges   \essinf_{\beta\in\mathcal B_{0,\infty}}
\esssup_{\substack{u\in\mathcal U_{0,\infty}\\ \theta\in\mathcal S}}
\wt Y _{0}^{ 2, \e;u,\b(u) }
  =\essinf_{\beta\in\mathcal B_{0,\infty}}
\esssup_{\substack{u\in\mathcal U_{0,\infty}\\ \theta\in\mathcal S}} \sG_{0,\boldsymbol{\gamma}_{m,R}^{\beta,\theta}[u]}^{x;u,\b(u)}\[e^{-\l \boldsymbol{\gamma}_{m,R}^{\beta,\theta}[u]}
W_\e\(X_{\boldsymbol{\gamma}_{m,R}^{\beta,\theta}[u]}^{x,u,\b(u)}\)\] , \   \mathbb{P}\mbox{-a.s.},
\ea\end{equation}
where the semigroup is the one in Definition \ref{Def-semi-2} and   $\wt Y  ^{ 2, \e;u,\b(u) }$ satisfies
\begin{equation}\label{auxiliary penalized BSDE-game-3-1}
 \left\{\2n \begin{aligned}
 &\wt Y _{s\wedge\boldsymbol{\gamma}_{m,R}^{\beta,\theta}[u]}^{2,\e;u,\beta(u)}
=\wt Y _{T \wedge\boldsymbol{\gamma}_{m,R}^{\beta,\theta}[u]}^{2,\e;u,\beta(u)}
+
\int_{s\wedge\boldsymbol{\gamma}_{m,R}^{\beta,\theta}[u]} ^{T\wedge\boldsymbol{\gamma}_{m,R}^{\beta,\theta}[u]} \big(f(X_{r}^{x;u,\beta(u)}, \l\wt Y _{r}^{2,\e;u,\beta(u)}, \wt Z _{r}^{2,\e;u,\beta(u)},u_r,\beta(u)_r) \\
&\qq\qq\q  -  \lambda
\wt Y _{r}^{2,\e;u,\beta(u)} \big)\md r  -\int_{s\wedge\boldsymbol{\gamma}_{m,R}^{\beta,\theta}[u]} ^{T\wedge\boldsymbol{\gamma}_{m,R}^{\beta,\theta}[u]}  \wt Z _{r}^{2,\e;u,\beta(u)}\md B_r,\q \forall 0\les s\les T<\i ,\\
&\wt Y _{\boldsymbol{\gamma}_{m,R}^{\beta,\theta}[u]}^{2,\e;u,\beta(u)}= W_\varepsilon
\(
X_{\boldsymbol{\gamma}_{m,R}^{\beta,\theta}[u]}^{x;u,\beta(u)}
\)\quad\text{on } \big\{\boldsymbol{\gamma}_{m,R}^{\beta,\theta}[u]<\infty\big\}.
    \end{aligned}
    \right.
      \end{equation}

Then, consider the following BSDE,
 \begin{equation}\label{auxiliary penalized BSDE-game-3}
  \left\{\2n\begin{aligned}
 &\wt Y _{s\wedge\boldsymbol{\gamma}_{m,R}^{\beta,\theta}[u]}^{3;u,\beta(u)}
=\wt Y _{T\wedge\boldsymbol{\gamma}_{m,R}^{\beta,\theta}[u]}^{3;u,\beta(u)} +  \int_{s\wedge\boldsymbol{\gamma}_{m,R}^{\beta,\theta}[u]}^{T\wedge\boldsymbol{\gamma}_{m,R}^{\beta,\theta}[u]}
\big(f(X_{r}^{x;u,\beta(u)}, \l\wt Y _{r}^{3;u,\beta(u)}, \wt Z _{r}^{3;u,\beta(u)},u_r,\beta(u)_r) -  \lambda
\wt Y _{r}^{3;u,\beta(u)} \big)\md r    \\
&\qq\qq -\int_{s\wedge\boldsymbol{\gamma}_{m,R}^{\beta,\theta}[u]}^{T\wedge\boldsymbol{\gamma}_{m,R}^{\beta,\theta}[u]} \wt Z _{r}^{3;u,\beta(u)}\md B_r,  \q \forall 0\les s\les T<\i ,\\
&\wt Y _{\boldsymbol{\gamma}_{m,R}^{\beta,\theta}[u]}^{3;u,\beta(u)}=
W
\(
X_{\boldsymbol{\gamma}_{m,R}^{\beta,\theta}[u]}^{x;u,\beta(u)}
\)\quad\text{on } \big\{\boldsymbol{\gamma}_{m,R}^{\beta,\theta}[u]<\infty\big\}.
    \end{aligned}\right.
      \end{equation}
 For fixed $m$ and $R$, since the two BSDEs  \eqref{auxiliary penalized BSDE-game-3-1} and \eqref{auxiliary penalized BSDE-game-3}  have the same driver and differ only in the terminal
condition, Lemma \ref{Pro-A.3} yields
$$\ba{ll}
\ns\ds|\wt Y _{0}^{2,\e;u,v}  -\wt Y _{0}^{3;u,v}|= \Big|\sG_{0,\boldsymbol{\gamma}_{m,R}^{\beta,\theta}[u]}^{x;u,v}\[e^{-\l \boldsymbol{\gamma}_{m,R}^{\beta,\theta}[u]}W_\e\(X_{\boldsymbol{\gamma}_{m,R}^{\beta,\theta}[u]}^{x,u,v}\) \]
-
\sG_{0,\boldsymbol{\gamma}_{m,R}^{\beta,\theta}[u]}^{x;u,v}\[e^{-\l \boldsymbol{\gamma}_{m,R}^{\beta,\theta}[u]}W \(X_{\boldsymbol{\gamma}_{m,R}^{\beta,\theta}[u]}^{x,u,v}\) \]  \Big|\\
\ns\ds \les   \dbE^\dbQ\[\Big|
e^{-\lambda\boldsymbol{\gamma}_{m,R}^{\beta,\theta}[u]}W_\varepsilon
\(X_{\boldsymbol{\gamma}_{m,R}^{\beta,\theta}[u]}^{x;u,\beta(u)}\)
-
e^{-\lambda\boldsymbol{\gamma}_{m,R}^{\beta,\theta}[u]}W
\(X_{\boldsymbol{\gamma}_{m,R}^{\beta,\theta}[u]}^{x;u,\beta(u)}\)
\Big|\cd \mathbf 1_{\{\boldsymbol{\gamma}_{m,R}^{\beta,\theta}[u] <\infty\}} \]\\
\ns\ds \les
\sup_{|y|\les R}|W_\varepsilon(y)-W(y)|\to0,\q \mbox{as }\e\to0.

\ea$$
Therefore,   taking  $\e\rightarrow0$ in \eqref{Inequ-1}, we obtain
 \begin{equation}\label{W-geq-G}
W(x)  \ges  \essinf_{\beta\in\mathcal B_{0,\infty}}
\esssup_{\substack{u\in\mathcal U_{0,\infty}\\ \theta\in\mathcal S}} \sG_{0,\boldsymbol{\gamma}_{m,R}^{\beta,\theta}[u]}^{x;u,\b(u)}\[e^{-\l \boldsymbol{\gamma}_{m,R}^{\beta,\theta}[u]}
W\(X_{\boldsymbol{\gamma}_{m,R}^{\beta,\theta}[u]}^{x,u,\b(u)}\)\] , \q \mathbb{P}\mbox{-a.s.}
 \end{equation}

Next, we consider
 \begin{equation}\label{auxiliary penalized BSDE-game-3-2}
 \left\{\2n \begin{aligned}
 &\wt Y _{s\wedge \boldsymbol{\gamma}_{m}^{\beta,\theta}[u]}^{4;u,\beta(u)}
=\wt Y _{T\wedge \boldsymbol{\gamma}_{m}^{\beta,\theta}[u]}^{4;u,\beta(u)} +  \int_{s\wedge \boldsymbol{\gamma}_{m}^{\beta,\theta}[u]}
^{T\wedge\boldsymbol{\gamma}_{m}^{\beta,\theta}[u]}
\big(f(X_{r}^{x;u,\beta(u)}, \l\wt Y _{r}^{4;u,\beta(u)}, \wt Z _{r}^{4;u,\beta(u)},u_r,\beta(u)_r) -  \lambda
\wt Y _{r}^{4;u,\beta(u)} \big)\md r    \\
&\qq\qq -\int_{s\wedge \boldsymbol{\gamma}_{m}^{\beta,\theta}[u]}
^{T\wedge\boldsymbol{\gamma}_{m}^{\beta,\theta}[u]} \wt Z _{r}^{4;u,\beta(u)}\md B_r,\q  \forall 0\les s\les T<\i   ,\\
&\wt Y _{ \boldsymbol{\gamma}_{m}^{\beta,\theta}[u]}^{4;u,\beta(u)}=W \(X_{\boldsymbol{\gamma}_{m}^{\beta,\theta}[u]}^{x,u,\beta(u)}\) \quad\text{on } \big\{\boldsymbol{\gamma}_{m}^{\beta,\theta}[u]<\infty\big\}.
    \end{aligned}\right.
      \end{equation}
For BSDEs \eqref{auxiliary penalized BSDE-game-3} and \eqref{auxiliary penalized BSDE-game-3-2},  applying  Lemma
\ref{Cor-terminal-time-stability} on
$\llbracket 0,\gamma_{m,R}^{\beta,\theta}[u]\rrbracket$, there exists a probability measure
$\mathbb Q^R$, locally equivalent to $\mathbb P$, such that
$$\ba{ll}
\ns\ds
\big|\wt Y _{0}^{3;u,\beta(u)} - \wt Y _{0}^{4;u,\beta(u)}\big|=\Big|\sG_{0,\boldsymbol{\gamma}_{m,R}^{\beta,\theta}[u]}^{x;u,\b(u)}\[e^{-\l \boldsymbol{\gamma}_{m,R}^{\beta,\theta}[u]}
W\Big(X_{\boldsymbol{\gamma}_{m,R}^{\beta,\theta}[u]}^{x,u,\b(u)}\Big)\]
-\sG_{0,\boldsymbol{\gamma}_{m}^{\beta,\theta}[u]}^{x;u,\b(u)}\[e^{-\l \boldsymbol{\gamma}_{m}^{\beta,\theta}[u]}
W\Big(X_{\boldsymbol{\gamma}_{m}^{\beta,\theta}[u]}^{x,u,\b(u)}\Big)\] \Big|\\
\ns\ds \les C \mathbb E^{\mathbb Q^R}
\Big[
e^{-\lambda\boldsymbol{\gamma}_{m,R}^{\beta,\theta}[u]}\cd
\Big|
W\Big(X_{\boldsymbol{\gamma}_{m,R}^{\beta,\theta}[u]}^{x;u,\beta(u)}\Big)
-
\wt Y _{ \boldsymbol{\gamma}_{m,R}^{\beta,\theta}[u]}^{4;u,\beta(u)}
\Big|\cd \mathbf 1_{\{\boldsymbol{\gamma}_{m,R}^{\beta,\theta}[u] <\infty\}}
\Big].
\ea$$
Since
$\boldsymbol{\gamma}_{m,R}^{\beta,\theta}[u]=\boldsymbol{\gamma}_m^{\beta,\theta}[u]\wedge \boldsymbol{\tau}_R^\beta[u] $,
we have
$
W\(X_{\boldsymbol{\gamma}_{m,R}^{\beta,\theta}[u]}^{x;u,\beta(u)}\)
=
\widetilde Y_{\boldsymbol{\gamma}_{m,R}^{\beta,\theta}[u]}^{4;u,\beta(u)}
$
 on  $\{\boldsymbol{\gamma}_m^{\beta,\theta}[u]\les  \boldsymbol{\tau}_R^\beta[u]\}.
$
On the other hand, on $\{\boldsymbol{\gamma}_m^{\beta,\theta}[u]> \boldsymbol{\tau}_R^\beta[u]\}$, the boundedness of
$W$ and $\widetilde Y^{4;u,\beta(u)}$ yields
$$
 \mathbb E^{\mathbb Q^R}
\[e^{- \lambda\boldsymbol{\gamma}_{m,R}^{\beta,\theta}[u]}
\Big|
W\Big(X_{\boldsymbol{\gamma}_{m,R}^{\beta,\theta}[u]}^{x;u,\beta(u)}\Big)
-
\widetilde Y_{\boldsymbol{\gamma}_{m,R}^{\beta,\theta}[u]}^{4;u,\beta(u)}
\Big| \]
\les C\mathbb E^{\mathbb Q^R}  \[
e^{-\lambda \boldsymbol{\tau}_R^\beta[u]}
{\bf 1}_{\{\boldsymbol{\tau} _R^\beta[u]<\boldsymbol{\gamma}_{m}^{\beta,\theta}[u]\}}
\]   .
$$
Moreover, for any $T>0$,
\begin{equation}\label{ineq-1}
\begin{aligned}
&\mathbb E^{\mathbb Q^R}  \[
e^{-\lambda \boldsymbol{\tau} _R^\beta[u]}
{\bf 1}_{\{\boldsymbol{\tau} _R^\beta[u]<\boldsymbol{\gamma}_{m}^{\beta,\theta}[u]\}}
\]=\mathbb E^{\mathbb Q^R} \[
e^{-\lambda \boldsymbol{\tau}_R^\beta[u]}
{\bf 1}_{\{A\cap\{\boldsymbol{\tau}_R^\beta[u]>T\}\}}
+
e^{-\lambda \boldsymbol{\tau}_R^\beta[u]}
{\bf 1}_{\{A\cap\{\boldsymbol{\tau}_R^\beta[u]\les T\}\}}
\]\\
&\les
e^{-\lambda T}
+
\mathbb Q^R(\boldsymbol{\tau} _R^\beta[u]\les T) \les
e^{-\lambda T}
+
\mathbb Q^R\(
\sup_{0\les s\les T}
\big|X_s^{x;u,\beta(u)}\big|
\ges R
\)   \\
& \les
e^{-\lambda T}
+
\frac{1}{R^2}
\mathbb E^{\mathbb Q^R} \[
\sup_{0\les s\les T}
\big|X_s^{x;u,\beta(u)}\big|^2
\]\les
e^{-\lambda T}
+
\frac{C_T(1+|x|^2)}{R^2}.
\end{aligned}
\end{equation}
The last inequality follows from the uniform estimate
$$
\sup_{u,\beta,R}
\mathbb E^{\mathbb Q^R}
\[
\sup_{0\les s\les T}
|X_s^{x;u,\beta(u)}|^2
\]
\les C_T(1+|x|^2).
$$
Indeed, by Remark \ref{rem:measure-representation}, although $\mathbb Q^R$ may depend on
$R$ and on $(u,\beta,\theta)$, its Girsanov kernel $\zeta^R$ satisfies
$
|\zeta_r^R|\les L_z, $ $ r\ges0.
$
Hence, under $\mathbb Q^R$,
$\ds
B_r^R:=B_r-\int_0^r\zeta_\ell^R\,\md\ell
$  is a Brownian motion on each finite
interval, and the state equation becomes
$$
\md X_s^{x;u,\beta(u)}
=
\[
b\big(X_s^{x;u,\beta(u)},u_s,\beta(u)_s\big)
+
\sigma\big(X_s^{x;u,\beta(u)},u_s,\beta(u)_s\big)\zeta_s^R
\]\md s
+
\sigma\big(X_s^{x;u,\beta(u)},u_s,\beta(u)_s\big)\md B_s^R.
$$
The boundedness of $\zeta^R$ preserves the linear growth condition uniformly in
$u,\beta$ and $R$. The standard finite horizon SDE estimate then gives the
above estimate, with $C_T$ independent of $u,\beta$ and $R$.

Letting first $R\to\infty$ and then $T\to\infty$ in \eqref{ineq-1}, we obtain
$$
\sup_{u,\beta,\theta}
\mathbb E^{\dbQ^R}\left[
e^{-\lambda \tau_R^\beta[u]}
{\bf 1}_{\{\tau_R^\beta[u]<\boldsymbol{\gamma}_{m}^{\beta,\theta}[u]\}}
\right]
\longrightarrow 0.
$$
Therefore, as $R\to\infty$,
$$
\sup_{u,\beta,\theta}
\Big|\sG_{0,\boldsymbol{\gamma}_{m,R}^{\beta,\theta}[u]}^{x;u,\beta(u)}
\Big[
e^{-\lambda\boldsymbol{\gamma}_{m,R}^{\beta,\theta}[u]}
W\Big(X_{\boldsymbol{\gamma}_{m,R}^{\beta,\theta}[u]}^{x;u,\beta(u)}\Big)
\Big]
-
\sG_{0,\boldsymbol{\gamma} _m^{\beta,\theta}[u]}^{x;u,\beta(u)}
\Big[
e^{-\lambda\boldsymbol{\gamma}_m^{\beta,\theta}[u]}
W\Big(X_{\boldsymbol{\gamma}_m^{\beta,\theta}[u]}^{x;u,\beta(u)}\Big)
\Big]\Big|\to0.
$$

 Passing to the limit in \eqref{W-geq-G}, we get
 \begin{equation}\label{5.45}
W(x)  \ges  \essinf_{\beta\in\mathcal B_{0,\infty}}
\esssup_{\substack{u\in\mathcal U_{0,\infty}\\ \theta\in\mathcal S}} \sG_{0,\boldsymbol{\gamma}_{m}^{\beta,\theta}[u]}^{x;u,\b(u)}\[e^{-\l \boldsymbol{\gamma}_{m}^{\beta,\theta}[u]}
W\(X_{\boldsymbol{\gamma}_{m}^{\beta,\theta}[u]}^{x,u,\b(u)}\)\] , \q \mathbb{P}\mbox{-a.s.}
 \end{equation}
Since $\psi_2\les W\les \psi_1$, by the definition
$\boldsymbol{\gamma}_m^{\beta,\theta}[u]
=
\theta\wedge\boldsymbol{\tau}_m^\beta[u]$, we have
$$
\begin{aligned}
&e^{-\lambda\boldsymbol{\gamma}_m^{\beta,\theta}[u]}
W\(
X_{\boldsymbol{\gamma}_m^{\beta,\theta}[u]}^{x;u,\beta(u)}
\)
  =
e^{-\lambda\theta}
W\(
X_\theta^{x;u,\beta(u)}
\)
\mathbf 1_{\{\theta<\boldsymbol{\tau}_m^\beta[u]\}}
+
e^{-\lambda\boldsymbol{\tau}_m^\beta[u]}
W\(
X_{\boldsymbol{\tau}_m^\beta[u]}^{x;u,\beta(u)}
\)
\mathbf 1_{\{\theta\ges\boldsymbol{\tau}_m^\beta[u]\}}       \\
&\ges
e^{-\lambda\theta}
\psi_2\big(
X_\theta^{x;u,\beta(u)}
\big)
\mathbf 1_{\{\theta<\boldsymbol{\tau}_m^\beta[u]\}}  +
e^{-\lambda\boldsymbol{\tau}_m^\beta[u]}
\(
\psi_1\big(
X_{\boldsymbol{\tau}_m^\beta[u]}^{x;u,\beta(u)}
\big)
-\frac1m
\)
\mathbf 1_{\{\theta\ges\boldsymbol{\tau}_m^\beta[u]\}} \\
& =   e^{-\l\boldsymbol{\gamma}_m^{\beta,\theta}[u] }  Y_{\boldsymbol{\gamma}_m^{\beta,\theta}[u] }^{x;u , \theta,\b(u), \boldsymbol{\tau}_m^\beta[u]}
-e^{-\l \boldsymbol{\tau}_m^\beta[u]  }   \frac 1m  {\bf 1}_{\{ \th\ges \boldsymbol{\tau}_m^\beta[u] \}} , \q \mathbb{P}\mbox{-a.s.},
\end{aligned}
$$
where  $Y ^{x;u , \theta,\b(u), \boldsymbol{\tau}_m^\beta[u]} $ is the first part of  the solution of the original BSDE  \eqref{BSDE}.
Then using   Lemma  \ref{Pro-A.3}, we get
\begin{equation*}\label{ }
  \ba{ll}
  \ns\ds
W(x)  \ges \essinf_{\beta\in\mathcal B_{0,\infty}}
\esssup_{\substack{u\in\mathcal U_{0,\infty}\\ \theta\in\mathcal S}}
 \sG_{0,\boldsymbol{\gamma}_m^{\beta,\theta}[u]}^{x;u,\b(u)}   \[ e^{-\l\boldsymbol{\gamma}_m^{\beta,\theta}[u] }  Y_{\boldsymbol{\gamma}_m^{\beta,\theta}[u] }^{x;u , \theta,\b(u), \boldsymbol{\tau}_m^\beta[u]}
-e^{-\l \boldsymbol{\tau}_m^\beta[u]  }   \frac 1m  {\bf 1}_{\{ \th\ges \boldsymbol{\tau}_m^\beta[u] \}} \] \\
 \ns\ds\ges\essinf_{\beta\in\mathcal B_{0,\infty}}
\esssup_{\substack{u\in\mathcal U_{0,\infty}\\ \theta\in\mathcal S}}
 \sG_{0,\boldsymbol{\gamma}_m^{\beta,\theta}[u]}^{x;u,\b(u)}  \[  e^{-\l\boldsymbol{\gamma}_m^{\beta,\theta}[u] }  Y_{\boldsymbol{\gamma}_m^{\beta,\theta}[u] }^{x;u , \theta, \b(u), \boldsymbol{\tau}_m^\beta[u]} \]
 -\dbE\[\Big|e^{-\l(\boldsymbol{\gamma}_m^{\beta,\theta}[u]+ \boldsymbol{\tau}_m^\beta[u] ) }   \frac 1m  {\bf 1}_{\{ \th\ges \boldsymbol{\tau}_m^\beta[u]\} }\Big|\]\\
 \ns\ds\ges\essinf_{\beta\in\mathcal B_{0,\infty}}
\esssup_{\substack{u\in\mathcal U_{0,\infty}\\ \theta\in\mathcal S}}
  Y_{0}^{x;u , \theta, \b(u), \boldsymbol{\tau}_m^\beta[u]}-\frac Cm \ges  \essinf_{\substack{\beta\in\mathcal B_{0,\infty}\\
\boldsymbol{\tau}\in\mathfrak T_{0,\infty}}}
\esssup_{\substack{u\in\mathcal U_{0,\infty}\\ \theta\in\mathcal S}}
 Y_{0}^{x;u , \theta, \b(u), \boldsymbol{\tau}[u]}  -\frac Cm \\
  \ns\ds =W^-(x)-\frac Cm , \q \mathbb{P}\mbox{-a.s.},
 \ea\end{equation*}
where we note $\boldsymbol{\tau}_m^\beta\in\mathfrak T_{0,\infty}$ for every
$\beta\in\mathcal B_{0,\infty}$ and  the definition of the lower
value function $
W^-(x)
 $ in \eqref{LU-VF}.
Letting   $m\to\infty$, we obtain
$
W^-(x)\les W(x),$ $ x\in\mathbb R^n.
$

 \ms

\no\emph{Step 2.}   $W^-(x)\ges W(x),$ $ x\in\mathbb R^n$.
We only indicate the argument, since it is parallel to the proof of
Step 1, with Proposition   \ref{Lemma-DPP-nonanticipative-stopping} (ii)   in place of Proposition \ref{Lemma-DPP-nonanticipative-stopping} (i).

Fix arbitrarily $\beta\in\mathcal B_{0,\infty}$ and
$\boldsymbol{\tau}\in\mathcal T_{0,\infty}$. For $m\ges1$ and $R>0$, define
$$
\boldsymbol{\vartheta}_m^\beta[u]
:=
\inf\Big\{
t\ges0:
W\bigl(X_t^{x;u,\beta(u)}\bigr)-\frac1m
\les
\psi_2\bigl(X_t^{x;u,\beta(u)}\bigr)
\Big\},
$$
and
$ \ds
\boldsymbol{\rho}_m^{\beta, \boldsymbol{\tau}}[u]
:=
\boldsymbol{\vartheta}_m^\beta[u]\wedge \boldsymbol{\tau}[u],
$ $\ds
\boldsymbol{\rho}_{m,R}^{\beta, \boldsymbol{\tau}}[u]
:=
\boldsymbol{\vartheta}_m^\beta[u]\wedge \boldsymbol{\tau}[u]\wedge \boldsymbol{\tau}_R^\beta[u].
$
As before, we have $\boldsymbol{\vartheta}_m^\beta ,
\boldsymbol{\rho}_m^{\beta,\boldsymbol{\tau}},
\boldsymbol{\rho}_{m,R}^{\beta,\boldsymbol{\tau}}
\in\mathfrak T_{0,\infty}.
$

Applying Proposition  \ref{Lemma-DPP-nonanticipative-stopping} (ii) to the family
$
\boldsymbol{\rho}^{\beta}[u]
:=
\boldsymbol{\rho}_{m,R}^{\beta,\boldsymbol{\tau}}[u]
$
and then arguing exactly as in the proof of \eqref{Inequ-1}--\eqref{5.45},
we obtain, after letting $\varepsilon\downarrow0$ and then
$R\to\infty$,
$$
W(x)
\les
\essinf_{\beta\in\mathcal B_{0,\infty}}
\esssup_{u\in\mathcal U_{0,\infty}}
G_{0,\boldsymbol{\rho}_m^{\beta,\tau}[u]}^{x;u,\beta(u)}
\[
e^{-\lambda\boldsymbol{\rho}_m^{\beta,\tau}[u]}
W\left(
X_{\boldsymbol{\rho}_m^{\beta,\tau}[u]}^{x;u,\beta(u)}
\right)
\],
\qquad \mathbb P\hbox{-a.s.}
$$
Since
$
\boldsymbol{\rho}_m^{\beta,\tau}[u]
=
\boldsymbol{\vartheta}_m^\beta[u]\wedge \boldsymbol{\tau}[u],
$
and since $\psi_2\les W\les \psi_1$, we have
$$
\begin{aligned}
&e^{-\lambda\boldsymbol{\rho}_m^{\beta,\boldsymbol{\tau}[u]}}
W\(
X_{\boldsymbol{\rho}_m^{\beta,\boldsymbol{\tau}[u]}}^{x;u,\beta(u)}
\)  \les
e^{-\lambda\boldsymbol{\rho}_m^{\beta,\boldsymbol{\tau}[u]}}
Y_{\boldsymbol{\rho}_m^{\beta,\boldsymbol{\tau}[u]}}^{
x;u,\boldsymbol{\vartheta}_m^\beta[u],\beta(u),\boldsymbol{\tau}[u]}
+
e^{-\lambda\boldsymbol{\vartheta}_m^\beta[u]}\frac1m
{\bf 1}_{\{\boldsymbol{\vartheta}_m^\beta[u]<\boldsymbol{\tau}[u]\}},
\qquad \mathbb P\hbox{-a.s.}
\end{aligned}
$$
By the monotonicity and the Lipschitz property of the backward semigroup,
it follows that
$$
W(x)
\les
\essinf_{\beta\in\mathcal B_{0,\infty}}
\esssup_{u\in\mathcal U_{0,\infty}}
Y_0^{x;u,\boldsymbol{\vartheta}_m^\beta[u],\beta(u),\boldsymbol{\tau}[u]}
+\frac{C}{m},
\qquad \mathbb P\hbox{-a.s.}
$$
Since $\boldsymbol{\vartheta}_m^\beta[u]\in\mathcal S$ for every
$u\in\mathcal U_{0,\infty}$, we further get
$$
W(x)
 \les
\essinf_{\beta\in\mathcal B_{0,\infty}}
\esssup_{\substack{u\in\mathcal U_{0,\infty}\\ \theta\in\mathcal S}}
J\bigl(x;u,\theta;\beta(u),\boldsymbol{\tau}[u]\bigr)
+\frac{C}{m},
\qquad \mathbb P\hbox{-a.s.}
$$
Finally, since $\boldsymbol{\tau}\in\mathcal T_{0,\infty}$ was arbitrary, taking the
essential infimum over $\boldsymbol{\tau}\in\mathcal T_{0,\infty}$ gives
$ \ds
W(x)
\les
W^-(x)+\frac{C}{m}.
$
Letting $m\to\infty$, we obtain
$
W(x)\les W^-(x),$ $ x\in\mathbb R^n.
$
Together with the already proved inequality $W^- \les W $, this proves
$W^-(x)=W(x)$ for all $x\in\mathbb R^n$.
\end{proof}

\begin{remark}\label{Remark-nonanticipative-stopping-strategy}\sl
The mixed control--stopping SDG considered in this paper differs from the
mixed control--stopping problems and games studied in
\cite{Kamizono-Morimoto-2002,Morimoto-2003,Ghosh-Rao-2003,
Ghosh-Rao-Sheetal-2009,Ghosh-Rao-2012,Akdim-Ouknine-Turpin-2006,Hu-2020}.
In our formulation, nonanticipative stopping strategies are introduced; see
\eqref{LU-VF}. We emphasize that this is not merely a notational convenience.

Indeed, as shown in the proof of Theorem
\ref{original problem value function existence}, the intermediate stopping
times are not fixed stopping times chosen independently of the controls. For
example, the hitting times
$
\boldsymbol{\tau}_m^\beta[u]
$
and
$
\boldsymbol{\tau}_R^\beta[u],
$
and consequently
$
\boldsymbol{\gamma}_m^{\beta,\theta}[u]
$
and
$
\boldsymbol{\gamma}_{m,R}^{\beta,\theta}[u],
$
are defined through the controlled state process
$X^{x;u,\beta(u)}$. Hence they depend not only on the control $u$, but also on
the opponent's response $\beta(u)$.

This control dependence is essential in the dynamic programming argument. When
two controls coincide up to a stopping time, the nonanticipativity of the
response strategy ensures that the corresponding opponent controls also
coincide up to that time. Consequently, the controlled trajectories and the
hitting times generated by them are compatible under restriction and pasting.
This property is needed to make the stopping-time construction consistent with
the order of optimization in the lower value function.

In the above-mentioned works, stopping times are used in combined
control--stopping or mixed control--stopping formulations. However, the specific
issue that an intermediate stopping time may depend on the opponent's control
through the response strategy is not explicitly treated as a structural part of
the dynamic programming argument. The present formulation makes this dependence
explicit through nonanticipative stopping strategies, which is crucial for the
identification of the viscosity solution with the lower value function.
\end{remark}

\begin{corollary}\label{value function existence}\sl
Assume the Isaacs minimax condition holds, namely, $$
H^+(x,y,p,X)=H^-(x,y,p,X),\qquad (x,y,p,X)\in
\mathbb R^n\times\mathbb R\times\mathbb R^n\times\mathbb S^n .
$$
Then the mixed-type stochastic recursive  differential game  has a value, that is, $W^+\equiv W^-$.
 Moreover, the two HJBI variational
inequalities \eqref{VI+} and \eqref{VI-} coincide, and the  common
unique bounded viscosity solution in $C_b(\mathbb R^n)$ is the value function of
the mixed-type SDG.

\end{corollary}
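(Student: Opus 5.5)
The corollary follows almost directly from Theorem \ref{original problem value function existence}, combined with the uniqueness statements of Section~3. The plan has three steps.

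\emph{Step 1: the two variational inequalities coincide.} Under the Isaacs condition $H^+=H^-$ on $\mathbb R^n\times\mathbb R\times\mathbb R^n\times\mathbb S^n$, the left-hand sides of \eqref{VI-} and \eqref{VI+} are literally the same nonlinear operator. Hence Definition~\ref{VI-viscosity solution} and its upper analogue define the same class of viscosity sub- and supersolutions and of viscosity solutions.

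\emph{Step 2: a single solution $W$.} By Theorem~\ref{HJBI-VI-solution} and its upper counterpart, each of \eqref{VI-} and \eqref{VI+} has a unique viscosity solution in $C_b(\mathbb R^n)$, given by $\widetilde W^-$ and $\widetilde W^+$ respectively. Since the equations coincide, uniqueness (equivalently, the comparison principle applied in both directions) gives $\widetilde W^-=\widetilde W^+=:W$. This is the common unique bounded viscosity solution, and it satisfies $\psi_2\les W\les\psi_1$.

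\emph{Step 3: identification with the game.} Assuming {\bf(C1)}--{\bf(C5)} are in force, as they are for the mixed game, Theorem~\ref{original problem value function existence} identifies $W^-$ with the unique viscosity solution of \eqref{VI-} and $W^+$ with that of \eqref{VI+}. By Step 2 both are $W$, so $W^-\equiv W^+\equiv W$. The mixed-type SDG therefore has a value, and that value is $W$.

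There is no genuine obstacle, since all the analytic work lives in Theorem~\ref{original problem value function existence}. The only point needing care is to state explicitly that the standing assumptions {\bf(C1)}--{\bf(C5)} are used, so that the theorem applies to both value functions. One should also note that the upper case of that theorem, which was asserted by symmetry, is being invoked.
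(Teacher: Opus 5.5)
Your proposal is correct and is exactly the argument the paper has in mind: the paper states the corollary without a separate proof, as an immediate consequence of Theorem~\ref{original problem value function existence} (applied to both $W^-$ and $W^+$) together with the uniqueness in $C_b(\mathbb R^n)$ from Theorem~\ref{HJBI-VI-solution} and its upper analogue, once the Isaacs condition makes \eqref{VI-} and \eqref{VI+} the same inequality. You also rightly note that {\bf(C1)}--{\bf(C5)} must be assumed, since the corollary's statement omits them, and that the upper case of the theorem, asserted there by symmetry, is being invoked.
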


\section{Conclusions}

In this paper, we have studied stationary fully nonlinear HJBI-type variational
inequalities with bilateral constraints associated with BSDE-generated
recursive payoffs, and established two stochastic representations in an
infinite-horizon two-player zero-sum framework. The first is an augmented-game
representation. By adding two stopping symbols to the control spaces, the
obstacle terms are encoded in the augmented driver, and the problem is reduced
to recursive Isaacs equations without explicit obstacle constraints. The second
is a mixed control--stopping representation. In this formulation, the two
obstacles arise directly from the stopping decisions of the two players, and
the payoff is defined through a BSDE with a random terminal time.

The formulation contains several classical models as special cases,
including optimal stopping problems and combined stochastic control--stopping
problems. However, the simultaneous presence of two controls, two stopping
decisions and BSDE-generated recursive payoffs leads to a genuinely
game-theoretic bilateral obstacle problem. This structure requires
control-dependent nonanticipative stopping strategies and a dynamic programming
argument adapted to the interaction between continuous controls and stopping
decisions.

The two representations play complementary roles. The augmented formulation is
useful from an analytic and numerical viewpoint, since it connects the obstacle
problem with recursive Isaacs equations. The mixed control--stopping formulation
provides a direct game-theoretic interpretation of the bilateral constraints
and is more suitable for applications involving stopping payoffs, such as
cancellation, default, liquidation and early exercise.

A natural continuation is to develop a direct representation through
infinite-horizon reflected BSDEs with two barriers, where the two obstacles are
imposed on the backward component itself. Such an approach is closely related
to the penalization method used here, but it would require a separate analysis
of well-posedness, stability and dynamic programming for reflected BSDEs with
game features. We leave this direction for future research.

\end{document}